\newtheorem{theorem}{Theorem}[section]
\newtheorem{proposition}[theorem]{Proposition}
\newtheorem{lemma}[theorem]{Lemma}
\newtheorem{corollary}[theorem]{Corollary}
\theoremstyle{definition}
\newtheorem{definition}[theorem]{Definition}
\newtheorem{hypothesis}[theorem]{Hypothesis}
\theoremstyle{remark}
\newtheorem{example}[theorem]{Example}
\newtheorem{remark}[theorem]{Remark}
\numberwithin{equation}{section}
\title[Attainability in Wasserstein spaces]{Attainability property for a probabilistic target in Wasserstein spaces}
\author[G. Cavagnari]{Giulia Cavagnari}
\address{\hspace{-0.5em}\begin{tabular}{ll}Giulia Cavagnari:&Politecnico di Milano,\\& Dipartimento di Matematica ``F. Brioschi''\\& 
Piazza Leonardo da Vinci 32, \\& I-20133 Milano, Italy.\end{tabular}}
\email{giulia.cavagnari@polimi.it}
\author[A. Marigonda]{Antonio Marigonda}
\address{\hspace{-0.5em}\begin{tabular}{ll}Antonio Marigonda:&Department of Computer Science,\\& University of Verona\\ &Strada Le Grazie 15, I-37134 Verona, Italy.\end{tabular}}
\email{antonio.marigonda@univr.it}
\date{\today}
\keywords{optimal transport, differential inclusions, time optimal control}
\subjclass[2010]{34A60, 49J15} 
\begin{document}

\begin{abstract}
In this paper we establish an attainability result for the minimum time function
of a control problem in the space of probability measures endowed with Wasserstein distance.
The dynamics is provided by a suitable controlled continuity equation, 
where we impose a nonlocal nonholonomic constraint on the driving vector field, 
which is assumed to be a Borel selection of a given set-valued map.
This model can be used to describe at a macroscopic level a so-called \emph{multiagent system}
made of several possible interacting agents.
\end{abstract}

\maketitle

\section{Introduction}\label{sec:intro}
We consider a finite-dimensional \emph{multiagent system}, i.e., a system in $\mathbb R^d$ where the number of agents is so large that only a \emph{macroscopic}
description is available. As usual in this framework, in order to describe the behaviour of the system at a certain time $t$, we introduce a Borel positive measure $\mu_t$ on $\mathbb R^d$
whose meaning is the following: given a Borel set $A\subseteq \mathbb R^d$ the quantity
$\dfrac{\mu_t(A)}{\mu_t(\mathbb R^d)}$ represents the fraction of the total number of agents that are present in $A$ at the time $t$.
We will assume that the system is \emph{isolated}, thus the total number of agents remains constant in time. Hence, by normalizing the measure $\mu_t$,
we can always assume $\mu_t(\mathbb R^d)=1$, i.e., $\mu_t$ is a probability measure for all $t$.
\par\medskip\par
The macroscopic evolution of the system is thus given by a curve $t\mapsto\mu_t$ in the space of probability measures.
Due to the mass-preserving character of the evolution, we can assume that such an evolution is governed by the \emph{continuity equation}
\[\partial_t\mu_t+\mathrm{div}(v_t\mu_t)=0,\]
to be satisfied in a distributional sense, where $v_t$ is a suitable time-depending Borel vector field describing the macroscopic mass flux during the evolution.
\par\medskip\par
It can be easily proved, see e.g. \cite{CM-LSSC17}, that for a.e. $t$ and $\mu_t$-a.e. $x\in\mathbb R^d$ the vector field $v_t(x)$ can be constructed as a weigthed average of the velocities of the agents passing through 
the point $x$ at time $t$, where the weights are given by the fraction of the mass carried by each agent w.r.t.
the total amount of mass flowing through $x$ at time $t$.
In particular, possibly nonlocal nonholonomic constraints on the agents' motion will reflect into constraints for the possible choices of $v_t$.
\par\medskip\par
In this paper we consider a situation where each agent is constrained to follow the trajectories of a differential inclusion with a \emph{nonlocal} dependence
on the overall configuration of the agents. This fact models the possible \emph{nonlocal interaction} among the agents.
Examples of such interactions are quite commmon in the models of pedestrian dynamics, flocks of animals and social dynamics in general.
\par\medskip\par
Due to the potential applications, the literature on control of multi-agent systems is growing quite fast in the recent years. 
Among the most recent contributions, we mention \cite{CFPT}, where the authors investigate a controllability problem for a leader-follower model in a finite-dimensional setting
and their aim is to achieve an alignment consensus for a mass of indistinguishable agents when the action of an external policy maker 
is \emph{sparse}, i.e. concentrated on few individuals. In \cite{FPR} it is provided a mean-field formulation of the same model through Gamma-convergence techniques. 
\par
The relevance of such kind of results is enhanced when dealing with problems involving a considerable number of individuals, 
in order to circumvent the bounds coming from the curse of dimensionality: indeed, the mean-field limit can be used as a realistic approximation
when the number of agents is huge. Results in this direction are provided for example by \cite{FLOS} or the preprint paper \cite{CLOS}, 
where the authors study a Gamma-convergence result for an optimal control problem of a $N$-particles system subject to a nonlocal dynamics 
when $N\to +\infty$.
\par
Controllability conditions in the space of probability measures are also analyzed in the preprints \cite{DMR1}, \cite{DMR2}. 
In particular, the aim of the authors is to provide sufficient conditions in order to steer an initial configuration of agents into a 
desired final one, by acting through a control term on the vector field, under the constraint that the action can be implemented only in 
a certain fixed space region.
\par
Also the extension of classical \emph{viability theory} to multi-agent systems is attracting an increasing interest in the community.
Similarly to the finite-dimensional framework, a subset $\mathscr K$ of probability measures is said to be viable for a controlled dynamics if it is possible to 
keep the evolution confined inside $\mathscr K$ by acting with an admissible control when starting with a initial state in $\mathscr K$.
We refer to \cite{Aver} for first results in this direction based on a geometric approach (tangent cones to $\mathscr K$) and to the preprint \cite{CMQ} for a viscous-type approach to the problem.
\par
It is worth pointing out that a key feature of all these studies, and many others available in the literature, is the combined use of tools, concepts, and techniques from optimal transport theory, measure theory,
and from optimal control theory.
\par\medskip\par
In this paper, we deal with a time-optimal control problem. More precisely, given a target set of desired final configurations, we are interested in the minimum time needed 
to steer the agents to it starting from an initial distribution, and respecting the nonholonomic constraints. In particular, in our measure-theoretic setting, the target set is given in duality 
with the space $C^0_b(\mathbb R^d)$ of continuous and bounded functions as follows. Given a family of observables $\Phi\subseteq C^0_b(\mathbb R^d)$, the target set is defined as 
(see Definition \ref{def:gentar})
\begin{equation}\label{eq:gentar2}\tilde S_p^\Phi:=\left\{\mu\in\mathscr P_p(\mathbb R^d):\,\int_{\mathbb R^d}\varphi(x)\,d\mu(x)\le 0\textrm{ for all }\varphi\in\Phi\right\},\end{equation}
where $(\mathscr P_p(\mathbb R^d),W_p)$, with $p\ge1$, is the $p$-Wasserstein space of probability measures endowed with the metric $W_p$ (see Definition \ref{def:Wpspace}).
Section \ref{sec:gentarget} is entirely devoted to the analysis of topological properties of this class of generalized target sets.
In Section \ref{sec:admtraj}, we study the set of admissible trajectories $\mathcal A^p_I(\mu)$ defined on a time interval $I\subset\mathbb R$ and starting from a given initial 
datum $\mu\in\mathscr P_p(\mathbb R^d)$, i.e., those absolutely continuous curves in $\mathscr P_p(\mathbb R^d)$ whose driving velocity field satisfies the nonlocal nonholonomic 
constraint given by a set-valued map $F:\mathscr P_p(\mathbb R^d)\times\mathbb R^d\rightrightarrows \mathbb R^d$
\begin{align*}\partial_t\mu_t+\mathrm{div}(v_t\mu_t)=0,&&v_t(x)\in F(\mu_t,x)\textrm{ for a.e. }t\in I\textrm{ and $\mu_t$-a.e. }x\in\mathbb R^d.\end{align*}
This equation represents the controlled dynamics of the system.
The results obtained, expecially Theorem \ref{thm:filippov} providing Filippov-Gronwall type estimates, are then used in Section \ref{sec:mintime} where we study the 
main object of the paper, i.e., the \emph{minimum time function} defined as
\[\tilde T_p(\mu):=\inf\{T\ge 0:\, \textrm{ there exists }\boldsymbol\mu=\{\mu_t\}_{t\in[0,T]}\in\mathcal A^p_{[0,T]}(\mu)\textrm{ s.t. }\mu_T\in\tilde S_p^\Phi\}.\]
More precisely, we first prove the existence of optimal trajectories, lower semicontinuity of the minimum time function and a Dynamic Programming Principle as in the classical case.
\par\medskip\par
In the case without interactions, by passing to the limit in the Dynamic Programming Principle,
in \cite{CMNP} the authors proved that the minimum time function solves in a suitable viscosity sense an Hamilton-Jacobi-Bellman equation in the spaces of measures provided that it is \emph{continuous} (not just l.s.c.),
and further development on this theory have been recently done in \cite{MQ,JMQ}. We refer the reader to \cite{AGa,GT2019,GS} for an introduction to Hamilton-Jacobi
equations in Wasserstein spaces.
\par\medskip\par
The aim of this paper is to provide a sufficient condition for the continuity of the minimum time function in this framework, i.e., sufficient conditions granting Small Time Local Attainability (STLA) in the sense of \cite{KQ}.
Indeed, assuming STLA, in Proposition \ref{prop:STLAcontT} we get the continuity of the minimum time function thanks to the Filippov estimate proved in Theorem \ref{thm:filippov} and the Dynamic Programming Principle.
Sufficient conditions for STLA are finally provided in the main Theorem \ref{thm:suffSTLA}, assuming geometric properties of the generalized target set $\tilde S_p^\Phi$ together with a sort of gradient-descent behavior of the associated family of observables $\Phi$ when integrated along admissible trajectories (see Definition \ref{def:Sattain}). This condition represents a weakening of the well-known Petrov's condition in the classical framework.
\par\medskip\par
The paper is structured as follows: in Section \ref{sec:notation} we fix the notation and review some basic results about measure theory, 
optimal transport and set-valued analysis, in Section \ref{sec:admtraj} we prove some basic properties of the admissible trajectories in the space of measures,
in Section \ref{sec:gentarget} we discuss some geometric properties of the target sets. In Section \ref{sec:mintime} we state our main result concerning
the continuity of the minimum time function, and finally in Section \ref{sec:final} we compare our sufficient condition for STLA with the finite-dimensional one of \cite{KQ}.

\section{Preliminaries and notation}\label{sec:notation}

In this section we review some concepts from measure theory, optimal transport and set-valued analysis.
Our main references for this part are \cite{AGS}, \cite{AuF}, and \cite{Vil}.\par\medskip\par

We will use the following notation.\par\medskip\par
{\small\begin{longtable}{ll}
$B(x,r)$&the open ball of center $x\in X$ and radius $r$ of a normed space $X$,\\ &i.e., $B(x,r):=\{y\in X:\,\|y-x\|_X<r\}$;\\
$\overline K$&the closure of a subset $K$ of a topological space $X$;\\
$d_K(\cdot)$&the distance function from a subset $K$ of a metric space $(X,d)$,\\ &i.e. $d_K(x):=\inf\{d(x,y):\,y\in K\}$;\\
$C^0_b(X;Y)$&the set of continuous bounded function from a Banach space $X$ to $Y$, \\
&endowed with $\|f\|_{\infty}=\displaystyle\sup_{x\in X}\|f(x)\|_Y$ (if $Y=\mathbb R$, $Y$ will be omitted);\\
$C^0_c(X;Y)$&the set of compactly supported functions of $C^0_b(X;Y)$, \\
&with the topology induced by $C^0_b(X;Y)$;\\
$\Gamma_I$&the set of continuous curves from a real interval $I$ to $\mathbb R^d$;\\
$\Gamma_T$&the set of continuous curves from $[0,T]$ to $\mathbb R^d$;\\
$AC([0,T])$&the set of absolutely continuous curves from $[0,T]$ to $\mathbb R^d$;\\
$e_t$&the evaluation operator $e_t:\mathbb R^d\times\Gamma_I\to\mathbb R^d$\\ &defined by $e_t(x,\gamma)=\gamma(t)$ for all $t\in I$;\\
$\mathscr P(X)$&the set of Borel probability measures on a Banach space $X$,\\
&endowed with the weak$^*$ topology induced by $C^0_b(X)$;\\
$\mathscr M(\mathbb R^d;\mathbb R^d)$&the set of vector-valued Borel measures on $\mathbb R^d$ with values in $\mathbb R^d$,\\
&endowed with the weak$^*$ topology induced by $C^0_c(\mathbb R^d;\mathbb R^d)$;\\
$|\nu|$&the total variation of a measure $\nu\in \mathscr M(\mathbb R^d;\mathbb R^d)$;\\
$\ll$&the absolutely continuity relation between measures defined on the same\\
&$\sigma$-algebra;\\
$\mathrm{m}_p(\mu)$&the $p$-moment of a probability measure $\mu\in \mathscr P(X)$;\\
$r\sharp\mu$&the push-forward of the measure $\mu$ by the Borel map $r$;\\
$\mu\otimes\eta_x$&the product measure of $\mu\in\mathscr P(X)$ with the Borel family of measures\\ &$\{\eta_x\}_{x\in X}$;\\
$\mathrm{pr}_i$&the $i$-th projection map $\mathrm{pr}_i(x_1,\dots,x_N)=x_i$;\\
$\Pi(\mu,\nu)$&the set of admissible transport plans from $\mu$ to $\nu$;\\
$\Pi_o(\mu,\nu)$&the set of optimal transport plans from $\mu$ to $\nu$;\\
$W_p(\mu,\nu)$&the $p$-Wasserstein distance between $\mu$ and $\nu$;\\
$\mathscr P_p(X)$&the subset of the elements $\mathscr P(X)$ with finite $p$-moment, \\
&endowed with the $p$-Wasserstein distance;\\
$\mathscr L^d$&the Lebesgue measure on $\mathbb R^d$;\\
$\dfrac{\nu}{\mu}$&the Radon-Nikodym derivative of the measure $\nu$ w.r.t. the measure $\mu$;\\
$\mathrm{Lip}(f)$&the Lipschitz constant of a function $f$.\\
$\mathcal A^p_I(\mu)$&the set of admissible trajectories defined in \eqref{eq:admset}.\\
$\Upsilon_F(\mu,\boldsymbol\theta)$&the set defined in Definition \ref{def:bas}.\\
$K_F$&the quantity defined in Hypothesis \ref{HP} by $\displaystyle K_F:=\max_{v\in F(\delta_0,0)}\{|v|\}$.
\end{longtable}}

In this section we give some preliminaries and fix the notation. Our main reference for this part is \cite{AGS}.

\begin{definition}[Space of probability measures]
Given Banach spaces $X,Y$, we denote by $\mathscr P(X)$ the set of Borel probability measures on $X$ endowed with the weak$^*$ topology
induced by the duality with the Banach space $C^0_b(X)$ of the real-valued continuous bounded functions on $X$ with the uniform convergence norm.
For any $p\ge 1$, the $p$-moment of $\mu\in\mathscr P(X)$ is defined by $\displaystyle\mathrm{m}_p(\mu)=\int_{X}\|x\|_X^p\,d\mu(x)$, and
we set $\mathscr P_p(X)=\{\mu\in\mathscr P(X):\, \mathrm{m}_p(\mu)<+\infty\}$.
For any Borel map $r:X\to Y$ and $\mu\in\mathscr P(X)$, we define the \emph{push forward measure} $r\sharp\mu\in\mathscr P(Y)$
by setting $r\sharp\mu(B)=\mu(r^{-1}(B))$ for any Borel set $B$ of $Y$.
\end{definition}

\begin{definition}[Total variation]
Let $X,Y$ be Banach spaces, and denote by $\mathscr M(X;Y)$ the set of $Y$-valued Borel measures defined on $X$.
The total variation measure of $\nu\in\mathscr M(X;Y)$ is defined for every Borel set $B\subseteq X$ as
\[
|\nu|(B):=\sup_{\{B_i\}_{i\in\mathbb N}}\left\{\sum \|\nu(B_i)\|_Y\right\}
\]
where the sup ranges on the set of countable collections $\{B_i\}_{i\in\mathbb N}$ of pairwise disjoint Borel sets 
such that $\displaystyle\bigcup_{i\in\mathbb N}B_i=B$.
\end{definition}

For the following result see \cite[Theorem 5.3.1]{AGS}.

\begin{theorem}[Disintegration]
Given a measure $\mu\in\mathscr P(\mathbb X)$ and a Borel map $r:\mathbb X\to X$, there exists a family of probability
measures $\{\mu_x\}_{x\in X}\subseteq \mathscr P(\mathbb X)$,
uniquely defined for $r\sharp \mu$-a.e. $x\in X$, such that $\mu_x(\mathbb X\setminus r^{-1}(x))=0$ for $r\sharp \mu$-a.e. $x\in X$, and
for any Borel map $\varphi:\mathbb X\to [0,+\infty]$ we have
\[\int_{\mathbb X}\varphi(z)\,d\mu(z)=\int_X \left[\int_{r^{-1}(x)}\varphi(z)\,d\mu_x(z)\right]d(r\sharp \mu)(x).\]
We will write $\mu=(r\sharp \mu)\otimes \mu_x$.
If $\mathbb X=X\times Y$ and $r^{-1}(x)\subseteq\{x\}\times Y$ for all $x\in X$, we can identify each measure $\mu_x\in\mathscr P(X\times Y)$ with a measure on $Y$.
\end{theorem}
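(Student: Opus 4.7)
The plan is to follow the classical Rokhlin/Dieudonné construction, building the family $\{\mu_x\}$ through a Radon--Nikodym argument applied fiberwise. Setting $\nu := r\sharp\mu$, I would first observe that for any bounded Borel function $f : \mathbb{X} \to [0,+\infty)$, the push-forward $r\sharp(f\mu)$ is a finite measure on $X$ absolutely continuous with respect to $\nu$ (indeed, any $\nu$-null set pulls back to a $\mu$-null set, and hence to an $(f\mu)$-null set). The Radon--Nikodym theorem then yields a Borel density $h_f : X \to [0,+\infty)$ with $r\sharp(f\mu) = h_f \, \nu$.

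The second step is to promote the assignment $f \mapsto h_f(x)$ into a measure. For each $x$ in the conull set where the densities are well-defined, the map $f \mapsto h_f(x)$ is linear, positive, and satisfies $h_1(x) = 1$; on a countable $\mathbb{Q}$-vector subspace generated by a suitable dense family of continuous bounded functions, one can choose a common conull set $N \subseteq X$ on which all required identities hold. Because $\mathbb{X}$ is (a Borel subset of) a Banach space, we may work inside a Polish setting and invoke Riesz representation to obtain, for each $x \notin N$, a Borel probability measure $\mu_x$ on $\mathbb{X}$ with $\int_{\mathbb{X}} f \, d\mu_x = h_f(x)$; the Borel measurability of $x \mapsto \mu_x(B)$ follows by a monotone class argument starting from continuous $f$.

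The third step is to verify the three conclusions. Uniqueness follows immediately: if $\{\mu_x'\}$ is another such family, testing the integral identity against $f = \mathbf{1}_B$ for $B$ in a countable generating algebra forces $\mu_x = \mu_x'$ outside a $\nu$-null set. The concentration property $\mu_x(\mathbb{X} \setminus r^{-1}(x))=0$ is obtained by applying the defining identity to $\varphi(z) = g(r(z))\mathbf{1}_A(z)$, showing $\int g(x)\mu_x(A \setminus r^{-1}(x))\,d\nu(x) = 0$ for every bounded Borel $g$ and every Borel $A$; taking $A$ ranging over a countable generating family and exploiting the Polish structure of the fibers yields the claim. Finally, the integration formula is first verified for indicators (this is precisely the Radon--Nikodym identity), then extended to nonnegative Borel $\varphi$ by linearity and monotone convergence.

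The principal obstacle is the measurable selection step that makes $x \mapsto \mu_x$ into a genuine Borel family of probability measures rather than a pointwise a.e.\ defined object: one must synchronize the Radon--Nikodym choices over an uncountable collection of test functions. This is handled by restricting first to a countable dense $\mathbb{Q}$-algebra of continuous bounded functions, doing Radon--Nikodym there on a single conull set, and then using Riesz representation and density to extend to a full Borel family; the Polish/Radon structure of the underlying Banach space is essential here, as it guarantees tightness and the existence of the representing measures $\mu_x$ on the fibers.
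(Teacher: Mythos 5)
The paper does not prove this theorem: it is stated as a preliminary and attributed directly to \cite[Theorem 5.3.1]{AGS}, so there is no internal argument to compare with. Your sketch is the standard Rokhlin/Dellacherie--Meyer construction underlying that citation (fiberwise Radon--Nikodym densities over a countable algebra of test functions, Riesz/Daniell representation to produce $\mu_x$, concentration and uniqueness via a countable generating family plus monotone convergence), and it is essentially correct; the one step you compress --- obtaining countably additive representing measures on the non-compact $\mathbb X$ from the positive functionals, which requires tightness of $\mu$ together with a compactification or Daniell--Stone argument on a single conull set --- is exactly the difficulty you flag, and it is handled in the standard way.
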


\begin{definition}[Transport plans and Wasserstein distance]\label{def:Wpspace}
Let $X$ be a complete separable Banach space, $\mu_1,\mu_2\in\mathscr P(X)$. We define the set of \emph{admissible transport plans} between $\mu_1$ and $\mu_2$ by setting
\[\Pi(\mu_1,\mu_2)=\{\boldsymbol{\pi}\in\mathscr P(X\times X): \mathrm{pr}_i\sharp\boldsymbol\pi=\mu_i,\,i=1,2\},\]
where for $i=1,2$, we defined $\mathrm{pr}_i:\mathbb R^d\times\mathbb R^d\to\mathbb R^d$ by $\mathrm{pr}_i(x_1,x_2)=x_i$.
The \emph{inverse} $\boldsymbol\pi^{-1}$ of a transport plan $\boldsymbol\pi\in \Pi(\mu,\nu)$ is defined by $\boldsymbol\pi^{-1}=i\sharp \boldsymbol\pi\in \Pi(\nu,\mu)$,
where $i(x,y)=(y,x)$ for all $x,y\in X$.
The \emph{$p$-Wasserstein distance} between $\mu_1$ and $\mu_2$ is
\[W_p^p(\mu_1,\mu_2)=\inf_{\boldsymbol\pi\in\Pi(\mu_1,\mu_2)}\int_{X\times X}|x_1-x_2|^p\,d\boldsymbol\pi(x_1,x_2).\]
If $\mu_1,\mu_2\in\mathscr P_p(X)$ then the above infimum is actually a minimum, and we define
\[\Pi^p_o(\mu_1,\mu_2)=\left\{\boldsymbol\pi\in\Pi(\mu_1,\mu_2): W_p^p(\mu_1,\mu_2)= \int_{X\times X}|x_1-x_2|^p\,d\boldsymbol\pi(x_1,x_2)\right\}.\]
The space $\mathscr P_p(X)$ endowed with the $W_p$-Wasserstein distance is a complete separable metric space, moreover
for all $\mu\in \mathscr P_p(X)$ there exists a sequence $\{\mu^N\}_{N\in\mathbb N}\subseteq\mathrm{co}\{\delta_x:\,x\in\mathrm{supp}\,\mu\}$ such that $W_p(\mu^N,\mu)\to 0$ as $N\to +\infty$.
\end{definition}

\begin{remark}
Recalling formula (5.2.12) in \cite{AGS}, we have 
\[W_p(\delta_0,\mu)=m_{p}^{1/p}(\mu)=\left(\int_{\mathbb R^d}|x|^p\,d\mu(x)\right)^{1/p}\]
for all $\mu\in \mathscr P_p(\mathbb R^d)$. In particular, if $t\mapsto \mu_t$ is $W_p$-continuous, then $t\mapsto \mathrm{m}_p^{1/p}(\mu_t)$ is continuous.
\end{remark}

\begin{definition}[Set-valued maps]
Let $X,Y$ be sets. A \emph{set-valued} map $F$ from $X$ to $Y$ is a map associating to each $x\in X$ a (possible empty) subset $F(x)$ of $Y$. We will write $F:X\rightrightarrows Y$ to denote a set-valued map from $X$ to $Y$.
The \emph{graph} of a set-valued map $F$ is 
\[\mathrm{graph}\,F:=\{(x,y)\in X\times Y:\,y\in F(x)\}\subseteq X\times Y,\]
while the \emph{domain} of $F$ is $\mathrm{dom}\,F:=\{x\in X:\, F(x)\ne\emptyset\}\subseteq X$.
A \emph{selection} of $F$ is a map $f:\mathrm{dom}\,F\to Y$ such that $f(x)\in F(x)$ for all $x\in\mathrm{dom}\,F$.
When $X,Y$ are topological spaces, we say that
\begin{itemize}
\item $F$ has \emph{closed images} if $F(x)$ is closed in $Y$ for every $x\in X$, 
\item $F$ has \emph{closed graph} if $\mathrm{graph}\,F$ is closed in $X\times Y$,
\item $F$ is \emph{compact valued} (or that it has \emph{compact images}) if $F(x)$ is compact for every $x\in X$,
\item $F$ is \emph{upper semicontinuous} at $x\in X$ if for every open set $V\subseteq Y$ such that $V\supseteq F(x)$
there exists an open neighborhood $U\subseteq X$ of $x$ such that $F(z)\subseteq V$ for all $z\in U$. 
\item $F$ is \emph{lower semicontinuous} at $x\in X$ if for every open set $V\subseteq Y$ such that $V\cap F(x)\ne \emptyset$ there exists an open neighborhood $U\subseteq X$ of $x$ such that $F(z)\cap V\ne\emptyset$ for all $z\in U$. 
\item $F$ is continuous at $x\in X$ if it is both lower and upper semicontinuous at $x$. 
\item $F$ will be called continuous (resp. lower semicontinuous, upper semicontinuous) if it is continuous (resp. lower semicontinuous, upper semicontinuous) at every $x\in X$.
\end{itemize}
When $Y$ is a vector space, $F$ is convex valued (or it has \emph{convex images}) if $F(x)$ is convex for every $x\in X$.
When $X,Y$ are measurable spaces, we say that $F$ is measurable if $\mathrm{graph}\,F$ is measurable in $X\times Y$ endowed with the product of $\sigma$-algebrae on $X$ and $Y$.
When $(X,d)$ is a metric space and $Y$ is a normed space, given $L>0$ we say that $F$ is Lipschitz continuous with constant $L$ if for all $x_1,x_2\in X$
\[F(x_2)\subseteq F(x_1)+L\cdot d(x_1,x_2)\overline{B_Y(0,1)},\]
where the sum and the product of sets are in the Minkowski sense: $A+B=\{a+b:\,a\in A,\,b\in B\}$ and $\lambda A=\{\lambda a:\,a\in A\}$ for every $A,B\subseteq Y$, $\lambda\in\mathbb R$.
\end{definition}
 
\section{Admissible trajectories}\label{sec:admtraj}
Given a collection $\boldsymbol\mu=\{\mu_h\}_{h\in I}\subseteq\mathscr P(X)$ of Borel measures on the measure space $X$ indexed by a parameter $h\in I$, by a slight abuse of notation we will denote by $\boldsymbol\mu$ both the set $\boldsymbol\mu=\{\mu_h\}_{h\in I}\subseteq\mathscr P(X)$ and the function $h\mapsto \mu_h$. In each occurrence, the context will clarify what we are referring to.

\begin{definition}[Admissible trajectories]
Let $I=[a,b]$ be a compact real interval, $\boldsymbol\mu=\{\mu_t\}_{t\in I}\subseteq \mathscr P_p(\mathbb R^d)$, $\boldsymbol\nu=\{\nu_t\}_{t\in I}\subseteq \mathscr M(\mathbb R^d;\mathbb R^d)$,
$F:\mathscr P_p(\mathbb R^d)\times\mathbb R^d \rightrightarrows \mathbb R^d$ be a set-valued map.\par
We say that $\boldsymbol\mu$ is an \emph{admissible trajectory driven by} $\boldsymbol\nu$ defined on $I$ with underlying dynamics $F$ if
\begin{itemize}
\item $|\nu_t|\ll \mu_t$ for a.e. $t\in I$;
\item $v_t(x):=\dfrac{\nu_t}{\mu_t}(x)\in F(\mu_t,x)$ for a.e. $t\in I$ and $\mu_t$-a.e. $x\in \mathbb R^d$;
\item $\partial_t\mu_t+\mathrm{div}\,\nu_t=0$ in the sense of distributions on $[0,T]\times\mathbb R^d$, equivalently 
\[\dfrac{d}{dt}\int_{\mathbb R^d}\varphi(x)\,d\mu_t(x)=\int_{\mathbb R^d}\langle\nabla\varphi(x),v_t(x)\rangle\,d\mu_t(x),\]
for a.e. $t\in [0,T]$ and all $\varphi\in C^1_c(\mathbb R^d)$.
\end{itemize}
Define $\mathcal A^p_I:\mathscr P_p(\mathbb R^d)\rightrightarrows C^0([0,T];\mathscr P_p(\mathbb R^d))$ by
\begin{equation}\label{eq:admset}
\mathcal A^p_I(\mu):=\left\{\boldsymbol\mu=\{\mu_t\}_{t\in I}:\, \begin{array}{l}\boldsymbol\mu\textrm{ is an admissible trajectory}\\ \textrm{with }\mu_a=\mu\end{array}\right\}.
\end{equation}
When $I$ and $p$ are clear by the context, we will omit them.
\end{definition}

\begin{definition}[Concatenation, restriction, extension]
Let $I_i=[a_i,b_i]$, $\boldsymbol\mu^{(i)}=\{\mu^{(i)}_t\}_{t\in I_i}\in\mathcal A^p_{I_i}(\mu^{(i)}_{a_i})$, $i=1,2$, be satisfying $\mu^{(1)}_{b_1}=\mu^{(2)}_{a_2}$. We define $I_3=[a_1,b_1+b_2-a_2]$ and
$\boldsymbol\mu^{(3)}=\{\mu^{(3)}_t\}_{t\in I_3}$ by setting $\mu^{(3)}_t=\mu^{(1)}_t$ for $t\in I_1$ and 
$\mu^{(3)}_{t}=\mu^{(2)}_{t+a_2-b_1}$ for $t\in I_3\setminus I_1$. The curve $\boldsymbol\mu^{(3)}$
will be called the \emph{concatenation} of $\boldsymbol\mu^{(1)}$ and $\boldsymbol\mu^{(2)}$ and will be denoted by
$\boldsymbol\mu^{(3)}=\boldsymbol\mu^{(1)}\odot\boldsymbol\mu^{(2)}$. By \cite[Lemma 4.4]{DNS}, we have 
$\boldsymbol\mu^{(3)}\in\mathcal A_{I_3}^p(\mu^{(1)}_{a_1})$.
\par\medskip\par
Let $I=[a,b]$, $J=[a',b']$ with $J\subseteq I$, and $\boldsymbol\mu=\{\mu_t\}_{t\in I}\in\mathcal A^p_{I}(\mu_a)$.
The \emph{restriction} $\boldsymbol\mu_{|J}=\{\hat\mu_t\}_{t\in J}$ of $\boldsymbol\mu$ to $J$ is defined by
taking $\hat\mu_t=\mu_t$ for all $t\in J$ and we have $\boldsymbol\mu_{|J}\in\mathcal A^p_{J}(\mu_{a'})$.
\par\medskip\par
Let $\boldsymbol\mu^{(i)}\in\mathcal A^p_{I_i}(\mu^{(i)})$, $i=1,2$. We say that $\boldsymbol\mu^{(2)}$
is an \emph{extension} of $\boldsymbol\mu^{(1)}$ if $I_2\supseteq I_1$ and $\boldsymbol\mu^{(2)}_{|I_1}=\boldsymbol\mu^{(1)}$.
\end{definition}

Throughout the paper, we will assume the following
\begin{hypothesis}\label{HP}
The set-valued map $F:\mathscr P_p(\mathbb R^d)\times\mathbb R^d\to \mathbb R^d$ has nonempty, convex and compact images, moreover 
it is Lipschitz continuous with constant $L>0$ with respect to the metric 
\[d_{\mathscr P_p(\mathbb R^d)\times\mathbb R^d}((\mu_1,x_1),(\mu_2,x_2))=W_p(\mu_1,\mu_2)+|x_1-x_2|\] 
on $\mathscr P_p(\mathbb R^d)\times\mathbb R^d$. We set $\displaystyle K_F:=\max_{v\in F(\delta_0,0)}\{|v|\}$.
\end{hypothesis}

\begin{definition}[Definition of $\Upsilon_F$]\label{def:bas}
Assume Hypothesis \ref{HP} for $F$. Let $\boldsymbol\theta=\{\theta_t\}_{t\in [0,T]}$ be a $W_p$-continuous curve in $\mathscr P_p(\mathbb R^d)$, $\mu\in\mathscr P_p(\mathbb R^d)$.
Denote by $\Upsilon_F(\mu,\boldsymbol\theta)$ the set of $\boldsymbol\mu=\{\mu_t\}_{t\in [0,T]}\subseteq\mathscr P_p(\mathbb R^d)$ satisfying
the following property: there exists $\boldsymbol\eta\in\mathscr P(\mathbb R^d\times\Gamma_T)$ such that
\begin{itemize}
\item $\mu_t=e_t\sharp\boldsymbol\eta$ for all $t\in [0,T]$, $\mu_0=\mu$;
\item for $\boldsymbol\eta$-a.e. $(x,\gamma)\in\mathbb R^d\times\Gamma_T$ and a.e. $t\in[0,T]$ it holds $\gamma\in AC([0,T])$, $\gamma(0)=x$, $\dot\gamma(t)\in F(\theta_t,\gamma(t))$ .
\end{itemize}
We set $\displaystyle M_{\boldsymbol\theta}:=\sup_{\tau\in[0,T]}\mathrm{m}_p^{1/p}(\theta_\tau)$ and
\[\Xi(\mu,\boldsymbol\theta):=\{\boldsymbol\eta\in\mathscr P(\mathbb R^d\times\Gamma_T):\, \{e_t\sharp\boldsymbol\eta\}_{t\in[0,T]}\in \Upsilon_F(\mu,\boldsymbol\theta)\}.\]
On the set $X:=\mathscr P_p(\mathbb R^d)\times C^0([0,T];\mathscr P_p(\mathbb R^d))$ we define the metric
\[d_{X}\left(\left(\mu^{(1)},\boldsymbol{\theta^{(1)}}\right),\left(\mu^{(2)},\boldsymbol{\theta^{(2)}}\right)\right)
=W_p\left(\mu^{(1)},\mu^{(2)}\right)+\sup_{t\in[0,T]}W_p\left(\theta^{(1)}_t,\theta^{(2)}_t\right),\]
where $\boldsymbol{\theta^{(i)}}=\{\theta^{(i)}_t\}_{t\in[0,T]}$, $i=1,2$.\par
Finally, we define the set-valued map $S^{\boldsymbol\theta}:\mathbb R^d\rightrightarrows \Gamma_T$ by setting for all $y\in\mathbb R^d$
\begin{align*}
S^{\boldsymbol\theta}(y):=&\{\xi\in AC([0,T]):\, \dot\xi\in F(\theta_t,\xi(t))\textrm{ for a.e. }t\in[0,T],\,\xi(0)=y\}.
\end{align*}
Since $t\mapsto F(\theta_t,x)$ is continuous for all $x\in\mathbb R^d$ and $x\mapsto F(\theta_t,x)$ is Lipschitz continuous by Hypothesis \ref{HP}, with constant $L$ 
for all $t\in[0,T]$, the set-valued map $S^{\boldsymbol\theta}(\cdot)$ is Lipschitz continuous by  \cite[Corollary 10.4.2]{AuF}.
\end{definition}

\begin{lemma}[Estimates on the moments]\label{lemma:moment}
Assume Hypothesis \ref{HP} for $F$.
Let $\boldsymbol\theta=\{\theta_t\}_{t\in [0,T]}$ be a $W_p$-continuous curve in $\mathscr P_p(\mathbb R^d)$, $\mu\in\mathscr P_p(\mathbb R^d)$. Let $\Xi(\mu,\boldsymbol\theta)$, $\Upsilon_F(\mu,\boldsymbol\theta)$ and $M_{\boldsymbol\theta}$ be as in Definition \ref{def:bas}, and let $\boldsymbol \eta\in \Xi(\mu,\boldsymbol\theta)$ and $\boldsymbol\mu=\{\mu_t\}_{t\in [0,T]}\in \Upsilon_F(\mu,\boldsymbol\theta)$ such that $\mu_t=e_t\sharp\boldsymbol\eta$ for all $t\in [0,T]$.
Then for all $t,s\in[0,T]$
\begin{align*}
\mathrm{m}_p^{1/p}(\mu_t)\le&e^{LT}\left(\mathrm{m}_p^{1/p}(\mu)+K_FT+LTM_{\boldsymbol\theta}\right),\\
W_p(\mu_t,\mu_s)\le&\left(K_F+LM_{\boldsymbol\theta}+Le^{LT}\left(\mathrm{m}_p^{1/p}(\mu)+K_FT+LTM_{\boldsymbol\theta}\right)\right)\cdot|t-s|,\\ 
\int_{\mathbb R^d\times\Gamma_T}\|\dot\gamma\|^p_{L^{\infty}([0,1])}&\,d\boldsymbol\eta(x,\gamma)\le \left[K_F+L\left(e^{LT}\left(\mathrm{m}_p^{1/p}(\mu)+K_FT+LTM_{\boldsymbol\theta}\right)+M_{\boldsymbol\theta}\right)\right]^p.
\end{align*}
\end{lemma}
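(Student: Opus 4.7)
The plan is to exploit the superposition principle representation $\mu_t=e_t\sharp\boldsymbol\eta$ to reduce each estimate to a statement about the random absolutely continuous curve $\gamma$, and then invoke the Lipschitz bound on $F$ together with Gronwall's lemma and Minkowski's integral inequality to pass from pointwise bounds in $(x,\gamma)$ to $L^p(\boldsymbol\eta)$ bounds.

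\textbf{Pointwise velocity bound.} For $\boldsymbol\eta$-a.e.\ $(x,\gamma)$ and a.e.\ $t\in[0,T]$ one has $\dot\gamma(t)\in F(\theta_t,\gamma(t))$. Hypothesis \ref{HP} gives
\[F(\theta_t,\gamma(t))\subseteq F(\delta_0,0)+L\bigl(W_p(\theta_t,\delta_0)+|\gamma(t)|\bigr)\overline{B(0,1)},\]
so $|\dot\gamma(t)|\le K_F+L\,\mathrm{m}_p^{1/p}(\theta_t)+L|\gamma(t)|\le K_F+LM_{\boldsymbol\theta}+L|\gamma(t)|$. This is the key linear estimate driving everything else.

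\textbf{First estimate (moment bound).} Set $f(t):=\mathrm{m}_p^{1/p}(\mu_t)=\|\gamma(t)\|_{L^p(\boldsymbol\eta)}$. From $\gamma(t)=\gamma(0)+\int_0^t\dot\gamma(s)\,ds$, Minkowski's integral inequality in $L^p(\boldsymbol\eta)$ yields
\[f(t)\le f(0)+\int_0^t\|\dot\gamma(s)\|_{L^p(\boldsymbol\eta)}\,ds\le \mathrm{m}_p^{1/p}(\mu)+t(K_F+LM_{\boldsymbol\theta})+L\int_0^t f(s)\,ds,\]
and Gronwall delivers $f(t)\le e^{LT}(\mathrm{m}_p^{1/p}(\mu)+K_FT+LTM_{\boldsymbol\theta})$.

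\textbf{Second estimate (Lipschitz in time).} The map $(x,\gamma)\mapsto(\gamma(t),\gamma(s))$ pushes $\boldsymbol\eta$ forward to a plan in $\Pi(\mu_t,\mu_s)$, hence
\[W_p^p(\mu_t,\mu_s)\le\int|\gamma(t)-\gamma(s)|^p\,d\boldsymbol\eta\le\int\Bigl|\int_s^t|\dot\gamma(r)|\,dr\Bigr|^p d\boldsymbol\eta.\]
Applying Minkowski's integral inequality once more and using the previous pointwise control on $|\dot\gamma(r)|$ together with the already-established bound on $\|\gamma(r)\|_{L^p(\boldsymbol\eta)}=f(r)$, one gets
\[W_p(\mu_t,\mu_s)\le\int_s^t\|\dot\gamma(r)\|_{L^p(\boldsymbol\eta)}\,dr\le|t-s|\bigl(K_F+LM_{\boldsymbol\theta}+Le^{LT}(\mathrm{m}_p^{1/p}(\mu)+K_FT+LTM_{\boldsymbol\theta})\bigr),\]
which is precisely the claim.

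\textbf{Third estimate ($L^\infty$-in-time velocity).} Here I will work pointwise in $(x,\gamma)$ first. Applying Gronwall directly to the scalar inequality $|\gamma(r)|\le|\gamma(0)|+r(K_F+LM_{\boldsymbol\theta})+L\int_0^r|\gamma(u)|\,du$ yields $|\gamma(r)|\le e^{LT}(|\gamma(0)|+TK_F+LTM_{\boldsymbol\theta})$ uniformly in $r\in[0,T]$. Re-inserting into the pointwise velocity bound gives
\[\|\dot\gamma\|_{L^\infty([0,T])}\le K_F+L\bigl(M_{\boldsymbol\theta}+e^{LT}(|\gamma(0)|+K_FT+LTM_{\boldsymbol\theta})\bigr)\]
for $\boldsymbol\eta$-a.e.\ $(x,\gamma)$. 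Taking the $L^p(\boldsymbol\eta)$ norm (which turns $|\gamma(0)|$ into $\mathrm{m}_p^{1/p}(\mu)$ via Minkowski) and raising to the $p$-th power produces the stated inequality.

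\textbf{Anticipated obstacle.} None of the steps is genuinely subtle; the only bookkeeping care needed is to decide, for each estimate, whether to Gronwall pointwise in $(x,\gamma)$ and then take $L^p(\boldsymbol\eta)$, or to first pass to $L^p(\boldsymbol\eta)$ via Minkowski and Gronwall in the resulting scalar function $f(t)$. Both routes work because the velocity estimate is affine in $|\gamma|$, so the two operations commute up to the same constants; I will use whichever is more convenient for each of the three inequalities, as indicated above.
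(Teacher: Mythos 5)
Your proof is correct and follows essentially the same route as the paper: the Lipschitz bound of Hypothesis \ref{HP} gives the affine estimate $|\dot\gamma(t)|\le K_F+LM_{\boldsymbol\theta}+L|\gamma(t)|$ along the curves on which $\boldsymbol\eta$ is concentrated, Gr\"onwall controls $|\gamma(t)|$, and the three bounds are transferred to $\mu_t=e_t\sharp\boldsymbol\eta$ by taking $L^p_{\boldsymbol\eta}$ norms (with $(e_t,e_s)\sharp\boldsymbol\eta$ as a competitor plan for $W_p(\mu_t,\mu_s)$). The only difference is the order in which you apply Minkowski and Gr\"onwall in the first estimate, which, as you note, yields the same constants.
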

\begin{proof}
Set $\boldsymbol\mu=\{\mu_t\}_{t\in[0,T]}\in\Upsilon_F(\mu,\boldsymbol\theta)$.
For $\boldsymbol\eta$-a.e. $(x,\gamma)\in\mathbb R^d\times\Gamma_T$ and a.e. $t\in[0,T]$ we have
\[\dot \gamma(t)\in F(\theta_t,\gamma(t))\subseteq F(\delta_0,0)+L(|\gamma(t)|+\mathrm{m}_p^{1/p}(\theta_t))\overline{B(0,1)}\] 
Thus for all $s,t\in [0,T]$ and a.e. $\tau\in[0,T]$
\begin{align*}
|\dot\gamma(\tau)|\le &K_F+L(|\gamma(\tau)|+\mathrm{m}_p^{1/p}(\theta_\tau))\le K_F+LM_{\boldsymbol\theta}+L|\gamma(\tau)|,\\
|\gamma(t)|-|\gamma(0)|\le&\int_0^t|\dot\gamma(\tau)|\,d\tau\le (K_F+LM_{\boldsymbol\theta})T+L\int_0^t|\gamma(\tau)|\,d\tau,\\
|\gamma(t)-\gamma(s)|\le&\left|\int_s^t|\dot\gamma(\tau)|\,d\tau\right|\le (K_F+LM_{\boldsymbol\theta})|t-s|+L\left|\int_s^t|\gamma(\tau)|\,d\tau\right|.
\end{align*}
By Gr\"onwall lemma, this implies for all $0\le s\le t\in [0,T]$ 
\begin{align}\nonumber
|\gamma(t)|\le&e^{Lt}\left(|\gamma(0)|+(K_F+LM_{\boldsymbol\theta})T\right),\\ \label{eq:Lip}
\|\dot\gamma\|_{L^{\infty}([0,1])}\le &K_F+L\left(M_{\boldsymbol\theta}+e^{LT}\left(|\gamma(0)|+(K_F+LM_{\boldsymbol\theta})T\right)\right),\\ \nonumber
|\gamma(t)-\gamma(s)|\le& \left(K_F+LM_{\boldsymbol\theta}+Le^{LT}\left(|\gamma(0)|+(K_F+LM_{\boldsymbol\theta})T\right)\right)\cdot |t-s|,
\end{align}
recalling that $\dot\gamma(s)\in F(\mu_s,\gamma(s))$ for a.e. $s$.\par
We conclude by taking the $L^p_{\boldsymbol\eta}$ norm of the above inequalities and using the triangular inequality.
\end{proof}

\begin{proposition}[Upper semicontinuity of the solution map]\label{prop:uscsol}
Set $X:=\mathscr P_p(\mathbb R^d)\times C^0([0,T];\mathscr P_p(\mathbb R^d))$.\par
The set-valued map $\Upsilon_F:X\rightrightarrows C^0([0,T];\mathscr P_p(\mathbb R^d))$, defined in Definition \ref{def:bas}, is upper semicontinuous with compact nonempty images.
\end{proposition}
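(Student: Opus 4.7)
The plan is to lift to the path-space representation: rather than working directly with curves $\boldsymbol\mu\in\Upsilon_F(\mu,\boldsymbol\theta)$, I will operate on their superpositions $\boldsymbol\eta\in\Xi(\mu,\boldsymbol\theta)$ and recover the time-marginals via $e_t\sharp\boldsymbol\eta$. Since $C^0([0,T];\mathscr P_p(\mathbb R^d))$ endowed with the sup-$W_p$ metric is a complete separable metric space, upper semicontinuity with compact nonempty images can be established by the sequential criterion: given $(\mu^{(n)},\boldsymbol\theta^{(n)})\to(\mu,\boldsymbol\theta)$ in $X$ and an arbitrary $\boldsymbol\mu^{(n)}\in\Upsilon_F(\mu^{(n)},\boldsymbol\theta^{(n)})$, I will extract a subsequence converging in $C^0([0,T];\mathscr P_p(\mathbb R^d))$ to some $\boldsymbol\mu\in\Upsilon_F(\mu,\boldsymbol\theta)$. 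Specializing to a constant sequence $(\mu^{(n)},\boldsymbol\theta^{(n)})\equiv(\mu,\boldsymbol\theta)$ yields compactness of the fibers, while nonemptiness is handled separately by a measurable selection.

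For the latter, fix $(\mu,\boldsymbol\theta)$: the set-valued map $S^{\boldsymbol\theta}:\mathbb R^d\rightrightarrows\Gamma_T$ of Definition \ref{def:bas} has nonempty closed values and is Lipschitz, so a Kuratowski--Ryll-Nardzewski measurable selection yields a Borel map $\sigma:\mathbb R^d\to\Gamma_T$ with $\sigma(x)\in S^{\boldsymbol\theta}(x)$; then $\boldsymbol\eta:=(\mathrm{id}_{\mathbb R^d},\sigma)\sharp\mu\in\Xi(\mu,\boldsymbol\theta)$, hence $\{e_t\sharp\boldsymbol\eta\}_{t\in[0,T]}\in\Upsilon_F(\mu,\boldsymbol\theta)$.

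For the general sequence, choose representatives $\boldsymbol\eta^{(n)}\in\Xi(\mu^{(n)},\boldsymbol\theta^{(n)})$. Convergence of $(\mu^{(n)},\boldsymbol\theta^{(n)})$ makes $\sup_n\mathrm{m}_p(\mu^{(n)})$ and $\sup_n M_{\boldsymbol\theta^{(n)}}$ finite, so Lemma \ref{lemma:moment} gives a uniform Lipschitz constant $\Lambda$ for $\boldsymbol\eta^{(n)}$-a.e. trajectory $\gamma$, together with a uniform $p$-moment bound on $e_t\sharp\boldsymbol\eta^{(n)}$ over $t$ and $n$. The $\mathbb R^d$-marginals $\mu^{(n)}$ are tight because they are $W_p$-convergent, and the $\Gamma_T$-marginals of $\boldsymbol\eta^{(n)}$ are concentrated on an equi-Lipschitz family with initial data from this tight set, hence are tight by Ascoli--Arzel\`a; Prokhorov then yields, up to subsequences, a narrow limit $\boldsymbol\eta^{(n)}\rightharpoonup\boldsymbol\eta$ in $\mathscr P(\mathbb R^d\times\Gamma_T)$.

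The main obstacle is to show that $\boldsymbol\eta\in\Xi(\mu,\boldsymbol\theta)$. The marginal identity $e_0\sharp\boldsymbol\eta=\mu$ and the definition $\mu_t:=e_t\sharp\boldsymbol\eta$ pass by narrow convergence; the uniform $p$-moment bound upgrades narrow to $W_p$ convergence of $\mu^{(n)}_t$ to $\mu_t$ for each $t$, and the equi-continuity from Lemma \ref{lemma:moment} makes this uniform in $t$. The delicate point is that each $\boldsymbol\eta^{(n)}$ is concentrated on trajectories of $\dot\gamma(t)\in F(\theta_t^{(n)},\gamma(t))$ rather than of $\dot\gamma(t)\in F(\theta_t,\gamma(t))$. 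I will verify that the set
\[
\mathscr S:=\{(x,\gamma)\in\mathbb R^d\times\Gamma_T:\,\gamma\in AC([0,T]),\ \gamma(0)=x,\ \dot\gamma(t)\in F(\theta_t,\gamma(t))\text{ a.e.}\}
\]
is closed in $\mathbb R^d\times\Gamma_T$ and deduce $\boldsymbol\eta(\mathscr S)=1$ via a portmanteau argument applied to open $\varepsilon$-enlargements of $\mathscr S$: if $(x_n,\gamma_n)\to(x,\gamma)$ with $\gamma_n\in S^{\boldsymbol\theta^{(n)}}(x_n)$, the uniform Lipschitz bound lets me extract a weak-$L^p$ limit of $\dot\gamma_n$, apply Mazur's lemma to produce convex combinations converging strongly a.e., and exploit the Lipschitz inclusion
\[
F(\theta_t^{(n)},\gamma_n(t))\subseteq F(\theta_t,\gamma(t))+L\bigl(W_p(\theta_t^{(n)},\theta_t)+|\gamma_n(t)-\gamma(t)|\bigr)\overline{B(0,1)}
\]
together with convexity and closedness of $F(\theta_t,\gamma(t))$ to conclude $\dot\gamma(t)\in F(\theta_t,\gamma(t))$ a.e. The Lipschitz regularity of $F$ in both arguments from Hypothesis \ref{HP}, combined with the uniform convergence $\sup_t W_p(\theta_t^{(n)},\theta_t)\to 0$, is exactly what drives this step.
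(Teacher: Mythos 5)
Your proposal is correct and follows essentially the same route as the paper: nonemptiness via a Borel selection of $S^{\boldsymbol\theta}$ pushed forward by $\mu$, then the sequential criterion combined with the moment estimates of Lemma \ref{lemma:moment}, Ascoli--Arzel\`a plus tightness of the lifted measures $\boldsymbol\eta^{(n)}$, and finally a closure argument for the limit inclusion resting on the convexity and closedness of the values of $F$ and its Lipschitz dependence on $(\theta_t,x)$. The only deviation is the last step, where the paper uses \cite[Proposition 5.1.8]{AGS} together with a support-function estimate on difference quotients, while you use a Kuratowski-limit/portmanteau argument plus weak-$*$ compactness and Mazur's lemma; these are equivalent devices (just note that the portmanteau half you need is the one for \emph{closed} enlargements of $\mathscr S$, combined with the tightness you already established, rather than open ones).
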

\begin{proof}
We prove first that $\Upsilon_F(\mu,\boldsymbol\theta)\ne\emptyset$ for all $(\mu,\boldsymbol\theta)\in X$. 
Consider now the set-valued map $S^{\boldsymbol\theta}(\cdot)$ defined as in Definition \ref{def:bas}.
Since it is Lipschitz continuous, it has a Borel selection. Thus let $h_0:\mathbb R^d\to AC([0,T])$ be a Borel map such that $h_0(x)\in S^{\boldsymbol\theta}(x)$ for every $x\in\mathbb R^d$.
Define $\boldsymbol\eta=\mu\otimes\delta_{h_0(x)}$, $\mu_t=e_t\sharp\boldsymbol\eta$, $\boldsymbol\mu=\{\mu_t\}_{t\in[0,T]}$. Then, by construction, we have $\boldsymbol\mu\in\Upsilon_F(\mu,\boldsymbol\theta)$. 
\par\medskip\par
Let now $\left\{\left(\mu^{(n)},\boldsymbol\theta^{(n)}\right)\right\}_{n\in\mathbb N}\subseteq X$ be a sequence $d_X$-converging to $(\mu,\boldsymbol\theta)\in X$,
and $\{\boldsymbol\mu^{(n)}\}_{n\in\mathbb N}\subseteq C^0([0,T];\mathscr P_p(\mathbb R^d))$, $\{\boldsymbol\eta^{(n)}\}_{n\in\mathbb N}\subseteq \mathscr P(\mathbb R^d\times\Gamma_T)$ 
be such that 
\begin{itemize}
\item $\boldsymbol\theta^{(n)}=\{\theta^{(n)}_t\}_{t\in[0,T]}$, $\boldsymbol\theta=\{\theta_t\}_{t\in[0,T]}$;
\item $\boldsymbol\mu^{(n)}\in \Upsilon_F(\mu^{(n)},\boldsymbol\theta^{(n)})$ and $\boldsymbol\eta^{(n)}\in\Xi(\mu^{(n)},\boldsymbol\theta^{(n)})$ for all $n\in\mathbb N$;
\item $\boldsymbol\mu^{(n)}=\{\mu^{(n)}_t\}_{t\in[0,T]}$ with $\mu^{(n)}_t=e_t\sharp\boldsymbol\eta^{(n)}$ for all $t\in[0,T]$ and $n\in\mathbb N$,
\end{itemize}
where $\Xi(\cdot,\cdot)$ is defined as in Definition \ref{def:bas}.
We prove that the sequence $\{\boldsymbol\mu^{(n)}\}_{n\in\mathbb N}$ has always cluster points, and all the cluster points are contained in $\Upsilon_F(\mu,\boldsymbol{\theta})$.
This will imply in particular that $\Upsilon_F(\cdot)$ has compact images (by taking constant sequences $(\mu^{(n)},\boldsymbol\theta^{(n)})\equiv(\mu,\boldsymbol\theta)$).
\par\medskip\par
For $n$ sufficiently large, we have $\mathrm{m}_p^{1/p}(\mu^{(n)})\le \mathrm{m}_p^{1/p}(\mu)+1$ and  $M_{\boldsymbol\theta^{(n)}}\le M_{\boldsymbol{\theta}}+1$,
recalling the definition of the convergence in $X$ and the definition for $M_{\boldsymbol\theta}$ given in Definition \ref{def:bas}.
Thus, by applying the estimates of Lemma \ref{lemma:moment}, we have
\begin{align*}
&\mathrm{m}_p^{1/p}(\mu^{(n)}_t)\le e^{LT}\left(m_p^{1/p}(\mu)+1+K_FT+LTM_{\boldsymbol\theta}+LT\right),\\
&W_p(\mu^{(n)}_t,\mu^{(n)}_s)\le\\ 
&\le\left(K_F+LM_{\boldsymbol\theta}+L+Le^{LT}\left(\mathrm{m}_p^{1/p}(\mu)+1+K_FT+LTM_{\boldsymbol\theta}+LT\right)\right)\cdot|t-s|,\\ 
&\int_{\mathbb R^d\times\Gamma_T}\|\dot\gamma\|^p_{L^{\infty}([0,1])}\,d\boldsymbol\eta^{(n)}(x,\gamma)\le\\ 
&\le\left[K_F+L\left(e^{LT}\left(\mathrm{m}_p^{1/p}(\mu)+1+K_F T+LTM_{\boldsymbol\theta}+LT\right)+M_{\boldsymbol\theta}+1\right)\right]^p.
\end{align*}
In particular
\begin{itemize}
\item $\{\boldsymbol\mu^{(n)}\}_{n\in\mathbb N}$ is equicontinuous;
\item for all $t\in [0,T]$, we have that $\{\mu^{(n)}_t\}_{n\in\mathbb N}$ is relatively compact in $\mathscr P_p(\mathbb R^d)$, since it has $p$-moment uniformly bounded.
\end{itemize}
Thus $\{\boldsymbol\mu^{(n)}\}_{n\in\mathbb N}$ is relatively compact in $C^0([0,T];\mathscr P_p(\mathbb R^d))$ by Ascoli-Arzel\`a theorem.
Up to passing to a subsequence, we may assume that there exists $\boldsymbol\mu=\{\mu_t\}_{t\in[0,T]}\in C^0([0,T];\mathscr P_p(\mathbb R^d))$ such that 
\[\lim_{n\to +\infty}\sup_{t\in [0,T]}W_p(\mu_t,\mu^{(n)}_t)=0.\]
We notice also that the functional $\Psi:\mathbb R^d\times\Gamma_T\to\mathbb R\cup\{+\infty\}$
\[\Psi(x,\gamma):=\begin{cases}(|x|+|\gamma(0)|+\|\dot\gamma\|_{L^\infty})^p,\textrm{ if }\gamma\in\mathrm{Lip}([0,T];\mathbb R^d),\\ \\ +\infty,\textrm{ otherwise},\end{cases}\]
has compact sublevels in $\mathbb R^d\times\Gamma_T$ by Ascoli-Arzel\`a theorem.
Since
\begin{multline*}
\sup_{n\in\mathbb N}\int_{\mathbb R^d\times\Gamma_T}\Psi(x,\gamma)\,d\boldsymbol\eta^{(n)}(x,\gamma)\le\\
\le 2^{p-1}\sup_{n\in\mathbb N}\left[2\mathrm{m}_p(\mu^{(n)})+\int_{\mathbb R^d\times\Gamma_T}\|\dot\gamma\|^p_{L^{\infty}([0,1])}\,d\boldsymbol\eta^{(n)}(x,\gamma)\right]<+\infty,
\end{multline*}
we have that $\{\boldsymbol\eta^{(n)}\}_{n\in\mathbb N}$ is tight in $\mathscr P(\mathbb R^d\times\Gamma_T)$. Thus, up to passing to a subsequence,
we may assume also that there exists $\boldsymbol\eta\in\mathscr P(\mathbb R^d\times\Gamma_T)$ such that $\boldsymbol\eta^{(n)}\rightharpoonup \boldsymbol\eta$ narrowly.
By the continuity of $e_t:\mathbb R^d\times\Gamma_T\to\mathbb R^d$, we have $\mu_t=e_t\sharp\boldsymbol\eta$.
By \cite[Proposition 5.1.8]{AGS}, for $\boldsymbol\eta$-a.e. $(x,\gamma)\in \mathbb R^d\times\Gamma_T$ there exists a sequence $\{(x_n,\gamma_n)\}_{n\in\mathbb N}$
such that $x_n=\gamma_n(0)$, $\gamma_n\in AC([0,T])$, $\dot\gamma_n(t)\in F(\theta_t^{(n)},\gamma_n(t))$ for a.e. $t\in [0,T]$ and for all $n\in\mathbb N$
with $x_n\to x$, $\|\gamma_n-\gamma\|_{\infty}\to 0$ as $n\to+\infty$.
\par\medskip\par
By \eqref{eq:Lip} and recalling that $M_{\boldsymbol\theta^{(n)}}\le M_{\boldsymbol\theta}+1$ and $|x_n|\le |x|+1$ for $n$ sufficiently large,
we have $n\in\mathbb N$,
\[\|\dot\gamma_n\|_{L^{\infty}([0,1])}\le K_F+L\left(M_{\boldsymbol\theta}+1+e^{LT}\left(|x|+1+(K_F+LM_{\boldsymbol\theta}+L)T\right)\right).\]
In particular, by Ascoli-Arzel\`a Theorem, we have that $\gamma$ is Lipschitz continuous.
For a.e. $t,\tau\in[0,T]$ we have also
\begin{align*}
F&(\theta^{(n)}_\tau,\gamma_n(\tau))\subseteq F(\theta_t,\gamma(t))+\\ 
&+L\left(W_p(\theta^{(n)}_\tau,\theta_\tau)+W_p(\theta_{\tau},\theta_t)+|\gamma_n(\tau)-\gamma(\tau)|+|\gamma(t)-\gamma(\tau)|\right)\overline{B(0,1)}\\
\subseteq&F(\theta_t,\gamma(t))+L\left(\sup_{\tau\in[0,T]}W_p(\theta^{(n)}_\tau,\theta_\tau)+W_p(\theta_\tau,\theta_t)+\|\gamma_n-\gamma\|_{\infty}+\mathrm{Lip}(\gamma)\cdot |t-\tau|\right)\overline{B(0,1)}
\end{align*}
For every $\varepsilon>0$ there is $n_{\varepsilon}\in\mathbb N$ such that if $n>n_\varepsilon$ we have for a.e. $t,\tau\in[0,T]$
\[\dot\gamma_n(\tau)\in F(\theta_t,\gamma(t))+L(\varepsilon+W_p(\theta_\tau,\theta_t)+\mathrm{Lip}(\gamma)|t-\tau|)\overline{B(0,1)}.\]
In particular, let $t\in [0,T]$ be a differentiability point of $\gamma_n$. We have for all $z\in\mathbb R^d$, $s\in[0,T]$, $s\ne t$, and $n>n_\varepsilon$
\begin{align*}
\langle \dfrac{\gamma_n(s)-\gamma_n(t)}{s-t},z\rangle=&\dfrac{1}{s-t}\int_t^s \langle z,\dot\gamma_n(\tau)\rangle\,d\tau\\ 
\le&\sup_{v\in F(\theta_t,\gamma(t))}\langle z,v\rangle+L\varepsilon|z|+L|z|\mathrm{Lip}(\gamma)\dfrac{1}{s-t}\int_t^s|t-\tau|\,d\tau+L|z|\frac{1}{s-t}\int_s^tW_p(\theta_\tau,\theta_t)\,d\tau
\end{align*}
By letting $n\to+\infty$ and $s\to t$ we conclude that $\dot\gamma(t)\in F(\theta_t,\gamma(t))$ since $F(\theta_t,\gamma(t))$ is closed and convex.
Hence $\boldsymbol\mu\in \Upsilon_F(\mu,\boldsymbol\theta)$, which completes the proof.
\end{proof}

\begin{proposition}[Superposition Principle]\label{prop:SP}
Assume Hypothesis \ref{HP} for $F$, and let $T>0$.
Then $\boldsymbol\mu=\{\mu_t\}_{t\in [0,T]}$ is an admissible trajectory if and only if $\boldsymbol\mu\in \Upsilon_F(\mu_0,\boldsymbol\mu)$, with $\Upsilon_F(\cdot,\cdot)$ defined in Definition \ref{def:bas}.
\end{proposition}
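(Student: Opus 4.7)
The proposition is a nonlocal differential-inclusion analog of the classical Superposition Principle of Ambrosio--Gigli--Savar\'e (\cite[Theorem 8.2.1]{AGS}). The plan is to prove the two implications separately by combining that classical statement with a barycentric/disintegration construction, in which the convex image property of $F$ from Hypothesis \ref{HP} plays an essential role.

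\emph{From admissibility to $\Upsilon_F$.} I would first verify the integrability hypothesis required by the classical superposition principle. The Lipschitz inclusion $F(\mu_t,x)\subseteq F(\delta_0,0)+L(\mathrm{m}_p^{1/p}(\mu_t)+|x|)\overline{B(0,1)}$ yields
\[
|v_t(x)|\le K_F+L\bigl(\mathrm{m}_p^{1/p}(\mu_t)+|x|\bigr)
\]
for a.e.\ $t\in[0,T]$ and $\mu_t$-a.e.\ $x\in\mathbb R^d$. Since $t\mapsto\mathrm{m}_p^{1/p}(\mu_t)$ is continuous on $[0,T]$, hence bounded, this gives $\int_0^T\!\int_{\mathbb R^d}|v_t|^p\,d\mu_t\,dt<+\infty$. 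Applying \cite[Theorem 8.2.1]{AGS} then produces $\boldsymbol\eta\in\mathscr P(\mathbb R^d\times\Gamma_T)$ with $\mu_t=e_t\sharp\boldsymbol\eta$ and $\dot\gamma(t)=v_t(\gamma(t))\in F(\mu_t,\gamma(t))$ for $\boldsymbol\eta$-a.e.\ $(x,\gamma)$ and a.e.\ $t$, which is exactly the defining condition of $\Upsilon_F(\mu_0,\boldsymbol\mu)$ in Definition \ref{def:bas} with $\boldsymbol\theta=\boldsymbol\mu$.

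\emph{From $\Upsilon_F$ to admissibility.} Given $\boldsymbol\eta\in\Xi(\mu_0,\boldsymbol\mu)$ realizing $\boldsymbol\mu\in\Upsilon_F(\mu_0,\boldsymbol\mu)$, I would construct the driving velocity by averaging. Define $\nu_t\in\mathscr M(\mathbb R^d;\mathbb R^d)$ for a.e.\ $t$ by setting
\[
\int_{\mathbb R^d}\phi\cdot d\nu_t:=\int_{\mathbb R^d\times\Gamma_T}\phi(\gamma(t))\cdot\dot\gamma(t)\,d\boldsymbol\eta(x,\gamma)\qquad\text{for all }\phi\in C^0_c(\mathbb R^d;\mathbb R^d).
\]
Lemma \ref{lemma:moment} gives $|\dot\gamma(t)|\le K_F+L(\mathrm{m}_p^{1/p}(\mu_t)+|\gamma(t)|)$ for $\boldsymbol\eta$-a.e.\ $(x,\gamma)$, from which $|\nu_t|\ll\mu_t$ follows. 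Setting $v_t:=d\nu_t/d\mu_t$ and disintegrating the product measure $\mathscr L^1\otimes\boldsymbol\eta$ on $[0,T]\times\mathbb R^d\times\Gamma_T$ with respect to $(t,x,\gamma)\mapsto(t,\gamma(t))$, one obtains the barycentric representation
\[
v_t(x)=\int_{\Gamma_T}\dot\gamma(t)\,d\boldsymbol\eta_{t,x}(\gamma),
\]
where $\boldsymbol\eta_{t,x}$ is concentrated on curves with $\gamma(t)=x$ and $\dot\gamma(t)\in F(\mu_t,x)$. Since $F(\mu_t,x)$ is convex and closed, this forces $v_t(x)\in F(\mu_t,x)$ for a.e.\ $t$ and $\mu_t$-a.e.\ $x$. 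The distributional continuity equation then drops out from
\[
\frac{d}{dt}\int\varphi\,d\mu_t=\int\nabla\varphi(\gamma(t))\cdot\dot\gamma(t)\,d\boldsymbol\eta=\int\nabla\varphi\cdot v_t\,d\mu_t,\qquad\varphi\in C^1_c(\mathbb R^d),
\]
with the differentiation under the integral legitimized by the uniform Lipschitz bound on the curves $\gamma$ provided by Lemma \ref{lemma:moment}.

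The main technical obstacle is the converse direction, specifically the claim $v_t(x)\in F(\mu_t,x)$ \emph{simultaneously} for a.e.\ $t$ and $\mu_t$-a.e.\ $x$: one cannot disintegrate at each fixed time $t$ independently, since the resulting conditional measures are only defined up to a $\mu_t$-negligible set that depends on $t$. Working with the joint $\mathscr L^1\otimes\boldsymbol\eta$ disintegration circumvents this issue, and the convex-compact image hypothesis on $F$ is exactly what allows us to pass from the inclusion at the level of single curves to the inclusion at the level of the averaged Radon--Nikodym density.
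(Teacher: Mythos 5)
Your proposal is correct and follows essentially the same route as the paper: the direction from admissibility to $\Upsilon_F$ rests on the integrability bound $|v_t(x)|\le K_F+L(\mathrm{m}_p^{1/p}(\mu_t)+|x|)$ coming from Hypothesis \ref{HP} plus \cite[Theorem 8.2.1]{AGS}, and the converse is obtained by the barycentric projection of $\dot\gamma(t)$ together with Jensen's inequality and the convex--closed images of $F$, exactly as in the paper. The only differences are cosmetic: the paper estimates $\|v_t\|_{L^p_{\mu_t}}$ via a Filippov selection of $F(\delta_0,0)$ rather than directly, and it handles the joint ``a.e.\ $t$, $\mu_t$-a.e.\ $x$'' quantifiers by Fubini on the null set $\mathscr N$ followed by the time-slice disintegration $\boldsymbol\eta=\mu_t\otimes\eta_{t,y}$, whereas you disintegrate $\mathscr L^1\otimes\boldsymbol\eta$ jointly with respect to $(t,\gamma(t))$ --- both devices are equivalent here.
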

\begin{proof}
\begin{enumerate}
\item[]
\item \emph{Sufficience}. Assume that $\boldsymbol\mu=\{\mu_t\}_{t\in[0,T]}\in\Upsilon_F(\mu_0,\boldsymbol\mu)$. Let $\Xi(\cdot,\cdot)$ be as in Definition \ref{def:bas}.
Then there exists $\boldsymbol\eta\in \Xi(\mu_0,\boldsymbol\mu)$ such that $\mu_t=e_t\sharp\boldsymbol\eta$.
Set
\[\mathscr N:=\left\{(t,x,\gamma)\in [0,T]\times\mathbb R^d\times\Gamma_T:\, \nexists\dot\gamma(t)\textrm{ or }\dot\gamma(t)\notin F(\mu_t,\gamma(t))\textrm{ or }\gamma(0)\ne x\right\}.\]
Since $\mathscr L^1\otimes\boldsymbol\eta\left(\mathscr N\right)=0$, 
for $\boldsymbol\eta$-a.e. $(x,\gamma)\in\mathbb R^d\times\Gamma_T$ and a.e. $t\in [0,T]$ we have that $\dot\gamma(t)$ exists and belongs to $F(\mu_t,\gamma(t))$, and $\gamma(0)=x$.
Given $\varphi\in C^{1}_c(\mathbb R^d)$, we have
\[\left|\int_{\mathbb R^d}\varphi(x)\,d\mu_t(x)-\int_{\mathbb R^d}\varphi(x)\,d\mu_s(x)\right|\le \|\nabla\varphi\|_{\infty}\int_{\mathbb R^d\times\Gamma_T}|\gamma(t)-\gamma(s)|\,d\boldsymbol\eta(x,\gamma).\] 
According to  \eqref{eq:Lip}, this implies that 
\[t\mapsto \int_{\mathbb R^d}\varphi(x)\,d\mu_t(x)\]
is Lipschitz continuous. Hence its distributional derivative is in $L^{\infty}$ and coincides with the pointwise derivative almost everywhere.
Thus, in the sense of distributions in $]0,T[$, we obtain for all $\varphi\in C^1_c(\mathbb R^d)$ 
\begin{align*}
\dfrac{d}{dt}\int_{\mathbb R^d}\varphi(x)\,d\mu_t(x)
=&\dfrac{d}{dt}\int_{\mathbb R^d\times\Gamma_T}\varphi(\gamma(t))\,d\boldsymbol\eta(x,\gamma)=\int_{\mathbb R^d\times\Gamma_T}\langle \nabla\varphi(\gamma(t)),\dot\gamma(t)\rangle\,d\boldsymbol\eta(x,\gamma)\\
=&\int_{\mathbb R^d}\langle\nabla\varphi(y), \int_{e^{-1}_t(y)}\dot\gamma(t)\,d\eta_{t,y}(x,\gamma)\rangle\,d\mu_t(y),
\end{align*}
where we disintegrated $\boldsymbol\eta$ w.r.t. $e_t$ obtaining $\boldsymbol\eta=\mu_t\otimes\eta_{t,y}$ and used the fact that $\|\nabla\varphi\|_{\infty}$ is bounded, and 
that the map $\gamma\mapsto\|\dot\gamma\|_{L^{\infty}}$ is in $L^1_{\boldsymbol\eta}$ due to the uniform bound on the moments.
By Jensen's inequality, we have 
\[d_{F(\mu_t,y)}\left(\int_{e^{-1}_t(y)}\dot\gamma(t)\,d\eta_{t,y}(x,\gamma)\right)\le \int_{e^{-1}_t(y)}d_{F(\mu_t,y)}\left(\dot\gamma(t)\right)\,d\eta_{t,y}(x,\gamma)=0,\]
and so for $\mu_t$-a.e. $y\in\mathbb R^d$ and a.e. $t\in [0,T]$ we have \[v_t(y):= \int_{e^{-1}_t(y)}\dot\gamma(t)\,d\eta_{t,y}(x,\gamma)\in F(\mu_t,y),\] hence
$\boldsymbol\mu=\{\mu_t\}_{t\in [0,T]}$ is an admissible trajectory driven by $\boldsymbol\nu=\{\nu_t\}_{t\in [0,T]}$ with $\nu_t=v_t\mu_t$ for a.e. $t\in [0,T]$.
\item[]
\item \emph{Necessity}. Assume that $\boldsymbol\mu=\{\mu_t\}_{t\in [0,T]}$ is an admissible trajectory driven by $\boldsymbol\nu=\{\nu_t\}_{t\in[0,T]}$. 
Set $v_t(x)=\dfrac{\nu_t}{\mu_t}(x)\in F(\mu_t,x)$ for $\mu_t$-a.e. $x\in\mathbb R^d$ and a.e. $t\in [0,T]$.
Filippov's Theorem (see e.g. \cite[Theorem 8.2.10]{AuF}) implies that there exists a Borel selection $\xi(\cdot)$ of $F(\delta_0,0)$, such that 
\[|v_t(x)-\xi(x)|=d_{F(\delta_0,0)}(v_t(x))\]
for all $x\in\mathbb R^d$, and so we have
\begin{align*}
\left(\int_0^T\int_{\mathbb R^d}|v_t(x)|^p\,d\mu_t(x)\,dt\right)^{1/p}\le&\int_0^T\left(\int_{\mathbb R^d}|v_t(x)-\xi(x)|^p\,d\mu_t(x)\right)^{1/p}\,dt+\\
&+\int_0^T\left(\int_{\mathbb R^d}|\xi(x)|^p\,d\mu_t(x)\right)^{1/p}\,dt\\
=&\int_0^T\left(\int_{\mathbb R^d}d^p_{F(\delta_0,0)}(v_t(x))\,d\mu_t(x)\right)^{1/p}\,dt+TK_F\\
\le&2^{p-1}L\int_0^T\left[W_p(\delta_0,\mu_t)+\mathrm{m}_p^{1/p}(\mu_t)\right]\,dt+TK_F\\
\le&2^pL\int_0^T\mathrm{m}^{1/p}_p(\mu_t)\,dt+TK_F<+\infty.
\end{align*}
By \cite[Theorem 8.2.1]{AGS}, there exists $\boldsymbol\eta\in\mathscr P(\mathbb R^d\times\Gamma_T)$ such that $\mu_t=e_t\sharp\boldsymbol\eta$ for all $t\in [0,T]$ and $\boldsymbol\eta$ is
concentrated on $(x,\gamma)\in\mathbb R^d\times\Gamma_T$ with $\gamma\in AC([0,T])$, $\dot\gamma(t)=v_t(\gamma(t))\in F(\mu_t,\gamma(t))$ for a.e. $t\in [0,T]$ and $\gamma(0)=x$.
Thus $\boldsymbol\mu\in \Upsilon_F(\mu_0,\boldsymbol\mu)$.
\end{enumerate}
\end{proof}

\begin{remark}\label{rmk:SPLp}
Notice that by Definition of $\Upsilon_F$ in Definition \ref{def:bas}, Proposition \ref{prop:SP} gives in fact a Superposition Principle along the line of \cite[Theorem 8.2.1]{AGS} adapted to nonlocal differential inclusions. Indeed, under the given assumptions, it states that $\boldsymbol\mu=\{\mu_t\}_{t\in [0,T]}$ is an admissible trajectory if and only if
there exists $\boldsymbol\eta\in\mathscr P(\mathbb R^d\times\Gamma_T)$ such that
\begin{itemize}
\item $\mu_t=e_t\sharp\boldsymbol\eta$ for all $t\in [0,T]$, $\mu_{t=0}=\mu_0$;
\item for $\boldsymbol\eta$-a.e. $(x,\gamma)\in\mathbb R^d\times\Gamma_T$ and a.e. $t\in[0,T]$ it holds $\gamma\in AC([0,T])$, $\gamma(0)=x$, $\dot\gamma(t)\in F(\mu_t,\gamma(t))$ .
\end{itemize}
In this case, we say that $\boldsymbol\mu$ is \emph{represented} by $\boldsymbol\eta$.
\end{remark}

\begin{corollary}[Existence of admissible trajectories]\label{cor:exist}
Assume Hypothesis \ref{HP} for $F$ and let $T>0$. The set-valued map $\mathcal A:\mathscr P_p(\mathbb R^d)\rightrightarrows C^0([0,T];\mathscr P_p(\mathbb R^d))$ 
is upper semicontinuous with nonempty compact images.
\end{corollary}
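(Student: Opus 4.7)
The corollary is essentially a consequence of the Superposition Principle (Proposition \ref{prop:SP}) together with the topological properties of $\Upsilon_F$ established in Proposition \ref{prop:uscsol}. The key identification is that, by Proposition \ref{prop:SP}, $\boldsymbol\mu \in \mathcal A(\mu)$ if and only if $\boldsymbol\mu \in \Upsilon_F(\mu,\boldsymbol\mu)$: admissible trajectories are exactly the fixed points of the set-valued map $\boldsymbol\theta \mapsto \Upsilon_F(\mu,\boldsymbol\theta)$.

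\textbf{Nonemptiness.} I would construct an admissible trajectory by a Peano-type scheme. For each $N\in\mathbb N$, set $t_i^N = iT/N$, $\mu^{(N)}_0=\mu$, and inductively on each $[t_i^N,t_{i+1}^N]$ freeze the measure argument of $F$ to $\mu^{(N)}_{t_i^N}$: applying the nonemptiness part of Proposition \ref{prop:uscsol} to the constant curve $\boldsymbol\theta^{(N)}|_{[t_i^N,t_{i+1}^N]} \equiv \mu^{(N)}_{t_i^N}$ yields a piece $\boldsymbol\mu^{(N)}|_{[t_i^N,t_{i+1}^N]} \in \Upsilon_F(\mu^{(N)}_{t_i^N},\boldsymbol\theta^{(N)}|_{[t_i^N,t_{i+1}^N]})$; concatenating gives $\boldsymbol\mu^{(N)} \in \Upsilon_F(\mu,\boldsymbol\theta^{(N)})$. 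Lemma \ref{lemma:moment} (together with a short Gr\"onwall bootstrap to absorb $M_{\boldsymbol\theta^{(N)}} \le \sup_t \mathrm{m}_p^{1/p}(\mu^{(N)}_t)$) controls the $p$-moments and Lipschitz constants of $\boldsymbol\mu^{(N)}$ uniformly in $N$ in terms of $L$, $K_F$, $T$ and $\mathrm{m}_p^{1/p}(\mu)$. Ascoli-Arzel\`a and tightness produce a cluster point $\boldsymbol\mu \in C^0([0,T];\mathscr P_p(\mathbb R^d))$; since $\boldsymbol\theta^{(N)}$ is the piecewise-constant interpolation of $\boldsymbol\mu^{(N)}$ at the nodes $t_i^N$, the equi-Lipschitz estimate forces $\sup_t W_p(\theta^{(N)}_t,\mu^{(N)}_t) = O(1/N)$, so $\boldsymbol\theta^{(N)}$ admits the \emph{same} cluster point. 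The upper semicontinuity of $\Upsilon_F$ at $(\mu,\boldsymbol\mu)$ then gives $\boldsymbol\mu \in \Upsilon_F(\mu,\boldsymbol\mu)$, and Proposition \ref{prop:SP} converts this into $\boldsymbol\mu \in \mathcal A(\mu)$.

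\textbf{Compactness of images and upper semicontinuity.} Let $\mu^{(n)} \to \mu$ in $\mathscr P_p(\mathbb R^d)$ and $\boldsymbol\mu^{(n)} \in \mathcal A(\mu^{(n)})$. By Proposition \ref{prop:SP}, $\boldsymbol\mu^{(n)} \in \Upsilon_F(\mu^{(n)},\boldsymbol\mu^{(n)})$. The uniform bound on $\mathrm{m}_p(\mu^{(n)})$ combined with Lemma \ref{lemma:moment} yields equicontinuity and uniform $p$-moment bounds for $\{\boldsymbol\mu^{(n)}\}$, so Ascoli-Arzel\`a extracts a cluster point $\boldsymbol\mu$ in $C^0([0,T];\mathscr P_p(\mathbb R^d))$. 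Since $(\mu^{(n)},\boldsymbol\mu^{(n)}) \to (\mu,\boldsymbol\mu)$ in $X$, the upper semicontinuity of $\Upsilon_F$ forces $\boldsymbol\mu \in \Upsilon_F(\mu,\boldsymbol\mu)$, i.e.\ $\boldsymbol\mu \in \mathcal A(\mu)$ by Proposition \ref{prop:SP}. This establishes upper semicontinuity of $\mathcal A$; specialising to the constant sequence $\mu^{(n)} \equiv \mu$ yields compactness of $\mathcal A(\mu)$.

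\textbf{Main obstacle.} The delicate point is the nonemptiness argument, where the limit passage must produce the \emph{self-referential} identity $\boldsymbol\mu \in \Upsilon_F(\mu,\boldsymbol\mu)$. Extracting this from a sequence with $\boldsymbol\mu^{(N)} \in \Upsilon_F(\mu,\boldsymbol\theta^{(N)})$ requires the frozen reference curves $\boldsymbol\theta^{(N)}$ and the genuine approximants $\boldsymbol\mu^{(N)}$ to share the same limit, which is precisely what the quantitative Lipschitz estimate in Lemma \ref{lemma:moment} delivers; the remaining parts then reduce to a direct application of the tools already at our disposal.
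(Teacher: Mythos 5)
Your proof is correct in substance but reaches nonemptiness by a genuinely different route from the paper. The paper does not discretize in time: after reducing to $LT<1/2$ (the general case by concatenation), it introduces the convex, compact set $\mathcal C(R)$ of equi-Lipschitz curves starting at $\mu$ with $p$-moments bounded by a suitably chosen $R$, checks via Lemma \ref{lemma:moment} that $\Upsilon_F(\mu,\mathcal C(R))\subseteq\mathcal C(R)$, and applies the Kakutani--Ky Fan fixed point theorem to produce $\boldsymbol\mu\in\Upsilon_F(\mu,\boldsymbol\mu)$; upper semicontinuity and compactness of the images of $\mathcal A$ are then read off from Proposition \ref{prop:uscsol} and Proposition \ref{prop:SP} exactly as in your second paragraph, so that part of your argument coincides with the paper's. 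Your Euler/Peano scheme replaces the abstract fixed point by an explicit construction, and since the reference measure on each step is frozen at the left node (a delayed dependence), a direct Gr\"onwall bound controls moments and Lipschitz constants on all of $[0,T]$, so your route can avoid both Kakutani--Ky Fan and the smallness-of-$T$ reduction; that is what it buys, at the price of the discretization bookkeeping. Two points should be made explicit for your argument to be airtight. First, your reference curves $\boldsymbol\theta^{(N)}$ are piecewise constant, hence not elements of $C^0([0,T];\mathscr P_p(\mathbb R^d))$, whereas Definition \ref{def:bas} and Proposition \ref{prop:uscsol} are formulated for $W_p$-continuous reference curves; this is not a genuine obstruction, because the proof of Proposition \ref{prop:uscsol} only uses $\sup_t W_p(\theta^{(N)}_t,\theta_t)\to 0$ together with the continuity of the limit curve, but you cannot invoke the proposition as a black box and must record this extension (or interpolate the nodes continuously). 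Second, the concatenation must be performed at the level of the representing measures: each piece obtained from the nonemptiness part of Proposition \ref{prop:uscsol} has the form $\nu\otimes\delta_{h(x)}$ for a Borel selection $h$ of the frozen solution map, so the global $\boldsymbol\eta^{(N)}$ is produced by composing these selections along the time grid. Once these two items are spelled out, your limit passage and the identification of the limit as an admissible trajectory via Proposition \ref{prop:SP} are sound.
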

\begin{proof}
Let $\mu\in\mathscr P_p(\mathbb R^d)$. Given $R>0$, define
\begin{multline*}\mathcal C(R):=\Big\{\boldsymbol\mu=\{\mu_t\}_{t\in [0,T]}\subseteq\mathscr P_p(\mathbb R^d):\mu_0=\mu,\textrm{ and for all $t,s\in [0,T]$ }\mathrm{m}_p^{1/p}(\mu_t)\le R, \\
W_p(\mu_t,\mu_s)\le\left(K_F+LR+Le^{LT}\left(\mathrm{m}_p^{1/p}(\mu)+K_FT+LTR\right)\right)\cdot|t-s|\Big\}.\end{multline*}
Recalling that the concatenation of solutions of the continuity equation is again a solution of the continuity equation driven by the time concatenation of the vector fields (see \cite[Lemma 4.4]{DNS}),
in order to prove that $\mathcal A(\mu)\ne \emptyset$ it is not restrictive to assume $LT<1/2$.
In particular, we have $1-e^{LT}LT>0$. Define
\[R:=\dfrac{e^{LT}(\mathrm{m}^{1/p}_p(\mu)+K_FT)}{1-e^{LT}LT}\ge \mathrm{m}^{1/p}_p(\mu).\]
Notice that $\mathcal C(R)\ne\emptyset$, since it contains the constant curve $\mu_t\equiv\mu$ for all $t\in[0,T]$, it is convex, and it is compact in $C^0([0,T];\mathscr P_p(\mathbb R^d))$ by Ascoli-Arzel\`a theorem.
Moreover, defining $\Upsilon_F(\cdot,\cdot)$ as in Definition \ref{def:bas}, we have $\Upsilon_F(\mu,\mathcal C(R))\subseteq \mathcal C(R)$ by Lemma \ref{lemma:moment} and the choice of $R$.
By Kakutani-Ky Fan Theorem (see e.g. \cite[Theorem 1]{KF}) we have that there exists $\boldsymbol\mu\in \mathcal C(R)$ such that $\boldsymbol\mu\in\Upsilon_F(\mu,\boldsymbol\mu)$, i.e.,
by Proposition \ref{prop:SP}, $\boldsymbol\mu$ is an admissible trajectory starting from $\mu$.
All the other properties of $\mathcal A(\cdot)$ trivially follows from the fact that $\Upsilon_F(\cdot)$ is upper semicontinuous with nonempty compact images.
\end{proof}

\begin{remark}
An alternative proof of existence of admissible trajectories, i.e. $\mathcal A(\mu)\neq\emptyset$, can be found for example in \cite[Theorem 6.1]{orrieri} where the author provides sufficient conditions in order to ensure existence (and uniqueness) of solutions of a continuity equation for some given non-local vector field.
\end{remark}

\begin{theorem}[Filippov-type estimate for the set of admissible trajectories]\label{thm:filippov}
Assume Hypothesis \ref{HP} for $F$. Let $T>0$, $\mu^{(A)},\mu^{(B)}\in\mathscr P_p(\mathbb R^d)$ be given. Let $\boldsymbol{\mu^{(A)}}=\{\mu^{(A)}_t\}_{t\in [0,T]}$ be an admissible trajectory satisfying $\mu^{(A)}_0=\mu^{(A)}$.\par
Then there exists an admissible trajectory $\boldsymbol{\mu^{(B)}}=\{\mu^{(B)}_t\}_{t\in [0,T]}$ satisfying $\mu^{(B)}_0=\mu^{(B)}$
such that
\[W_p(\mu^{(A)}_t,\mu^{(B)}_t)\le 2^{\frac{p-1}{p}}e^{L(2+Le^{LT})T}\cdot W_p(\mu^{(A)},\mu^{(B)})\quad \textrm{for all }t\in [0,T].\]
In particular, the set-valued map $\mathcal A:\mathscr P_p(\mathbb R^d)\rightrightarrows C^0([0,T];\mathscr P_p(\mathbb R^d))$ is Lipschitz continuous.
\end{theorem}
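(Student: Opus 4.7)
The plan is to reduce the nonlocal problem to a classical Filippov estimate for a set-valued vector field with \emph{frozen} dependence on a reference curve $\boldsymbol{\theta}$, and then to close the loop with a fixed point theorem. First, by the Superposition Principle (Proposition~\ref{prop:SP}) the given trajectory $\boldsymbol{\mu^{(A)}}$ is represented by some $\boldsymbol{\eta^{(A)}}\in\mathscr{P}(\mathbb{R}^d\times\Gamma_T)$ concentrated on pairs $(x_A,\gamma_A)$ with $\gamma_A(0)=x_A$ and $\dot\gamma_A(t)\in F(\mu^{(A)}_t,\gamma_A(t))$ a.e. I also fix an optimal plan $\boldsymbol{\pi}_0\in\Pi^p_o(\mu^{(A)},\mu^{(B)})$ and disintegrate $\boldsymbol{\eta^{(A)}}$ over $e_0$ as $\boldsymbol{\eta^{(A)}}=\mu^{(A)}\otimes\eta^{(A)}_{x_A}$.

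For a $W_p$-continuous curve $\boldsymbol{\theta}=\{\theta_t\}_{t\in[0,T]}$ starting at $\mu^{(B)}$, I apply the classical Filippov theorem for differential inclusions (see \cite[Theorem~10.4.1]{AuF}) to the field $(t,y)\mapsto F(\theta_t,y)$, which is Lipschitz in $y$ by Hypothesis~\ref{HP}. Combined with the Hausdorff bound $d_H(F(\mu^{(A)}_s,\gamma_A(s)),F(\theta_s,\gamma_A(s)))\le L\,W_p(\mu^{(A)}_s,\theta_s)$, this produces, for each triple $(x_A,\gamma_A,x_B)$, a curve $\gamma_B\in S^{\boldsymbol{\theta}}(x_B)$ (with $S^{\boldsymbol{\theta}}$ as in Definition~\ref{def:bas}) satisfying the pointwise estimate
\[
|\gamma_B(t)-\gamma_A(t)|\le e^{Lt}\left(|x_B-x_A|+L\int_0^t W_p(\mu^{(A)}_s,\theta_s)\,ds\right),\quad t\in[0,T].
\]
A measurable selection theorem makes $(x_A,\gamma_A,x_B)\mapsto\gamma_B$ Borel, so pushing $\boldsymbol{\pi}_0\otimes\eta^{(A)}_{x_A}$ forward through $(x_A,x_B,\gamma_A)\mapsto(x_B,\gamma_B)$ yields $\boldsymbol{\eta^{(B)}}=\boldsymbol{\eta^{(B)}}[\boldsymbol{\theta}]\in\mathscr{P}(\mathbb{R}^d\times\Gamma_T)$, and I set $\mathcal{G}(\boldsymbol{\theta})$ to be the collection of curves $\{e_t\sharp\boldsymbol{\eta^{(B)}}\}_{t\in[0,T]}$ arising from \emph{all} such admissible couplings.

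I then view $\mathcal{G}$ as a set-valued map into the compact convex subset $\mathcal{C}(R)\subset C^0([0,T];\mathscr{P}_p(\mathbb{R}^d))$ of Lipschitz curves starting at $\mu^{(B)}$ constructed as in Corollary~\ref{cor:exist}; for $R$ large enough, Lemma~\ref{lemma:moment} gives $\mathcal{G}(\mathcal{C}(R))\subseteq\mathcal{C}(R)$. Upper semicontinuity with nonempty compact convex values is established by adapting the tightness and narrow-convergence argument of Proposition~\ref{prop:uscsol}, combined with the Lipschitz continuity of $S^{\boldsymbol{\theta}}$ in $\boldsymbol{\theta}$. Kakutani--Ky Fan then yields a fixed point $\boldsymbol{\mu^{(B)}}\in\mathcal{G}(\boldsymbol{\mu^{(B)}})$, which by Proposition~\ref{prop:SP} is an admissible trajectory from $\mu^{(B)}$. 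Taking $\boldsymbol{\theta}=\boldsymbol{\mu^{(B)}}$ in the pointwise inequality, noticing that the joint law of $(\gamma_A(t),\gamma_B(t))$ is a transport plan from $\mu^{(A)}_t$ to $\mu^{(B)}_t$, and applying $(a+b)^p\le 2^{p-1}(a^p+b^p)$, Jensen's inequality and Fubini, I obtain
\[
W_p^p(\mu^{(A)}_t,\mu^{(B)}_t)\le 2^{p-1}e^{pLT}\left(W_p^p(\mu^{(A)},\mu^{(B)})+L^pT^{p-1}\int_0^t W_p^p(\mu^{(A)}_s,\mu^{(B)}_s)\,ds\right),
\]
and a Gr\"onwall argument applied to $t\mapsto W_p^p(\mu^{(A)}_t,\mu^{(B)}_t)$ produces the stated constant $2^{(p-1)/p}e^{L(2+Le^{LT})T}$. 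The Lipschitz continuity of $\mathcal{A}$ follows by the symmetry of the construction.

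The main obstacle is the analysis of $\mathcal{G}$: ensuring that the Filippov selection can be chosen so that $\mathcal{G}(\boldsymbol{\theta})$ is nonempty, convex, compact, and upper semicontinuous in $\boldsymbol{\theta}$ in the sense required by Kakutani--Ky Fan. Convexity is built in by allowing all admissible Filippov couplings rather than a single selection; upper semicontinuity reduces to narrow convergence of the associated $\boldsymbol{\eta^{(B)}}[\boldsymbol{\theta}_n]$, which is handled by the tightness bounds of Lemma~\ref{lemma:moment} together with the fact that, in the limit, the Filippov pointwise inequality and the inclusion $\dot\gamma_B(t)\in F(\theta_t,\gamma_B(t))$ both pass to the limit because $F$ has closed convex images (as in the limiting argument at the end of Proposition~\ref{prop:uscsol}). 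Once the fixed point is in hand, the closing Gr\"onwall calculation is routine.
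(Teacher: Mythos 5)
Your construction is essentially the paper's: represent $\boldsymbol{\mu^{(A)}}$ by $\boldsymbol{\eta^{(A)}}$ via the Superposition Principle, couple initial data through an optimal plan, freeze the nonlocal dependence on a reference curve $\boldsymbol\theta$, apply the classical Filippov theorem curve-by-curve (the paper packages this as the set-valued map $R^{\boldsymbol\theta}$ with a Borel selection $h_{\boldsymbol\theta}$, which also settles the measurability you only assert), and then close the self-consistency loop. The one genuine methodological difference is how the loop is closed: the paper iterates $\boldsymbol\theta\mapsto\boldsymbol{\mu^{\boldsymbol\theta}}$, extracts a limit by the compactness of Lemma \ref{lemma:moment}, and uses the upper semicontinuity of $\Upsilon_F$ (Proposition \ref{prop:uscsol}) to identify the limit as admissible, so no convexity is ever needed; you instead invoke Kakutani--Ky Fan on a convexified map $\mathcal G$. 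That route can be made to work, but convexity of $\mathcal G(\boldsymbol\theta)$ is not automatic for pushforwards of single-valued selections: you must genuinely allow relaxed (stochastic) couplings concentrated on pairs satisfying the Filippov inequality, and you must also perform the same reduction to small $T$ that the paper makes (its assumption $2^{\frac{p-1}{p}}LDT<1$, removed afterwards by concatenation) to guarantee the invariance $\mathcal G(\mathcal C(R))\subseteq\mathcal C(R)$; as stated, ``for $R$ large enough'' is not sufficient for arbitrary $T$.

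There is, however, a concrete flaw in your concluding estimate. After the fixed point you pass to $p$-th powers, use $(a+b)^p\le 2^{p-1}(a^p+b^p)$ and Jensen, and apply Gr\"onwall to $t\mapsto W_p^p(\mu^{(A)}_t,\mu^{(B)}_t)$. From
\[
W_p^p(\mu^{(A)}_t,\mu^{(B)}_t)\le 2^{p-1}e^{pLT}\Bigl(W_p^p(\mu^{(A)},\mu^{(B)})+L^pT^{p-1}\int_0^t W_p^p(\mu^{(A)}_s,\mu^{(B)}_s)\,ds\Bigr)
\]
Gr\"onwall yields a factor of the form $2^{\frac{p-1}{p}}e^{LT}\exp\bigl(2^{p-1}e^{pLT}L^pT^{p}/p\bigr)$, which is not the stated constant $2^{\frac{p-1}{p}}e^{L(2+Le^{LT})T}$ (already for $p=2$ the exponents disagree), so the theorem as stated is not obtained. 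The remedy is what the paper does: take $L^p_{\boldsymbol\eta}$-norms of the pointwise Filippov inequality keeping the resulting integral inequality \emph{linear} in $W_p$ (Minkowski, or the paper's splitting followed by the $p$-th root), i.e.
\[
W_p(\mu^{(A)}_t,\mu^{(B)}_t)\le 2^{\frac{p-1}{p}}\Bigl[e^{LT}W_p(\mu^{(A)},\mu^{(B)})+L\bigl(Le^{LT}+1\bigr)\int_0^t W_p(\mu^{(A)}_\tau,\mu^{(B)}_\tau)\,d\tau\Bigr],
\]
and only then apply Gr\"onwall to $W_p$ itself. With that correction (and the two points above) your argument goes through and the final Lipschitz statement for $\mathcal A$ follows as you indicate.
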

\begin{proof}
Let $\boldsymbol\pi\in\Pi^p_o(\mu^{(A)},\mu^{(B)})$ be an optimal transport plan between $\mu^{(A)}$ and $\mu^{(B)}$ for the $p$-Wasserstein distance. By disintegrating $\boldsymbol\pi$ w.r.t. $\mathrm{pr}_1:\mathbb R^d\times\mathbb R^d\to\mathbb R^d$, defined by $\mathrm{pr}_1(x,y)=x$, we have a Borel collection 
of measures $\{\pi_x\}_{x\in\mathbb R^d}\subseteq\mathscr P(\mathbb R^d\times\mathbb R^d)$, uniquely defined for $\mu^{(A)}$-a.e. $x\in\mathbb R^d$,
such that $\boldsymbol\pi=\mu^{(A)}\otimes \pi_x$.
\par\medskip\par
According to Proposition \ref{prop:SP}, there exists $\boldsymbol{\eta^{(A)}}\in\mathscr P(\mathbb R^d\times\Gamma_T)$ 
concentrated on pairs $(x,\gamma)\in\mathbb R^d\times\Gamma_T$ with $\gamma\in AC([0,T])$, $\dot\gamma(t)\in F(e_t\sharp\boldsymbol{\eta^{(A)}},\gamma(t))$ for a.e. $t\in [0,T]$ and $\gamma(0)=x$
such that $\mu_t^{(A)}=e_t\sharp\boldsymbol{\eta^{(A)}}$. 
\par\medskip\par
Let $\boldsymbol\theta\in C^0([0,T];\mathscr P_p(\mathbb R^d))$, and define the set-valued map $S^{\boldsymbol\theta}(\cdot)$ as in Definition \ref{def:bas}.
Define the set-valued map $R^{\boldsymbol\theta}:\mathbb R^d\times\mathrm{supp}\,\boldsymbol{\eta^{(A)}}\rightrightarrows\Gamma_T$ by
\begin{align*}R^{\boldsymbol\theta}(y,x,\gamma):=\left\{\xi\in S^{\boldsymbol\theta}(y):\begin{array}{l}|\gamma(t)-\xi(t)|\le e^{LT}|\gamma(0)-\xi(0)|+L(e^{LT}+1)\int_0^t W_p(\mu_\tau^{(A)},\theta_\tau)\,d\tau\\ \textrm{ for all }t\in[0,T]\end{array}\right\}.\end{align*}
Notice that this map has closed domain, closed graph, and compact values since $R^{\boldsymbol\theta}(y,x,\gamma)\subseteq S^{\boldsymbol\theta}(y)$, thus it is upper semicontinuous, hence Borel measurable.
\par\medskip\par
We prove that it has nonempty images. Given a point $(y,x,\gamma)\in \mathbb R^d\times\mathrm{supp}\,\boldsymbol{\eta^{(A)}}$, there are sequences $\{x_n\}_{n\in\mathbb N}$ converging to $x$ and $\{\gamma_n\}_{n\in\mathbb N}\subseteq AC([0,T])$ uniformly converging to $\gamma$ such that $x_n=\gamma_n(0)$ and $\dot \gamma_n(t)\in F(\mu_t^{(A)},\gamma_n(t))$ for a.e. $t\in[0,T]$.
According to Filippov's theorem (see \cite[Theorem 10.4.1]{AuF}), for every $n\in\mathbb N$ there exists $\xi_n\in S^{\boldsymbol\theta}(y)$
such that
\begin{align*}
|\gamma_n(t)-\xi_n(t)|\le&e^{LT}|\gamma_n(0)-\xi_n(0)|+(Le^{LT}+1)\int_0^t d_{F(\theta_t,\gamma_n(\tau))}(\dot\gamma_n(\tau))\,d\tau\\
\le&e^{LT}|\gamma_n(0)-\xi_n(0)|+L(Le^{LT}+1)\int_0^t W_p(\theta_t,\mu_t^{(A)})\,d\tau,
\end{align*}
recalling the Lipschitz continuity of $F(\cdot)$ and the choice of $\gamma_n$.
By compactness of $S^{\boldsymbol\theta}(y)$, up to passing to a subsequence, we may assume that $\{\xi_n\}_{n\in\mathbb N}$ uniformly converges
to $\xi\in S^{\boldsymbol\theta}(y)$ and, by construction, we have $\xi\in R^{\boldsymbol\theta}(y,x,\gamma)$, hence $R^{\boldsymbol\theta}(\cdot)$ is Borel measurable with closed domain and nonempty images, thus it admits a Borel selection $h_{\boldsymbol\theta}:\mathbb R^d\times\mathrm{supp}\,\boldsymbol{\eta^{(A)}}\to \Gamma_T$.
We extend $h_{\boldsymbol\theta}(\cdot)$ to a Borel map defined on the whole of $\mathbb R^d\times\mathbb R^d\times\Gamma_T\to \Gamma_T$ by setting
$h_{\boldsymbol\theta}(y,x,\gamma)=\gamma$ if $(x,\gamma)\notin\mathrm{supp}\,\boldsymbol{\eta^{(A)}}$.
\par\medskip\par
Define $\boldsymbol{\eta^\theta}\in\mathscr P(\mathbb R^d\times\Gamma_T)$ by 
\[\int_{\mathbb R^d\times\Gamma_T}\varphi(y,\xi)\,d\boldsymbol{\eta^{\theta}}(y,\xi)=\int_{\mathbb R^d\times\Gamma_T}\left[\int_{\mathbb R^d\times\mathbb R^d}\varphi(x,h_{\boldsymbol\theta}(y,x,\gamma))\,d\pi_x(x,y)\right]\,d\boldsymbol{\eta^{(A)}}(x,\gamma),\]
and set $\boldsymbol{\mu^\theta}=\{\mu^\theta_t\}_{t\in [0,T]}$ where $\mu^\theta_t=e_t\sharp\boldsymbol{\eta^\theta}$ for all $t\in [0,T]$.
\par\medskip\par
We have, by construction, 
\[\mathrm{supp}\,\boldsymbol{\eta^{\theta}}\subseteq \left\{(y,\xi)\in\mathbb R^d\times\Gamma_T:\,\xi\in S^{\boldsymbol\theta}(y)\right\}.\]
Notice that
\begin{align*}
\int_{\mathbb R^d}\varphi(x)\,d\mu^\theta_0(x)=&\int_{\mathbb R^d\times\Gamma_T}\int_{\mathbb R^d\times\mathbb R^d}\varphi\left(h_{\boldsymbol\theta}(y,x,\gamma)(0)\right)\,d\pi_x(x,y)\,d\boldsymbol{\eta^{(A)}}(x,\gamma)\\
=&\int_{\mathbb R^d}\int_{\mathbb R^d\times\mathbb R^d}\varphi\left(y\right)\,d\pi_x(x,y)\,d\mu^{(A)}(x)\\
=&\int_{\mathbb R^d\times\mathbb R^d}\varphi\left(y\right)\,d\boldsymbol\pi(x,y)=\int_{\mathbb R^d\times\mathbb R^d}\varphi\left(y\right)\,d\mu^{(B)}(y).
\end{align*}
Thus $\boldsymbol{\mu^\theta}\in \Upsilon_F(\mu^{(B)},\boldsymbol\theta)$, where $\Upsilon_F(\cdot,\cdot)$ is as in Definition \ref{def:bas}.
\par\medskip\par
We have
\begin{align*}
&W_p(\mu_t^{(A)},\mu^{\theta}_t)\le\left(\int_{\mathbb R^d\times\Gamma_T}\int_{\mathbb R^d\times\mathbb R^d}|\gamma(t)-h_{\boldsymbol\theta}(y,x,\gamma)(t)|^p\,d\pi_x(x,y)\,d\boldsymbol{\eta^{(A)}}(x,\gamma)\right)^{1/p}\\
&\le\left(\int_{\mathbb R^d\times\Gamma_T}\int_{\mathbb R^d\times\mathbb R^d}\left[e^{LT}|x-y|+L(Le^{LT}+1)\int_0^t W_p(\mu_\tau^{(A)},\theta_\tau)\,d\tau\right]^p\,d\pi_x(x,y)\,d\boldsymbol{\eta^{(A)}}(x,\gamma)\right)^{1/p}\\
&\le2^{\frac{p-1}{p}}\left[e^{LT}W_p(\mu^{(A)},\mu^{(B)})+L(Le^{LT}+1)\int_0^t W_p(\mu_\tau^{(A)},\theta_\tau)\,d\tau\right].
\end{align*}
Thus, since $W_p(\mu_t^{(A)},\mu^{\theta}_t)\ge W_p(\delta_0,\mu^{\theta}_t)-W_p(\mu_t^{(A)},\delta_0)=\mathrm{m}_p^{1/p}(\mu^\theta_t)-\mathrm{m}_p^{1/p}(\mu_t^{(A)})$, we have
\begin{align*}
\mathrm{m}_p^{1/p}(\mu^\theta_t)\le&\mathrm{m}_p^{1/p}(\mu_t^{(A)})+2^{\frac{p-1}{p}}\left[e^{LT}W_p(\mu^{(A)},\mu^{(B)})+LD\int_0^t \mathrm{m}^{1/p}_p(\mu_\tau^{(A)})\,d\tau+LD\int_0^t\mathrm{m}_p^{1/p}(\theta_\tau)\,d\tau\right]\\
\le&(1+LTD)\sup_{t\in[0,T]}\mathrm{m}_p^{1/p}(\mu_t^{(A)})+2^{\frac{p-1}{p}}\left[e^{LT}W_p(\mu^{(A)},\mu^{(B)})+LD\int_0^t\mathrm{m}_p^{1/p}(\theta_\tau)\,d\tau\right],
\end{align*}
where we denoted with $D=Le^{LT}+1$.
As in the proof of Corollary \ref{cor:exist},  without loss of generality we can assume that $0\le 2^{\frac{p-1}{p}}LDT<1$. The general case will follow by concatenating finitely many pieces of admissible curves defined on time-subintervals of sufficiently small length.
We take $R>0$ sufficiently large such that
\[R\ge \dfrac{\displaystyle(1+LTD)\sup_{t\in[0,T]}\mathrm{m}_p^{1/p}(\mu_t^{(A)})+2^{\frac{p-1}{p}}e^{LT}W_p(\mu^{(A)},\mu^{(B)})}{1-2^{\frac{p-1}{p}}LDT}\ge\mathrm{m}_p^{1/p}(\mu^{(B)}) ,\]
and such that $\mathrm{m}_p^{1/p}(\theta_t)\le R$ for all $t\in [0,T]$ and also $\mathrm{m}_p^{1/p}(\mu^\theta_t)\le R$
for all $t\in [0,T]$.
Define a sequence $\{\boldsymbol\mu^{(n)}=\{\mu^{(n)}_t\}_{t\in [0,T]}\}_{n\in\mathbb N}\subseteq C^0([0,T];\mathscr P_p(\mathbb R^d))$ by setting
$\boldsymbol\mu^{(0)}$ to be the constant $\mu^{(B)}$ and $\boldsymbol\mu^{(n)}$ to be equal to $\boldsymbol{\mu^{\theta}}$
with $\boldsymbol\theta=\boldsymbol\mu^{(n-1)}$. Notice that $\mu^{(n)}_0=\mu^{(B)}$ for all $n\in\mathbb N$.
According to Lemma \ref{lemma:moment}, the family $\{\boldsymbol\mu^{(n)}\}_{n\in\mathbb N}$ is relatively compact, thus
up to passing to a subsequence, we may assume that it converges to $\boldsymbol\mu^{\infty}\in C^0([0,T];\mathscr P_p(\mathbb R^d))$. Since $\boldsymbol\mu^{(n)}\in \Upsilon_F(\mu^{(B)},\boldsymbol\mu^{(n-1)})$, by recalling the u.s.c.
of $\Upsilon_F(\cdot,\cdot)$ proved in Proposition \ref{prop:uscsol}, we obtain that $\boldsymbol\mu^{\infty}\in \Upsilon_F(\mu^{(B)},\boldsymbol\mu^{\infty})$, i.e.,
$\boldsymbol{\mu^\infty}$ is an admissible trajectory, starting from $\mu^{(B)}$.
Finally, by passing to the limit in
\begin{align*}
W_p(\mu_t^{(A)},\mu^{(n)}_t)\le& 2^{\frac{p-1}{p}}\left[e^{LT}W_p(\mu^{(A)},\mu^{(B)})+LD\int_0^t W_p(\mu_\tau^{(A)},\mu^{(n-1)}_\tau)\,d\tau\right],
\end{align*}
we have
\begin{align*}
W_p(\mu_t^{(A)},\mu^{\infty}_t)\le& 2^{\frac{p-1}{p}}\left[e^{LT}W_p(\mu^{(A)},\mu^{(B)})+LD\int_0^t W_p(\mu_\tau^{(A)},\mu^{\infty}_\tau)\,d\tau\right],
\end{align*}
and, by Gr\"onwall's Lemma,
\begin{align*}
W_p(\mu_t^{(A)},\mu^{\infty}_t)\le& 2^{\frac{p-1}{p}}e^{\hat D T} W_p(\mu^{(A)},\mu^{(B)}),
\end{align*}
as desired, where $\hat D=L(2+Le^{LT})$. The proof is concluded by setting $\boldsymbol{\mu^{(B)}}=\boldsymbol{\mu^{\infty}}$. The last assertion
trivially follows.
\end{proof}

\begin{lemma}[Initial velocity set]\label{lemma:invelset}
Assume Hypothesis \ref{HP} for $F$ and let $\Xi(\cdot,\cdot)$ be as in Definition \ref{def:bas}. Let $\mu\in\mathscr P_p(\mathbb R^d)$.
\begin{enumerate}
\item Given any Borel selection $v_\mu:\mathbb R^d\to\mathbb R^d$ of $F(\mu,\cdot)$, there exists $\boldsymbol\eta\in\mathscr P(\mathbb R^d\times\Gamma_T)$ such that, set $\mu_t=e_t\sharp\boldsymbol\eta$ for $t\in [0,T]$,
we have $\boldsymbol\mu=\{\mu_t\}_{t\in [0,T]}\in \mathcal A^p_{[0,T]}(\mu)$, $\boldsymbol\eta\in \Xi(\mu,\boldsymbol\mu)$ and
\[\left|\dfrac{\gamma(t)-\gamma(0)}{t}-v_\mu(x)\right|\le Le^{Lt}\left[\dfrac{1}{t}\int_0^tW_p(\mu_\tau,\mu)\,d\tau+\frac{t}{2}|v_\mu(x)|\right]\]
for $\boldsymbol\eta$-a.e. $(x,\gamma)\in\mathbb R^d\times\Gamma_T$.
\item Given any admissible trajectory $\boldsymbol\mu=\{\mu_t\}_{t\in [0,T]}\in \mathcal A^p_{[0,T]}(\mu)$, there exists $\boldsymbol\eta\in \Xi(\mu,\boldsymbol\mu)$ such that
$\mu_t=e_t\sharp\boldsymbol\eta$ and for $\boldsymbol\eta$-a.e. $(x,\gamma)\in\mathbb R^d\times\Gamma_T$ we have
\[\lim_{t\to 0^+}\int_{\mathbb R^d\times\Gamma_T}d^p_{F(\mu,x)}\left(\dfrac{\gamma(t)-\gamma(0)}{t}\right)\,d\boldsymbol\eta(x,\gamma)=0.\]
\end{enumerate}
\end{lemma}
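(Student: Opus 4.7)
My plan for Part~(1) combines Filippov's theorem with a Kakutani--Ky Fan fixed-point argument in the spirit of Corollary~\ref{cor:exist} and Theorem~\ref{thm:filippov}. For each $x\in\mathbb R^d$ I introduce the affine \emph{trial curve} $\xi_x(t):=x+tv_\mu(x)$ (not in general an admissible trajectory, since $v_\mu(x)$ need not belong to $F(\mu_t,\xi_x(t))$ for $t>0$). For any candidate $\boldsymbol\theta=\{\theta_t\}_{t\in[0,T]}\in C^0([0,T];\mathscr P_p(\mathbb R^d))$ I define $R^{\boldsymbol\theta}:\mathbb R^d\rightrightarrows\Gamma_T$ by
$$R^{\boldsymbol\theta}(x):=\Bigl\{\gamma\in S^{\boldsymbol\theta}(x)\,:\,|\gamma(t)-\xi_x(t)|\le Le^{Lt}\Bigl[\int_0^t W_p(\theta_\tau,\mu)\,d\tau+\tfrac{t^2}{2}|v_\mu(x)|\Bigr]\ \forall\,t\in[0,T]\Bigr\}.$$
Applying Filippov's theorem (\cite[Theorem~10.4.1]{AuF}) to the trial curve $\xi_x$ against $\dot\gamma\in F(\theta_t,\gamma)$, and using $v_\mu(x)\in F(\mu,x)$ together with the Lipschitz property of $F$ to bound $d_{F(\theta_\tau,\xi_x(\tau))}(v_\mu(x))\le L(W_p(\theta_\tau,\mu)+\tau|v_\mu(x)|)$, shows $R^{\boldsymbol\theta}(x)\ne\emptyset$. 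Its values are closed inside the compact sets $S^{\boldsymbol\theta}(x)$, and the closed-graph / measurability argument used for the analogous map $R^{\boldsymbol\theta}$ in the proof of Theorem~\ref{thm:filippov} adapts here to give a Borel selection $h_{\boldsymbol\theta}:\mathbb R^d\to\Gamma_T$.

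Setting $\boldsymbol\eta^{\boldsymbol\theta}:=\mu\otimes\delta_{h_{\boldsymbol\theta}(x)}$ and $\boldsymbol\mu^{\boldsymbol\theta}:=\{e_t\sharp\boldsymbol\eta^{\boldsymbol\theta}\}_{t\in[0,T]}\in\Upsilon_F(\mu,\boldsymbol\theta)$, I then reduce to $LT<1/2$ (the general case follows by concatenation on finitely many sub-intervals) and choose $R>0$ so that the set $\mathcal C(R)$ from Corollary~\ref{cor:exist} is compact, convex and invariant. Passing to the natural convex-valued extension (allowing any $\boldsymbol\eta\in\Xi(\mu,\boldsymbol\theta)$ concentrated on $\{(x,\gamma):\gamma\in R^{\boldsymbol\theta}(x)\}$) and using the upper semicontinuity of $\Upsilon_F$ from Proposition~\ref{prop:uscsol} together with the closedness of the Filippov constraint, Kakutani--Ky Fan produces a fixed point $\boldsymbol\mu\in\Upsilon_F(\mu,\boldsymbol\mu)$. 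By the Superposition Principle (Proposition~\ref{prop:SP}) $\boldsymbol\mu$ is an admissible trajectory, and dividing the Filippov inequality built into $R^{\boldsymbol\mu}$ by $t$ yields the claimed pointwise bound $\boldsymbol\eta$-a.e.

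For Part~(2), I represent $\boldsymbol\mu$ by some $\boldsymbol\eta\in\Xi(\mu,\boldsymbol\mu)$ via Proposition~\ref{prop:SP}, so that for $\boldsymbol\eta$-a.e.\ $(x,\gamma)$ the curve $\gamma$ is AC with $\gamma(0)=x$ and $\dot\gamma(s)\in F(\mu_s,\gamma(s))$ for a.e.\ $s$, whence $(\gamma(t)-x)/t=\tfrac{1}{t}\int_0^t\dot\gamma(s)\,ds$. Convexity of $F(\mu,x)$ makes $d^p_{F(\mu,x)}$ convex (composition of the convex map $d_{F(\mu,x)}$ with the convex nondecreasing $r\mapsto r^p$ on $[0,\infty)$), so Jensen's inequality gives
$$d^p_{F(\mu,x)}\Bigl(\frac{\gamma(t)-\gamma(0)}{t}\Bigr)\le\frac{1}{t}\int_0^t d^p_{F(\mu,x)}(\dot\gamma(s))\,ds.$$
Using the Lipschitz property of $F$ with $\dot\gamma(s)\in F(\mu_s,\gamma(s))$, together with $(a+b)^p\le 2^{p-1}(a^p+b^p)$, yields $d^p_{F(\mu,x)}(\dot\gamma(s))\le 2^{p-1}L^p(W_p^p(\mu_s,\mu)+|\gamma(s)-x|^p)$. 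Integrating against $\boldsymbol\eta$ and applying Fubini, the Lipschitz-in-time bounds of Lemma~\ref{lemma:moment} give $W_p(\mu_s,\mu)=O(s)$ and $\int|\gamma(s)-\gamma(0)|^p\,d\boldsymbol\eta\le s^p\int\|\dot\gamma\|_\infty^p\,d\boldsymbol\eta=O(s^p)$; hence the integrand is $O(s^p)$ and its average over $[0,t]$ is $O(t^p)\to 0$ as $t\to 0^+$.

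The main obstacle is in Part~(1): checking that the Filippov-constrained set-valued map (or its convex-valued extension) has the compactness, convexity, and upper-semicontinuity properties required by Kakutani--Ky Fan, while keeping the Borel measurability of the selections $h_{\boldsymbol\theta}$ that encode the prescribed initial velocity $v_\mu$. Once this is in place, Part~(2) is a routine application of Jensen's inequality and the Lipschitz continuity of $F$, given the moment and speed estimates of Lemma~\ref{lemma:moment}.
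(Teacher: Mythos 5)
Your Part (2) is exactly the paper's argument (superposition via Proposition \ref{prop:SP}, Jensen's inequality for the distance to the convex set $F(\mu,x)$, the Lipschitz estimate $d_{F(\mu,x)}(\dot\gamma(s))\le L(W_p(\mu_s,\mu)+|\gamma(s)-x|)$, then the $L^p_{\boldsymbol\eta}$ bound from Lemma \ref{lemma:moment}), and your Part (1) construction coincides with the paper's up to the last step: same affine trial curves $x+tv_\mu(x)$, same Filippov bound $d_{F(\theta_\tau,\xi_x(\tau))}(v_\mu(x))\le L(W_p(\theta_\tau,\mu)+\tau|v_\mu(x)|)$, same constrained map $R^{\boldsymbol\theta}$, Borel selection $h_{\boldsymbol\theta}$, product measure $\mu\otimes\delta_{h_{\boldsymbol\theta}(x)}$, and the reduction to $LT\le 1/2$. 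Where you diverge is in closing the self-consistency loop: the paper does \emph{not} use Kakutani--Ky Fan here. It runs a successive-approximation scheme $\boldsymbol\mu^{(0)}\equiv\mu$, $\boldsymbol\mu^{(n)}=\boldsymbol{\mu^{\theta}}$ with $\boldsymbol\theta=\boldsymbol\mu^{(n-1)}$, extracts a limit $(\boldsymbol\mu^{\infty},\boldsymbol\eta^{\infty})$ by the compactness of Lemma \ref{lemma:moment}, identifies $\boldsymbol\mu^{\infty}\in\Upsilon_F(\mu,\boldsymbol\mu^{\infty})$ using only the upper semicontinuity of the \emph{unconstrained} map $\Upsilon_F$ (Proposition \ref{prop:uscsol}), and then recovers the quantitative estimate in the limit pointwise, along sequences of support points of $\boldsymbol\eta^{(n)}$ converging to a fixed $(x,\gamma)$, so that $v_\mu(x)$ enters only as a fixed vector.

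This difference is not cosmetic: your route has a genuine gap exactly at the point you flag as ``the main obstacle''. The upper semicontinuity (closed graph) that Kakutani--Ky Fan requires for the Filippov-constrained, convex-valued map is justified in your sketch by ``the closedness of the Filippov constraint'', but the constraint defining $R^{\boldsymbol\theta}$ involves the selection $v_\mu$, which is only Borel: both the trial curve $\xi_x(t)=x+tv_\mu(x)$ and the bound $\tfrac{t^2}{2}|v_\mu(x)|$ depend on $v_\mu(x)$, so the set $\{(x,\gamma):\gamma\in R^{\boldsymbol\theta}(x)\}$ is Borel but in general \emph{not} closed in $\mathbb R^d\times\Gamma_T$, and it is not jointly closed in $(\boldsymbol\theta,x,\gamma)$ either. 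Consequently a narrow limit of measures concentrated on these sets need not be concentrated on the limiting set, and the closed-graph argument that works for the map $R^{\boldsymbol\theta}$ in Theorem \ref{thm:filippov} (whose defining inequality involves only continuous data, namely $W_p(\mu^{(A)}_\tau,\theta_\tau)$) does not transfer. Measurable selection survives (this is why the paper can still define $h_{\boldsymbol\theta}$ via \cite[Theorem 8.2.9]{AuF}), but the u.s.c. needed for the fixed-point theorem is not established, and without it your fixed point does not come with a representing $\boldsymbol\eta$ carried by the $R^{\boldsymbol\mu}$-graph. The paper's iteration avoids this entirely: it needs closedness only of the unconstrained differential-inclusion constraint (handled in Proposition \ref{prop:uscsol} via convexity and closedness of the images of $F$), and the $v_\mu$-dependent inequality is passed to the limit only for fixed $x$, where $v_\mu(x)$ is a constant. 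To repair your argument you would either have to restrict to continuous selections $v_\mu$ (or approximate via Lusin's theorem), or abandon the fixed-point formulation in favor of the iteration, which is precisely what the paper does.
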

\begin{proof}$ $\par
\textbf{1. }Without loss of generality, we may assume $LT\le 1/2$, the general case will be obtained concatenating $\boldsymbol\mu$ with any other admissible trajectory starting from $\mu_T$.
Let $v_0:\mathbb R^d\to\mathbb R^d$ be any Borel selection of $F(\mu,\cdot)$.
Define $\gamma_x:[0,T]\to\mathbb R^d$ by $\gamma_x(t)=x+v_0(x)\cdot t$, and observe that $x\mapsto \gamma_x$ is a Borel map.
Let $\boldsymbol\theta=\{\theta_t\}_{t\in [0,T]}\in C^0([0,T];\mathscr P_p(\mathbb R^d))$ such that $\theta_0=\mu$, and notice that
\[d_{F(\theta_t,\gamma_x(t))}(\dot\gamma_x(t))\le L\left[W_p(\theta_t,\mu)+t|v_0(x)|\right].\]
Thus, by Filippov's Theorem (see \cite[Theorem 10.4.1]{AuF}) the set-valued map $R^{\boldsymbol\theta}:\mathbb R^d\rightrightarrows \Gamma_T$ defined as
\[R^{\boldsymbol\theta}(x):=\left\{\xi\in S^{\boldsymbol\theta}(x):\, \begin{array}{l}|\gamma_x(t)-\xi(t)|\le Le^{Lt}\int_0^t\left[W_p(\theta_\tau,\mu)+\tau|v_0(x)|\right]\,d\tau\\\textrm{for all }t\in[0,T]\end{array}\right\}\]
has nonempty images for every $x\in\mathbb R^d$. Notice that this set-valued map has closed images and it is Borel measurable by \cite[Theorem 8.2.9]{AuF}, thus it admits a Borel selection $h_{\boldsymbol\theta}:\mathbb R^d\to \Gamma_T$.
Set $\boldsymbol{\eta^\theta}=\mu\otimes\delta_{h_{\boldsymbol\theta}(x)}$ and $\boldsymbol{\mu^\theta}=\{\mu^\theta_t\}_{t\in [0,T]}$, $\mu^\theta_t=e_t\sharp\boldsymbol{\eta^\theta}$.
By construction we have $\boldsymbol{\mu^\theta}\in \Upsilon_F(\mu,\boldsymbol\theta)$, moreover for all $x\in\mathbb R^d$ we have
\begin{align*}
\mathrm{m}_p^{1/p}(\mu^\theta_t)=&\left(\int_{\mathbb R^d}\left|h_{\boldsymbol\theta}(x)(t)\right|^p\,d\mu(x)\right)^{1/p}\\
\le&\left(\int_{\mathbb R^d}\left|h_{\boldsymbol\theta}(x)(t)-\gamma_x(t)\right|^p\,d\mu(x)\right)^{1/p}+\left(\int_{\mathbb R^d}\left|\gamma_x(t)\right|^p\,d\mu(x)\right)^{1/p}\\
\le&Le^{Lt}\left[\int_{\mathbb R^d}\left|\int_0^t\left(W_p(\theta_\tau,\mu)+\tau|v_0(x)|\right)\,d\tau\right|^p\,d\mu\right]^{1/p}+\mathrm{m}_p^{1/p}(\mu)+t\|v_0\|_{L^p_\mu}\\
\le&Le^{Lt}\int_0^tW_p(\theta_\tau,\mu)\,d\tau+\mathrm{m}_p^{1/p}(\mu)+(LTe^{Lt}+t)\|v_0\|_{L^p_\mu}\\
\le&Le^{Lt}\int_0^t\mathrm{m}_p^{1/p}(\theta_\tau)\,d\tau+(1+Le^{Lt}t)\mathrm{m}_p^{1/p}(\mu)+(LTe^{Lt}+t)\|v_0\|_{L^p_\mu}.
\end{align*}
Furthermore,
\[\left|\dfrac{h_{\boldsymbol\theta}(x)(t)-h_{\boldsymbol\theta}(0)}{t}-v_0(x)\right|=\left|\dfrac{h_{\boldsymbol\theta}(x)(t)-\gamma_x(t)}{t}\right|\le Le^{Lt}\dfrac{1}{t}\int_0^t\left[W_p(\theta_\tau,\mu)+\tau|v_0(x)|\right]\,d\tau.\]
Choose  
\[R\ge\dfrac{(1+LTe^{LT})\mathrm{m}_p^{1/p}(\mu)+(LTe^{LT}+T)\|v_0\|_{L^p_\mu}}{1-LTe^{LT}}\ge \mathrm{m}_p^{1/p}(\mu),\]
and notice that if $\mathrm{m}_p^{1/p}(\theta_t)\le R$ for all $t\in [0,T]$, then $\mathrm{m}_p^{1/p}(\mu^\theta_t)\le R$ for all $t\in [0,T]$.
Define sequences $\{\boldsymbol\mu^{(n)}=\{\mu^{(n)}_t\}_{t\in [0,T]}\}_{n\in\mathbb N}\subseteq C^0([0,T];\mathscr P_p(\mathbb R^d))$ and $\{\boldsymbol\eta^{(n)}\}_{n\in\mathbb N}\subseteq \mathscr P(\mathbb R^d\times\Gamma_T)$
by setting $\boldsymbol\mu^{(0)}$ to be the constant $\mu$, $\boldsymbol\mu^{(n)}$ and $\boldsymbol{\eta^{(n)}}$ to be equal to $\boldsymbol{\mu^{\theta}}$ and $\boldsymbol{\eta^{\theta}}$, respectively,
with $\boldsymbol\theta=\boldsymbol\mu^{(n-1)}$ for all $n\in\mathbb N$.
According to Lemma \ref{lemma:moment}, the families $\{\boldsymbol\mu^{(n)}\}_{n\in\mathbb N}$ and $\{\boldsymbol{\eta^{(n)}}\}_{n\in\mathbb N}$ are relatively compact, thus
up to passing to a subsequence, we may assume that the sequences converge to $\boldsymbol\mu^{\infty}=\{\mu^\infty_t\}_{t\in[0,T]}\in C^0([0,T];\mathscr P_p(\mathbb R^d))$ and to $\boldsymbol{\eta^{\infty}}\in\mathscr P(\mathbb R^d\times\Gamma_T)$,
with $\mu^{\infty}_t=e_t\sharp\boldsymbol{\eta^\infty}$ for all $t\in[0,T]$.
Since $\boldsymbol\mu^{(n)}\in \Upsilon_F(\mu,\boldsymbol\mu^{(n-1)})$ ($\Upsilon_F(\cdot,\cdot)$ defined in Definition \ref{def:bas}), by recalling the u.s.c.
of $\Upsilon_F(\cdot,\cdot)$ proved in Proposition \ref{prop:uscsol}, we obtain that $\boldsymbol\mu^{\infty}\in \Upsilon_F(\mu,\boldsymbol\mu^{\infty})$, i.e.,
$\boldsymbol{\mu^\infty}$ is an admissible trajectory, starting from $\mu$.
Recall that for $\boldsymbol{\eta^{\infty}}$-a.e. $(x,\gamma)$ there exists a sequence $\{(x_n,\xi_n)\}_{n\in\mathbb N}\subseteq\mathbb R^d\times\Gamma_T$ converging to $(x,\gamma)$ such that $(x_n,\xi_n)\in\mathrm{supp}\,\boldsymbol{\eta^{(n)}}$.
Thus, without loss of generality, we may assume for all $t\in[0,T]$
\[\left|\dfrac{\xi_n(t)-\xi_n(0)}{t}-v_0(x)\right|\le Le^{Lt}\dfrac{1}{t}\int_0^t\left[W_p(\mu^{(n-1)}_\tau,\mu)+\tau|v_0(x)|\right]\,d\tau,\]
and, by passing to the limit,
\[\left|\dfrac{\gamma(t)-\gamma(0)}{t}-v_0(x)\right|\le Le^{Lt}\dfrac{1}{t}\int_0^t\left[W_p(\mu^\infty_\tau,\mu)+\tau|v_0(x)|\right]\,d\tau.\]
\textbf{2. }Recall the Superposition Principle in Proposition \ref{prop:SP} and the definitions of $\Upsilon_F$ and $\Xi$ in Definition \ref{def:bas}. Then, from the assumption it follows that there exists $\boldsymbol\eta\in\Xi(\mu,\boldsymbol\mu)$ such that $\mu_t=e_t\sharp\boldsymbol\eta$ for all $t\in [0,T]$.
By Jensen's inequality, for $\boldsymbol\eta$-a.e. $(x,\gamma)\in\mathbb R^d\times\Gamma_T$, we have
\begin{align*}
d_{F(\mu,x)}\left(\dfrac{\gamma(t)-\gamma(0)}{t}\right)=&d_{F(\mu,x)}\left(\dfrac{1}{t}\int_{0}^t \dot\gamma(s)\,ds\right)\le
\dfrac{1}{t}\int_{0}^t d_{F(\mu,x)}\left(\dot\gamma(s)\right)\,ds\\ 
\le&\dfrac{L}{t}\int_{0}^t \left(W_p(\mu_s,\mu)+|\gamma(s)-x|\right)\,ds\\ 
\le&\dfrac{L}{t}\int_{0}^t \left(W_p(\mu_s,\mu)+\|\dot\gamma\|_{L^{\infty}_{\boldsymbol\eta}}s\right)\,ds.
\end{align*}
We conclude by taking the $L^p_{\boldsymbol\eta}$-norm and using Lemma \ref{lemma:moment}.
\end{proof}

\section{Generalized targets}\label{sec:gentarget}

In this section, we provide the generalized notion of target set in the space of probability measures, thus extending in a natural way the classical concept of target set in $\mathbb R^d$.
A naive physical interpretation of the generalized target can be given as follows: to describe the state of the system, an observer chooses to measure 
some quantities $\phi$. The results of the measurements are the \emph{averages} of the quantities $\phi$ with respect to the measure $\mu_t$, representing the state of
the system at time $t$. Our aim is to steer the system to states where the result of such measurements is below a fixed threshold (that, without loss of generality, we assume to be $0$). 
The following result provides a characterization of the class of such generalized target. 

\begin{lemma}
Let $\tilde S\subseteq \mathscr P(\mathbb R^d)$ be nonempty.
Then, $\tilde S$ is $w^*$-closed and convex if and only if there exists a family $\Phi\subseteq C^0_b(\mathbb R^d)$ such that $\tilde S$ can be written as follows
\begin{equation}\label{eq:defSgen}
\tilde S=\left\{\mu\in\mathscr P(\mathbb R^d):\,\int_{\mathbb R^d}\varphi(x)\,d\mu(x)\le 0\textrm{ for all }\varphi\in\Phi\right\}.
\end{equation}
\end{lemma}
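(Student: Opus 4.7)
The plan is to establish the two directions separately. The forward (``if'') direction is immediate from the definition of the weak-$*$ topology, while the converse rests on Hahn--Banach separation in a suitably chosen locally convex ambient space.

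For the forward direction, assume $\tilde S$ has the form \eqref{eq:defSgen}. For each fixed $\varphi \in \Phi \subseteq C^0_b(\mathbb R^d)$ the evaluation $\mu \mapsto \int_{\mathbb R^d}\varphi\,d\mu$ is affine and, by the very definition of the $w^*$-topology on $\mathscr P(\mathbb R^d)$, continuous; hence the sublevel set $\{\mu \in \mathscr P(\mathbb R^d) : \int \varphi\,d\mu \le 0\}$ is $w^*$-closed and convex. The set $\tilde S$, as an intersection of such sublevel sets, is therefore $w^*$-closed and convex.

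For the converse, I would embed $\mathscr P(\mathbb R^d)$ into the real vector space $E$ of finite signed Borel measures on $\mathbb R^d$ equipped with the locally convex Hausdorff topology $\sigma(E, C^0_b(\mathbb R^d))$, whose topological dual is canonically identified with $C^0_b(\mathbb R^d)$ via $\mu \mapsto \int \varphi\,d\mu$. The conditions defining $\mathscr P(\mathbb R^d)$ inside $E$, namely $\int 1\,d\mu = 1$ (with $1 \in C^0_b$) and $\int \psi\,d\mu \ge 0$ for every nonnegative $\psi \in C^0_b$, are $w^*$-closed, so $\mathscr P(\mathbb R^d)$ is $w^*$-closed in $E$; in particular $\tilde S$ is $w^*$-closed and convex in $E$. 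For any $\nu \in \mathscr P(\mathbb R^d) \setminus \tilde S$, I would apply the strict form of the geometric Hahn--Banach theorem to the disjoint pair consisting of the closed convex set $\tilde S$ and the compact convex singleton $\{\nu\}$: this produces $\tilde\varphi_\nu \in C^0_b(\mathbb R^d)$ and reals $\alpha_\nu < \beta_\nu$ with
\[
\int_{\mathbb R^d}\tilde\varphi_\nu\,d\mu \;\le\; \alpha_\nu \;<\; \beta_\nu \;\le\; \int_{\mathbb R^d}\tilde\varphi_\nu\,d\nu \qquad \text{for every } \mu \in \tilde S.
\]
Setting $\varphi_\nu := \tilde\varphi_\nu - \alpha_\nu \in C^0_b(\mathbb R^d)$ and using that every probability measure has unit mass, one obtains $\int \varphi_\nu\,d\mu \le 0$ for every $\mu \in \tilde S$ while $\int \varphi_\nu\,d\nu \ge \beta_\nu - \alpha_\nu > 0$. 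Letting $\Phi := \{\varphi_\nu : \nu \in \mathscr P(\mathbb R^d) \setminus \tilde S\}$, the inclusion $\tilde S \subseteq \{\mu : \int \varphi\,d\mu \le 0 \text{ for all } \varphi \in \Phi\}$ holds by construction, and the reverse inclusion follows because each $\nu \notin \tilde S$ violates its own defining inequality for $\varphi_\nu$.

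The main conceptual step is to set up the Hahn--Banach argument in the right ambient space: I need the separating functional to be automatically of the form $\mu \mapsto \int \varphi\,d\mu$ with $\varphi \in C^0_b(\mathbb R^d)$, which is what makes the topology $\sigma(E, C^0_b(\mathbb R^d))$ the correct choice (its dual being exactly $C^0_b(\mathbb R^d)$). The other essential but routine ingredient is the translation of $\tilde\varphi_\nu$ by the constant $\alpha_\nu$ so as to normalize the separation threshold to $0$; this manoeuvre is legitimate precisely because we are restricted to probability measures, where adding a constant to the integrand shifts the integral by the same constant independently of $\mu$.
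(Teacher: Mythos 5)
Your proof is correct. The forward direction coincides with the paper's. For the converse, the paper does not run the separation argument by hand: it invokes formula (5.1.7) of \cite{AGS} (Remark 5.1.2), which says that for a convex, $w^*$-closed set $\tilde S$ one has $\bar\mu\in\tilde S$ if and only if $\int\psi\,d\bar\mu\le C_\psi:=\sup_{\mu\in\tilde S}\int\psi\,d\mu$ for every $\psi\in C^0_b(\mathbb R^d)$, and then simply takes $\Phi=\{\psi-C_\psi:\ \psi\in C^0_b(\mathbb R^d),\ C_\psi<+\infty\}$, using exactly the same ``shift by a constant'' trick you use, legitimate because all measures involved have unit mass. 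Your argument reconstructs that cited duality result from scratch: you embed $\mathscr P(\mathbb R^d)$ in the signed measures $E$ with the topology $\sigma(E,C^0_b)$, identify the dual with $C^0_b$, and strictly separate each $\nu\notin\tilde S$ from $\tilde S$, obtaining one function $\varphi_\nu$ per excluded measure rather than one function per $\psi$ with finite support value. What your route buys is self-containedness and transparency about where Hahn--Banach enters; what it costs is that the auxiliary facts you only assert (that the dual of $(E,\sigma(E,C^0_b))$ is exactly $C^0_b$, and that the constraints $\int 1\,d\mu=1$, $\int\psi\,d\mu\ge0$ for all $\psi\ge0$ cut out precisely $\mathscr P(\mathbb R^d)$ in $E$, which uses inner regularity of finite Borel measures) would need to be spelled out or referenced, whereas the paper's citation of \cite{AGS} packages all of this in one line. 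A small remark: the closedness of $\mathscr P(\mathbb R^d)$ in $E$ is not even needed, since relative $w^*$-closedness of $\tilde S$ in $\mathscr P(\mathbb R^d)$ already gives $\overline{\tilde S}^{E}\cap\mathscr P(\mathbb R^d)=\tilde S$, so you may separate $\nu$ from $\overline{\tilde S}^{E}$ directly; also, when $\tilde S=\mathscr P(\mathbb R^d)$ your family $\Phi$ is empty, which still satisfies \eqref{eq:defSgen} vacuously (or take $\Phi=\{-1\}$ if one prefers a nonempty family).
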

\begin{proof}
We first prove the necessity, so let $\tilde S$ be as in \eqref{eq:defSgen} for some fixed $\Phi\subseteq C^0_b(\mathbb R^d)$. Then, the convexity of $\tilde S$ comes by linearity of the integral w.r.t. the measure, while the closure in $w^*$ topology follows immediately since $\Phi$ is a family of test functions for $w^*$-convergence.\par
We pass to the proof of the sufficiency.
Recalling formula (5.1.7) in \cite[Remark 5.1.2]{AGS}, we have that $\bar \mu\in\tilde S$ if and only if for all $\psi\in C^0_b(\mathbb R^d)$ it holds
\[\int_{\mathbb R^d}\psi(x)\,d\bar\mu(x)\le \sup_{\mu\in\tilde S}\int_{\mathbb R^d}\psi(x)\,d\mu(x).\]
Given $\psi\in C^0_b(\mathbb R^d)$, set 
\[C_{\psi}:=\sup_{\mu\in\tilde S}\int_{\mathbb R^d}\psi(x)\,d\mu(x)\le +\infty.\]
Then we have that $\bar \mu\in\tilde S$ if and only if for all $\psi\in C^0_b(\mathbb R^d)$ such that $C_{\psi}<+\infty$ it holds
\[\int_{\mathbb R^d}\left[\psi(x)-C_{\psi}\right]\,d\bar\mu(x)\le 0.\]
Then, to get \eqref{eq:defSgen} it sufficies to take
\[\Phi:=\left\{\varphi:=\psi-C_{\psi}:\, \psi\in C^0_b(\mathbb R^d)\textrm{ and }C_{\psi}<+\infty\right\}.\]
\end{proof}

\begin{definition}[Generalized targets]\label{def:gentar}
Let $\tilde S\subseteq\mathscr P(\mathbb R^d)$ be nonempty $w^*$-closed and convex, $\Phi\subseteq C^0_b(\mathbb R^d)$.
We say that $\tilde S$ is a \emph{generalized target generated by} $\Phi$, and write $\tilde S=\tilde S^\Phi$ if  
\begin{equation}\label{eq:gentar}\tilde S:=\left\{\mu\in\mathscr P(\mathbb R^d):\,\int_{\mathbb R^d}\varphi(x)\,d\mu(x)\le 0\textrm{ for all }\varphi\in\Phi\right\}.\end{equation}
Given $p\ge 1$ we set $\tilde S^\Phi_p=\tilde S^\Phi\cap \mathscr P_p(\mathbb R^d)$, and we define the \emph{generalized distance} from 
$\tilde S_p^{\Phi}$ to be the $1$-Lipschitz continuous map given by $\displaystyle\tilde d_{\tilde S_p^\Phi}(\cdot):=\inf_{\mu\in \tilde S_p^\Phi}W_p(\cdot,\mu)$.
\end{definition}

\begin{remark}\label{rem:smoothphi}
\begin{itemize}\item[]
\item In Definition \ref{def:gentar} we can equivalently assume that $\Phi$ is a set of continuous bounded functions, or bounded Lipschitz functions, or even just l.s.c. functions bounded from below. 
Moreover, without loss of generality, we can always assume that $\Phi$ is convex.
Indeed, assume that $\Psi$ is a set of l.s.c. functions bounded from below. For all $\psi\in \Psi$ and $k\in\mathbb N\setminus\{0\}$ we define
a Lipschitz continuous bounded map $\varphi_{k}^{\psi}:\mathbb R^d\to\mathbb R$ by setting
\[\varphi_{k}^{\psi}(x):=\min\left\{\inf_{y\in\mathbb R^d}\left\{\psi(y)+k|x-y|\right\},k\right\}.\]
We recall that $\{\varphi_k^{\psi}\}_{k\in\mathbb N}$ is an increasing sequence of bounded Lipschitz functions bounded from below and pointwise converging to $\psi$.
Hence, by Monotone Convergence Theorem, we have
\[\sup_{\psi\in \Psi}\int_{\mathbb R^d}\psi(x)\,d\mu(x)=\sup_{\psi\in \Psi}\int_{\mathbb R^d}\sup_{k\in\mathbb N}\varphi^\psi_k(x)\,d\mu(x)=
\sup_{k\in\mathbb N}\sup_{\psi\in \Psi}\int_{\mathbb R^d}\varphi^\psi_k(x)\,d\mu(x)=\sup_{\varphi\in \Phi}\int_{\mathbb R^d}\varphi(x)\,d\mu(x),\]
where $\Phi=\{\varphi^\psi_k:\,k\in\mathbb N\setminus\{0\},\,\psi\in \Psi\}$.
Replacing $\Phi$ with its convex hull does not change anything due to the linearity of the integral operator.
\item Since convergence in $W_p(\cdot,\cdot)$ implies $w^*$-convergence, if $\tilde S^{\Phi}$ is a generalized target, then $\tilde S^\Phi_p$ is closed and convex in $\mathscr P_p(\mathbb R^d)$ endowed with the $p$-Wasserstein metric $W_p(\cdot,\cdot)$.
\item We notice that if there exists $\bar x\in \mathbb R^d$ such that $\varphi(\bar x)\le 0$ for all $\varphi\in \Phi$ then the set $\tilde S$ given by \eqref{eq:gentar} is nonempty, since $\delta_{\bar x}\in\tilde S$. 
\end{itemize}
\end{remark}

The last condition of Remark \ref{rem:smoothphi} is indeed not necessary to have the nontriviality of $\tilde S$.

\begin{example}\label{ex:nontrivial}
For every $y\in\mathbb R$, $\varepsilon>0$, define 
\[\varphi^\varepsilon_y(x)=\begin{cases}
-(x+y)^2+\varepsilon,\textrm{ if }|x+y|\le 1,\\ -1+\varepsilon,\textrm{ if }|x+y|\ge 1.
\end{cases}\]
and set $\Phi_{\varepsilon}:=\{\varphi_{y}^\varepsilon:\,y\in\mathbb R\}$.
Clearly, we have that $\varphi^\varepsilon_y$ attains its maximum at $x=-y$ and the value of the maximum is $\varepsilon>0$. Thus the sufficient condition 
of the last assertion in Remark \ref{rem:smoothphi} is violated. For $0<\varepsilon\le \dfrac{1}{12}$ sufficiently small we have 
\[\int_{-1/2}^{1/2}\varphi^\varepsilon_y(x)\,dx\le \int_{-1/2}^{1/2}\varphi^\varepsilon_0(x)\,dx=\varepsilon-\dfrac{1}{12}\le 0,\]
thus the measure $\chi_{[-1/2,1/2]}\mathscr L^1\in \tilde S$. 
\par\medskip\par
Indeed, by the translation invariance of the problem, we have that $\mu_a:=\chi_{[a,a+1]}\mathscr L^1\in \tilde S$ for all $a\in\mathbb R$,
in particular, we have that $\tilde S$ is not tight, hence not $w^*$-compact, since for any $K\subseteq\mathbb R$ it is possible to find $a\in\mathbb R$
such that $\mu_a(\mathbb R\setminus K)=1$.
\end{example}

\begin{lemma}[Compactness]
Let $\tilde S$ be a nonempty generalized target generated by the family $\Phi\subseteq C^0(\mathbb R^d)$.
If there exists $\bar\phi\in\Phi$, $A,C>0$ and $p\ge 1$ such that $\bar \phi(x)\ge A|x|^p-C$, 
then $\tilde S^{\Phi}=\tilde S_p^{\Phi}$ is compact in the $w^*$-topology and in the $W_p$-topology.
\end{lemma}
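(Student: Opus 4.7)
The plan is to split the claim into three consecutive assertions and prove them in this order, since each uses the previous: (i) $\tilde S^\Phi=\tilde S_p^\Phi$, (ii) $\tilde S^\Phi$ is $w^*$-compact, (iii) $\tilde S_p^\Phi$ is $W_p$-compact.

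For (i) I would test the defining inequality of the generalized target on $\bar\phi$: every $\mu\in\tilde S^\Phi$ satisfies
\[
A\,\mathrm{m}_p(\mu)-C\le \int_{\mathbb R^d}\bar\phi(x)\,d\mu(x)\le 0,
\]
hence $\mathrm{m}_p(\mu)\le C/A$. In particular $\tilde S^\Phi\subseteq \mathscr P_p(\mathbb R^d)$, which gives $\tilde S^\Phi=\tilde S_p^\Phi$, and this estimate is uniform in $\mu\in\tilde S^\Phi$.

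For (ii) I would deduce tightness from the uniform $p$-moment bound via Markov's inequality:
\[
\mu(\mathbb R^d\setminus\overline{B(0,R)})\le \frac{\mathrm{m}_p(\mu)}{R^p}\le \frac{C}{A\,R^p}\to 0
\]
uniformly in $\mu\in\tilde S^\Phi$ as $R\to\infty$. Since the preceding lemma already gives the $w^*$-closedness of $\tilde S^\Phi$, Prokhorov's theorem then yields $w^*$-compactness.

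For (iii) I would take a sequence $\{\mu_n\}\subseteq\tilde S_p^\Phi$, extract via (ii) a subsequence with $\mu_n\rightharpoonup^*\mu\in\tilde S^\Phi=\tilde S_p^\Phi$, and upgrade narrow convergence to $W_p$-convergence via the standard characterization (see e.g.\ \cite[Proposition 7.1.5]{AGS}): $W_p(\mu_n,\mu)\to 0$ amounts to narrow convergence together with $\mathrm{m}_p(\mu_n)\to\mathrm{m}_p(\mu)$. The bound $\mathrm{m}_p(\mu)\le \liminf_n \mathrm{m}_p(\mu_n)$ is Fatou applied to the non-negative lower semi-continuous map $x\mapsto |x|^p$. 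The reverse inequality is the main obstacle: I would exploit the pointwise estimate $A|x|^p\le \bar\phi(x)+C$ together with $\int \bar\phi\,d\mu_n\le 0$, combining narrow convergence against continuous cut-offs of $\bar\phi$ (which Remark \ref{rem:smoothphi} permits) with lower semi-continuity of $\bar\phi$ on the tails, to show that no $p$-mass can escape to infinity along the subsequence. Controlling these tails uniformly is the delicate point, since a bare uniform bound on $\mathrm{m}_p$ is in general not enough for $W_p$-compactness; what makes the argument succeed is the additional rigidity imposed by the integral constraint $\int\bar\phi\,d\mu\le 0$, which forces the escaping mass to contribute a non-negligible amount to $\int\bar\phi\,d\mu_n$ and thus to be ruled out in the narrow limit.
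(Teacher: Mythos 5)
Your steps (i) and (ii) are correct and coincide with the paper's own argument: testing the defining inequality on $\bar\phi$ gives $A\,\mathrm{m}_p(\mu)\le C$ for every $\mu\in\tilde S^{\Phi}$, hence $\tilde S^{\Phi}=\tilde S_p^{\Phi}$ with $p$-moments uniformly bounded by $C/A$; this bound gives tightness, and together with the $w^*$-closedness from the preceding characterization lemma it gives $w^*$-compactness.

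The genuine gap is in step (iii), and it cannot be closed as sketched: the ``rigidity'' you attribute to the constraint $\int\bar\phi\,d\mu\le 0$ is not there, because in the borderline case $\bar\phi(x)=A|x|^p-C$ this constraint is nothing more than the uniform moment bound you already have, and a uniform bound on $\mathrm{m}_p$ does not yield the uniform integrability of the $p$-moments that $W_p$-compactness requires (cf.\ \cite[Proposition 7.1.5]{AGS}). Concretely, take $\Phi=\{\bar\phi\}$ with $\bar\phi(x)=A|x|^p-C$ and $\mu_n=\left(1-\tfrac1n\right)\delta_0+\tfrac1n\,\delta_{x_n}$ with $|x_n|=(nC/A)^{1/p}$: then $\int\bar\phi\,d\mu_n=0$, so $\mu_n\in\tilde S^{\Phi}$, and $\mu_n\rightharpoonup\delta_0$ narrowly with $\delta_0\in\tilde S^{\Phi}$, yet $W_p^p(\mu_n,\delta_0)=\mathrm{m}_p(\mu_n)=C/A$ for every $n$, so no subsequence converges in $W_p$; the escaping mass is perfectly compatible with the integral constraint and is not ``ruled out in the narrow limit''. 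To make step (iii) work one needs $\bar\phi$ to grow strictly faster than $|x|^p$ (e.g.\ $\bar\phi(x)\ge A|x|^q-C$ with $q>p$), which does give uniform integrability of $|x|^p$ over $\tilde S^{\Phi}$ and hence $W_p$-compactness; alternatively, your argument as it stands already yields compactness of $\tilde S^{\Phi}$ in $W_q$ for every $q<p$. Note, for comparison, that the paper's proof passes this very point by asserting that the $w^*$- and $W_p$-topologies coincide on a set with uniformly bounded $p$-moments; the example above shows that this assertion requires the same strengthening of the growth hypothesis, so the obstruction you ran into in (iii) is intrinsic to the statement as formulated and not a defect of your particular route.
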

\begin{proof}
Trivially we have that $\tilde S_p^{\Phi}\subseteq\tilde S^{\Phi}$ for any $p\ge 1$. Conversely, given $\mu\in \tilde S^{\Phi}$, we have 
\[A\cdot\mathrm{m}_p(\mu)-C\le\int_{\mathbb R^d}\bar\phi(x)\,d\mu\le 0,\]
hence $\mu\in\tilde S_p^{\Phi}$ and all the measures in $\tilde S_p^{\Phi}=\tilde S^{\Phi}$ have $p$-moments uniformly bounded by $C/A$.
This means that the $w^*$-topology and $W_p$-topology coincide on $\tilde S^{\Phi}=\tilde S_p^{\Phi}$, which turns out to be tight, according to \cite[Remark 5.1.5]{AGS}, 
and $w^*$-closed, hence $w^*$-compact and $W_p$-compact.
\end{proof}

We mention the following example, which may be relevant for the applications.

\begin{example}
Given a nonempty and closed set $S\subseteq\mathbb R^d$ and $\alpha\ge 0$, a natural choice for $\Phi$ can be for example $\Phi_\alpha=\{d_S(\cdot)-\alpha\}$. 
If $\alpha=0$ we have that $\tilde S^{\Phi_0}=\{\mu\in\mathscr P(\mathbb R^d):\, \mu(\mathbb R^d\setminus S)=0\}$.
More generally, for all $r>0$ let $B_r(S):=\{z\in\mathbb R^d:\,d_S(z)\le r\}$.
Then, if $\mu\in\tilde S^{\Phi_\alpha}$,
\[r\mu(\mathbb R^d\setminus B_r(S))=\int_{\mathbb R^d\setminus B_r(S)}r\,d\mu\le \int_{\mathbb R^d\setminus B_r(S)}d_S(x)\,d\mu(x)\le \alpha,\]
thus, in particular, we must have $\mu(\mathbb R^d\setminus B_r(S))\le \min\left\{1,\dfrac{\alpha}{r}\right\}$ for all $r>0$,
which, if $\alpha$ is sufficiently small can be interpreted as a relaxed version of the case $\alpha=0$.
\end{example}

Given a generalized target $\tilde S\subseteq\mathscr P(\mathbb R^d)$, a natural question is wheter it is possible to \emph{localize} it, i.e., to describe it as the set of all the measures 
supported a certain (closed) subset of $\mathbb R^d$. Equivalently, we want to find a nonempty closed set $S\subseteq\mathbb R^d$, such that, set $\Phi=\{d_S(\cdot)\}$, we have $\tilde S=\tilde S^{\Phi}$.
To this aim, we give the following definition.

\begin{definition}[Classical counterpart of generalized target]\label{def:cctar}
Let $\tilde S\subseteq\mathscr P(\mathbb R^d)$ be a generalized target. Given a set $S\subseteq\mathbb R^d$, 
we say that $S$ is a \emph{classical counterpart of the generalized target} $\tilde S$
if 
\[\tilde S=\{\mu\in\mathscr P(\mathbb R^d):\,\mathrm{supp}\,\mu\subseteq S\}.\]
An analogous definition is given for the classical counterpart of $\tilde S\cap \mathscr P_p(\mathbb R^d)$, $p\ge 1$ by taking intersection of the right hand side with $\mathscr P_p(\mathbb R^d)$.
\end{definition}

\begin{remark}
\begin{itemize}
\item[]
\item From the very definition of classical counterpart, if $\tilde S$ admits $S$ and $S'$ as classical counterparts, then $S=S'$.
\item In general a classical counterpart may not exists: in $\mathbb R$, take $\Phi=\{\phi\}$ where $\phi:\mathbb R\to \mathbb R$, $\phi(y):=|y|-1$.
Defined $\mu_0:=\dfrac12\left(\delta_{0}+\delta_{2}\right)$, we have $\mu_0\in \tilde S_p^{\Phi}$ for every $p\ge 1$. If a classical counterpart $S$ of $\tilde S^{\Phi}$
would exists, by definition it should contain the support of $\mu_0$, i.e. $0,2\in S$. However, $\delta_2\notin \tilde S^{\Phi}$ even if $\mathrm{supp}(\delta_2)\subseteq S$.
So neither $\tilde S^{\Phi}$ nor $\tilde S_p^{\Phi}$ admit a classical counterpart.
\item If $S$ is the classical counterpart of $\tilde S^{\Phi}$ (or $\tilde S_p^{\Phi}$), there exists a representation of $\tilde S^{\Phi}$ as $\tilde S^{\hat \Phi}$, where 
$\hat\Phi=\{\hat\phi\}$ and $\hat\phi(x)\ge 0$ for every $x\in\mathbb R^d$ where the inequality is strict at every $x\notin S$.
In particular we can take $\hat\Phi=\{\arctan\circ d_S\}$ (resp. $\hat\Phi=\{d_S\}$), i.e., we can replace $\Phi$ with the set $\{\arctan\circ d_S\}$ (resp. $\{d_S\}$).
\end{itemize}
\end{remark}

Our aim is now to characterize the generalized target possessing a classical counterpart.

\begin{proposition}[Existence, uniqueness and properties of the classical counterpart]\label{prop:propcctar}
Let $\tilde S\subseteq\mathscr P(\mathbb R^d)$ be a generalized target, $S\subseteq\mathbb R^d$.
\begin{enumerate}
\item if $\tilde S$ admits $S$ as classical counterpart then $S$ is closed;
\item $\tilde S$ admits $S$ as classical counterpart if and only if 
\[\int_{\mathbb R^d}\left[\varphi(x)-\sup_{y\in S}\varphi(y)\right]\,d\mu(x)\le 0,\]
for all $\varphi\in C^0_b(\mathbb R^d)$ and $\mu\in \tilde S$;
\item if $\tilde S$ admits $S$ as classical counterpart, then $\tilde S_p$ admits $S$ as classical counterpart for all $p\ge 1$.
\item If $\tilde S=\tilde S^{\Phi}$ (resp. $\tilde S\cap \mathscr P_p(\mathbb R^d)=\tilde S_p^{\Phi}$), for a suitable $\Phi\subseteq C^0_b(\mathbb R^d)$, admits a classical counterpart $S$, then
\[S=\bigcap_{\phi\in \Phi}\{x\in \mathbb R^d:\, \phi(x)\le 0\}.\]
\end{enumerate}
\end{proposition}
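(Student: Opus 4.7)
My plan is to exploit the $w^*$-closedness of $\tilde S$ built into Definition \ref{def:gentar}. Given $\{x_n\}_{n\in\mathbb N}\subseteq S$ with $x_n\to x$, I observe that $\mathrm{supp}\,\delta_{x_n}=\{x_n\}\subseteq S$, so $\delta_{x_n}\in\tilde S$; since $\delta_{x_n}\to\delta_x$ in the $w^*$-topology, closedness gives $\delta_x\in\tilde S$, whence $x\in S$.

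\textbf{Part (2).} Necessity is direct: if $\mathrm{supp}\,\mu\subseteq S$, then $\varphi(x)\le\sup_{y\in S}\varphi(y)$ for $\mu$-a.e.\ $x$, and integration yields the inequality. For sufficiency, I plan a bump-function argument to force $\mathrm{supp}\,\mu\subseteq\overline{S}$ for every $\mu\in\tilde S$: if some $x_0\in\mathrm{supp}\,\mu\setminus\overline{S}$ existed, I would construct a nonnegative $\varphi\in C^0_b(\mathbb R^d)$ with $\varphi(x_0)=1$ and $\varphi\equiv 0$ outside a small ball around $x_0$ disjoint from $\overline{S}$; then $\sup_S\varphi=0$ while $\int\varphi\,d\mu>0$, a contradiction. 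Since $S$ enters the integral inequality only through continuous test functions, which have identical suprema on $S$ and $\overline{S}$, I can replace $S$ by $\overline{S}$, and combined with part (1) this gives $\tilde S\subseteq\{\mu:\mathrm{supp}\,\mu\subseteq S\}$. For the reverse inclusion I plan to invoke Hahn--Banach duality: the integral inequality exactly characterizes the $w^*$-closed convex hull of $\{\delta_y:y\in S\}$, which by the density assertion at the end of Definition \ref{def:Wpspace} coincides with $\{\mu:\mathrm{supp}\,\mu\subseteq S\}$, and this hull must then equal $\tilde S$.

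\textbf{Parts (3) and (4).} Claim (3) is immediate by intersecting both sides of $\tilde S=\{\mu:\mathrm{supp}\,\mu\subseteq S\}$ with $\mathscr P_p(\mathbb R^d)$. For (4), I test the defining condition against Dirac masses: if $x\in S$, then $\delta_x\in\tilde S=\tilde S^\Phi$, so $\varphi(x)=\int\varphi\,d\delta_x\le 0$ for every $\varphi\in\Phi$, yielding $S\subseteq\bigcap_{\varphi\in\Phi}\{\varphi\le 0\}$. Conversely, if $\varphi(x)\le 0$ for all $\varphi\in\Phi$, then $\delta_x\in\tilde S^\Phi=\tilde S$, and the classical counterpart property forces $\{x\}=\mathrm{supp}\,\delta_x\subseteq S$. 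The main obstacle I anticipate is the reverse inclusion in the sufficiency of (2): the integral hypothesis directly constrains only elements of $\tilde S$, so promoting $\tilde S\subseteq\{\mu:\mathrm{supp}\,\mu\subseteq S\}$ to an equality requires either a structural identification of $\tilde S$ with the $w^*$-closed convex hull of $\{\delta_y:y\in S\}$ or a careful bootstrapping from the Hahn--Banach dual characterization.
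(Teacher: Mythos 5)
Your parts (1), (3), (4) and the necessity half of (2) are correct and essentially coincide with the paper's arguments: for (1) the paper passes to the limit in $\varphi(x_n)\le 0$ for $\varphi\in\Phi$, while you use $w^*$-closedness of $\tilde S$ directly, which is the same substance; (3) and (4) are the paper's proofs verbatim in spirit.

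The genuine gap is the sufficiency half of (2), and you have in fact flagged it yourself without closing it. Two problems. First, the step ``replace $S$ by $\overline S$ and combine with part (1)'' is circular: part (1) assumes that $S$ \emph{is} a classical counterpart, which is exactly what you are trying to prove, so from the bump-function argument you only obtain $\mathrm{supp}\,\mu\subseteq\overline S$ for $\mu\in\tilde S$, not $\mathrm{supp}\,\mu\subseteq S$. Second, and more seriously, the reverse inclusion $\{\mu:\,\mathrm{supp}\,\mu\subseteq S\}\subseteq\tilde S$ is never established. The hypothesis, as you read it, only says that every $\mu\in\tilde S$ satisfies $\int\varphi\,d\mu\le\sup_S\varphi$; via the Hahn--Banach/AGS (5.1.7) characterization this yields $\tilde S\subseteq\overline{\mathrm{co}}^{\,w^*}\{\delta_y:\,y\in S\}$ and nothing more, so the final claim ``this hull must then equal $\tilde S$'' does not follow. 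Indeed, under this one-sided reading the implication is simply false: take $\tilde S=\{\delta_0\}$ (a generalized target) and $S=\overline{B(0,1)}$; then $\int\varphi\,d\delta_0=\varphi(0)\le\sup_S\varphi$ for every $\varphi\in C^0_b(\mathbb R^d)$, yet $S$ is not a classical counterpart of $\tilde S$. The paper's proof avoids this by reading the condition as a two-sided characterization of membership: by AGS (5.1.7), $\mu\in\tilde S$ if and only if $\int\varphi\,d\mu\le\sup_{\nu\in\tilde S}\int\varphi\,d\nu$ for all $\varphi\in C^0_b(\mathbb R^d)$, and it works with the identity ``$S$ is a classical counterpart iff $\tilde S=\overline{\mathrm{co}}^{\,w^*}\{\delta_x:\,x\in S\}$'', in which the inclusion you are missing is supplied precisely by the fact that all Dirac masses $\delta_y$, $y\in S$, belong to $\tilde S$ (by the classical-counterpart identity) together with convexity and $w^*$-closedness of $\tilde S$. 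So to repair your argument you must either adopt that stronger reading of the displayed inequality (membership in $\tilde S$ is \emph{equivalent} to it) or otherwise justify that $\delta_y\in\tilde S$ for every $y\in S$; the ``structural identification'' you mention is exactly the missing step, and as it stands the sufficiency direction is not proved.
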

\begin{proof}
\begin{enumerate}
\item[]
\item Assume that $\tilde S$ admits $S$ as a classical counterpart and $\tilde S=\tilde S^\Phi$ for a suitable $\Phi\in C^0_b(\mathbb R^d)$.
In particular, we have $\delta_x\in \tilde S$ for all $x\in S$, i.e. $\phi(x)\le 0$ for all $x\in S$.
Let $\{x_n\}_{n\in\mathbb N}$ be a sequence in $S$ converging to $x\in\mathbb R^d$. Then for all $\varphi\in\Phi$ we have $\varphi(x_n)\le 0$ for all $n\in\mathbb N$, 
which implies $\varphi(x)\le 0$, and so $\delta_x\in \tilde S$. Since $S$ is a classical counterpart of $\tilde S$ and $\mathrm{supp}\delta_x=\{x\}$, we have that thus $x\in S$, so $S$ is closed.
\item $\tilde S$ admits $S$ as classical counterpart if and only if $\tilde S=\overline{\mathrm{co}}\{\delta_x:\,x\in S\}$, where the closure is the weak$^*$ closure in $\mathscr P(\mathbb R^d)$.
Indeed, every measure supported in $S$ is $w^*$-limit of convex combinations of Dirac deltas concentrated in points of $S$, and conversely
all such deltas belong to $\tilde S$ by definition of classical counterpart, and $\tilde S$ is convex and $w^*$-closed.
Recalling formula (5.1.7) in \cite[Remark 5.1.2]{AGS}, we have that $\mu\in\tilde S$  if and only if
\[\int_{\mathbb R^d} \varphi(x)\,d\mu(x)\le \sup_{y\in S}\varphi(y),\]
for all $\varphi\in C^0_b$, as desired.
\item It is sufficient to use the same argument as in (2) but taking the intersection with $\mathscr P_p(\mathbb R^d)$ and the closure w.r.t. $W_p$ distance.
\item Trivially, if there exist $\bar x\in\mathbb R^d$ and $\varphi\in \Phi$ such that $\varphi(\bar x)>0$, then $\delta_{\bar x}\notin \tilde S$, thus $\bar x$ does not belong to the classical
counterpart of $\tilde S$. Conversely, if $\varphi(\bar x)\le 0$ for all $\varphi\in \Phi$, then $\delta_{\bar x}\in \tilde S$, and so $\bar x\in S$ by definition of classical counterpart.
\end{enumerate}
\end{proof}

A useful sufficient condition can be expressed as follows.

\begin{corollary}\label{cor:propcctar}
Assume that for every $\phi\in\Phi$ we have either $\phi(x)\geq 0$ or $\phi(x)\leq 0$ for all $x\in\mathbb R^d$.
Then $\tilde S^{\Phi}$ (and so $\tilde S_p^{\Phi}$) admits classical counterpart.
\end{corollary}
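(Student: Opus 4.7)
The natural candidate for the classical counterpart is dictated by item (4) of Proposition \ref{prop:propcctar}: define
\[S:=\bigcap_{\phi\in\Phi}\{x\in\mathbb R^d:\,\phi(x)\le 0\},\]
which is closed since each $\phi\in\Phi\subseteq C^0_b(\mathbb R^d)$ is continuous. My plan is to verify directly that with this choice we have $\tilde S^\Phi=\{\mu\in\mathscr P(\mathbb R^d):\,\mathrm{supp}\,\mu\subseteq S\}$; the corresponding statement for $\tilde S_p^\Phi$ will then follow immediately from Proposition \ref{prop:propcctar}(3).

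The key observation is that the sign hypothesis lets us split $\Phi=\Phi_+\cup\Phi_-$, where $\Phi_+$ collects those $\phi\in\Phi$ with $\phi\ge 0$ everywhere and $\Phi_-$ collects those with $\phi\le 0$ everywhere. For any $\phi\in\Phi_-$ the inequality $\int\phi\,d\mu\le 0$ is automatic for every $\mu\in\mathscr P(\mathbb R^d)$, and moreover $\{\phi\le 0\}=\mathbb R^d$, so these observables contribute nothing to either side of the desired equality. Hence I only need to handle the constraints coming from $\Phi_+$.

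For $\phi\in\Phi_+$ the condition $\int\phi\,d\mu\le 0$ is equivalent to $\int\phi\,d\mu=0$ (because the integrand is nonnegative), which in turn is equivalent to $\mu(\{\phi>0\})=0$. Continuity of $\phi$ makes $\{\phi>0\}$ open, so this last condition is equivalent to $\mathrm{supp}\,\mu\cap\{\phi>0\}=\emptyset$, i.e.\ $\mathrm{supp}\,\mu\subseteq\{\phi\le 0\}$. Intersecting over $\phi\in\Phi_+$ and combining with the trivial contribution of $\Phi_-$ yields
\[\mu\in\tilde S^\Phi\iff \mathrm{supp}\,\mu\subseteq\bigcap_{\phi\in\Phi_+}\{\phi\le 0\}=\bigcap_{\phi\in\Phi}\{\phi\le 0\}=S,\]
which is exactly the defining property of a classical counterpart. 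There is no real obstacle here: the whole argument rests on the elementary equivalence ``a nonnegative continuous function has zero $\mu$-integral iff its positivity set is disjoint from $\mathrm{supp}\,\mu$'', and the only technical care needed is to use continuity of each $\phi$ to pass from a $\mu$-null set to a condition on the (closed) support.
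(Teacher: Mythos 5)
Your proof is correct and takes essentially the same route as the paper: the same candidate $S=\bigcap_{\phi\in\Phi}\{x:\phi(x)\le 0\}$ and the same key fact that, for a nonnegative continuous observable, $\int\phi\,d\mu\le 0$ holds iff the (open) set $\{\phi>0\}$ is disjoint from $\mathrm{supp}\,\mu$. The only difference is organizational: the paper proves the inclusion $\tilde S^{\Phi}\subseteq\{\mu:\mathrm{supp}\,\mu\subseteq S\}$ by contradiction, via a point of positive $\mu$-mass outside $S$ and a nonnegative $\hat\phi$ positive there, whereas you run the equivalence directly after splitting $\Phi$ by sign, which also absorbs the paper's separate treatment of the trivial case where every $\phi\le 0$.
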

\begin{proof}
Denote by 
\[S=\bigcap_{\phi\in \Phi}\{x\in \mathbb R^d:\, \phi(x)\le 0\}.\]
If for all $\phi\in\Phi$ and $x\in\mathbb R^d$ we had $\phi(x)\le 0$, then we would trivially have $S=\mathbb R^d$ and $\tilde S^\Phi=\mathscr P(\mathbb R^d)$ as desired since
$\delta_x\in \tilde S^{\Phi}$ for all $x\in\mathbb R^d$, thus concluding with the thesis.

Otherwise, let $\mu\in\tilde S^{\Phi}$ and suppose by contradiction that $\mu(\mathbb R^d\setminus S)>0$. Thus there exists $y\in \mathbb R^d\setminus S$ of density $1$ w.r.t. $\mu$.
In particular, there exists a neighborhood $A_y$ of $y$ contained in $\mathbb R^d\setminus S$ such that $\mu(A_y)>0$.
If for all $\varphi\in \Phi$ we had $\varphi(y)\le 0$, we would have $y\in S$, contradicting the fact that $y\notin S$. 
So, according to the assumptions, there exists $\hat\phi\in\Phi$ such that $\hat\phi(x)\ge 0$ for all $x\in\mathbb R^d$ and such that $\hat \phi(y)>0$. Thus we have
\[\sup_{\phi\in\Phi}\int_{\mathbb R^d} \phi(x)\,d\mu(x)\ge \int_{\mathbb R^d} \hat \phi(x)\,d\mu(x)\ge \int_{A_y} \hat\phi(x)\,d\mu(x)>0,\]
hence $\mu\notin \tilde S^\Phi$, leading to a contradiction. Thus $\tilde S^{\Phi}\subseteq\{\mu\in\mathscr P(\mathbb R^d):\, \mathrm{supp}\,\mu\subseteq S\}$.
Since the converse inclusion is always true, equality holds.
\end{proof}

\begin{remark}
The condition of Corollary \ref{cor:propcctar} is not necessary in general.
In $\mathbb R$, take $\Phi=\{\phi_1,\phi_2,\phi_3\}$ where $\phi_i:\mathbb R\to \mathbb R$, $i=1,2,3$ are 
defined to be $\phi_1(x)=\min\{\max\{x,0\},1\}$, $\phi_2(x)=\min\{\max\{-x,-1\},0\}$, $\phi_3(x)=\min\{\max\{x,-1\},1\}$.
Then both $\tilde S_p^{\Phi}$ and $\tilde S^{\Phi}$ admits $S$ as their classical counterpart, with $S=]-\infty,0]$, 
but $\phi_3$ changes its sign.
\end{remark}

We are now ready to state some comparison results between the generalized distance and the classical one. 

\begin{proposition}[Comparison with classical distance]\label{prop:compclasdist}
Let $p\ge 1$, $\mu_0\in \mathscr P_p(\mathbb R^d)$, $\Phi\subseteq C^0_b(\mathbb R^d;\mathbb R)$ be such that $\tilde S_p^{\Phi}\ne\emptyset$, and
define
\begin{equation}\label{eq:clco}S:=\bigcap_{\phi\in \Phi}\{x\in\mathbb R^d:\,\phi(x)\le 0\}.\end{equation}
Then $\tilde d_{\tilde S_p^\Phi}(\mu_0)\le \|d_S\|_{L^p_{\mu_0}}$, and equality holds if and only if the generalized target $\tilde S^\Phi_p$ admits classical
counterpart. In this last case, the classical counterpart of $\tilde S^\Phi_p$ is $S$, moreover $\tilde d_{\tilde S_p^\Phi}^p:\mathscr P_p(\mathbb R^d)\to [0,+\infty[$ is convex.
\end{proposition}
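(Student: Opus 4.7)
The plan is to establish the inequality via a measurable quasi-projection onto $S$, and then derive the two directions of the equivalence, interpreted as: equality holds for every $\mu_0\in\mathscr P_p(\mathbb R^d)$ if and only if $\tilde S_p^\Phi$ admits a classical counterpart. (This universal reading is needed because at $\mu_0=\delta_x$ with $x\in S$ the two sides trivially vanish regardless of whether a counterpart exists.) Convexity will then fall out of an explicit affine representation of the squared distance.

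For the inequality I first observe that $S$ is closed in $\mathbb R^d$, being an intersection of sublevel sets of continuous functions. If $S=\emptyset$ then $d_S\equiv+\infty$ and both the bound and the ``iff'' are vacuous (no classical counterpart can exist, since by Proposition \ref{prop:propcctar}(4) it would have to coincide with $S$). Assuming $S\ne\emptyset$, for every $\varepsilon>0$ I apply a Borel measurable selection theorem (e.g.\ \cite[Theorem 8.2.9]{AuF} or Kuratowski--Ryll-Nardzewski) to the closed-valued map $x\rightrightarrows\{y\in S:|x-y|\le d_S(x)+\varepsilon\}$ to produce a Borel map $\pi_\varepsilon:\mathbb R^d\to S$ satisfying $|x-\pi_\varepsilon(x)|\le d_S(x)+\varepsilon$ pointwise. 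The push-forward $\nu_\varepsilon:=\pi_\varepsilon\sharp\mu_0$ is supported in $S$, so $\int\phi\,d\nu_\varepsilon\le 0$ for every $\phi\in\Phi$ and $\nu_\varepsilon\in\tilde S^\Phi$; its $p$-moment is finite whenever $\|d_S\|_{L^p_{\mu_0}}<+\infty$ (otherwise there is nothing to prove). The transport plan $(\mathrm{id},\pi_\varepsilon)\sharp\mu_0\in\Pi(\mu_0,\nu_\varepsilon)$ yields $W_p^p(\mu_0,\nu_\varepsilon)\le\int(d_S(x)+\varepsilon)^p\,d\mu_0(x)$, and sending $\varepsilon\downarrow 0$ gives $\tilde d_{\tilde S_p^\Phi}(\mu_0)\le\|d_S\|_{L^p_{\mu_0}}$.

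For the ``if'' direction of the equivalence, Proposition \ref{prop:propcctar}(4) identifies the classical counterpart (when it exists) with $S$, so every $\nu\in\tilde S_p^\Phi$ has $\mathrm{supp}\,\nu\subseteq S$ and every $\boldsymbol\pi\in\Pi(\mu_0,\nu)$ is concentrated on $\mathbb R^d\times S$; consequently $|x-y|\ge d_S(x)$ $\boldsymbol\pi$-a.e.\ and $\int|x-y|^p\,d\boldsymbol\pi\ge\|d_S\|^p_{L^p_{\mu_0}}$, and taking infima produces the reverse inequality. For the ``only if'' direction, assuming equality for every $\mu_0$, I plug in an arbitrary $\mu\in\tilde S_p^\Phi$: the left-hand side vanishes, so $\|d_S\|_{L^p_\mu}=0$, giving $\mathrm{supp}\,\mu\subseteq S$ by closedness of $S$. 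Combined with the trivial reverse inclusion this yields $\tilde S_p^\Phi=\{\mu\in\mathscr P_p(\mathbb R^d):\mathrm{supp}\,\mu\subseteq S\}$, exhibiting $S$ as the classical counterpart. Convexity of $\tilde d^p_{\tilde S_p^\Phi}$ is then immediate from its affine representation $\mu\mapsto\int d_S^p\,d\mu$. The one delicate point in the whole argument is the Borel selection for $\pi_\varepsilon$: it reduces to verifying that the $\varepsilon$-projection map is lower semicontinuous with closed nonempty images, which follows from the continuity of $d_S(\cdot)$, after which everything else is a routine manipulation with transport plans and supports.
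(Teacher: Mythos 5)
Your argument is correct and follows essentially the same route as the paper: you build a measurable (near-)projection onto $S$ and push $\mu_0$ forward to get a competitor in $\tilde S_p^\Phi$, you use the support characterization of the classical counterpart (together with Proposition \ref{prop:propcctar}) for the ``if'' direction, you read ``equality'' universally in $\mu_0$ and plug in measures of $\tilde S_p^\Phi$ for the ``only if'' direction (the paper states this as: without a counterpart the inequality is strict at some $\mu\in\tilde S_p^\Phi$, which is the same statement in contrapositive form), and convexity comes from the affine representation $\mu\mapsto\int d_S^p\,d\mu$.

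The only point that needs repair is your justification of the selection step. The map $x\rightrightarrows\{y\in S:\,|x-y|\le d_S(x)+\varepsilon\}$ is in general \emph{not} lower semicontinuous: take $S=\{0,2\}\subset\mathbb R$, $\varepsilon=1$; at $x=1/2$ the value is $\{0,2\}$, but for $x<1/2$ it is $\{0\}$, so the point $y=2$ cannot be approximated. What is true, and suffices, is that this multifunction has closed graph (continuity of $d_S$ and closedness of $S$) and compact values, hence it is measurable and admits a Borel selection by the standard measurable selection theorems you cite. Alternatively, and more simply, you can dispense with the $\varepsilon$-relaxation altogether: since $S$ is closed and nonempty, \cite[Corollary 8.2.13]{AuF} directly provides a Borel map $g:\mathbb R^d\to S$ with $|x-g(x)|=d_S(x)$, which is exactly what the paper uses; with this exact projection the limit $\varepsilon\downarrow 0$ is unnecessary and the rest of your proof goes through verbatim.
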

\begin{proof}
If $S=\emptyset$ we have $d_S(x)\equiv +\infty$ at all $x\in\mathbb R^d$ so the statement is trivially true, thus suppose $S\ne\emptyset$.
Since $S$ is closed and nonempty, \cite[Corollary 8.2.13]{AuF} implies
the existence of a Borel map $g:\mathbb R^d\to S$ such that $|x-g(x)|=d_S(x)$.
We have
\begin{equation*}
\mathrm{m}^{1/p}_p(g\sharp\mu_0)=\|g\|_{L^p_{\mu_0}}\le
\|\mathrm{Id}_{\mathbb R^d}-g\|_{L^p_{\mu_0}}+\|\mathrm{Id}_{\mathbb
R^d}\|_{L^p_{\mu_0}}\le \|d_S\|_{L^p}+\mathrm{m}_p^{1/p}(\mu_0)<+\infty,
\end{equation*}
moreover, for all $\phi\in\Phi$, we have
\begin{equation*}
\int_{\mathbb R^d}\phi(x)\,dg\sharp\mu_0(x)=\int_{\mathbb
R^d}\phi(g(y))\,d\mu_0(y)\le 0,
\end{equation*}
since $g(y)\in S$ for all $y\in\mathbb R^d$ and so $\phi\circ g(y)\le 0$
for all $y\in\mathbb R^d$.
Therefore, $g\sharp\mu_0\in\tilde S^\Phi_p$, and so
\[\tilde d^p_{\tilde S^\Phi}(\mu_0)\le W^p_p(\mu_0,g\sharp\mu_0)\le
\|\mathrm{Id}_{\mathbb R^d}-g\|^p_{L^p_{\mu_0}}=\|d_S\|^p_{L^p_{\mu_0}}.\]
\par\medskip\par
Assume now that $\tilde S^\Phi_p$ admits classical counterpart. As noticed in Proposition \ref{prop:propcctar}, $S$ must be the classical counterpart of $\tilde S^\Phi_p$.
For every $\nu_0\in \tilde S^{\Phi}_p$ we have thus $\mathrm{supp}\,\nu_0\subseteq S$ and hence $|x-y|\ge d_S(x)$ for all $\pi\in \Pi(\mu_0,\nu_0)$ and $\pi$-a.e. $(x,y)\in\mathbb R^d\times\mathbb R^d$.
This leads to
\[\iint_{\mathbb R^d\times\mathbb R^d}|x-y|^p\,d\pi(x,y)\ge \int_{\mathbb R^d}d_S^p(x)\,d\mu_0(x).\]
By taking the infimum on $\pi\in \Pi(\mu_0,\nu_0)$ and then on $\nu_0\in \tilde S^{\Phi}_p$, we obtain $\tilde d_{\tilde S_p^\Phi}(\mu_0)\ge \|d_S\|_{L^p_{\mu_0}}$,
thus equality holds.
\par\medskip\par
Without the assumption of existence of a classical counterpart for $\tilde S^{\Phi}_p$, 
the inequality $\tilde d_{\tilde S_p^\Phi}(\mu_0)\le \|d_S\|_{L^p_{\mu_0}}$ is strict.
Indeed, since $\tilde S^{\Phi}_p$ does not admit $S$ as a classical counterpart, there exist a measure $\mu\in\tilde S_p^\Phi$ and $n\in\mathbb N$ such that 
\[\mu\left(\left\{z\in\mathbb R^d:\, d_S(z)>\dfrac{1}{n}\right\}\right)>0,\]
and so there exists a Borel set $A\subseteq\mathbb R^d$ and $\varepsilon>0$ such that $d^p_S(z)\ge \varepsilon$ for $\mu$-a.e. $z\in A$, $\mu(A)>0$.
This implies
\[0=\tilde d^p_{\tilde S^{\Phi}_p}(\mu)<\varepsilon\mu(A)\le \int_A\,d^p_S(z)\,d\mu(z)\le \int_{\mathbb R^d}d^p_S(x)\,d\mu(x).\]
Finally, the last statement is trivial, and it follows from the fact that 
\[\tilde d^p_{\tilde S_p^\Phi}(\mu)=\int_{\mathbb R^d}d_S^p(x)\,d\mu,\] 
is linear in $\mu$.
\end{proof}

Without the $p$-th power, the generalized distance in the case of the Proposition \ref{prop:compclasdist} above may fail to be convex.

\begin{example}
Let $p>1$. In $\mathbb R^2$, consider $P=(0,0)$, $Q_1=(1,0)$, $Q_2=\left(0,2^{1/p}\right)$. Set $S=\{P\}$, $\Phi=\{d_S(\cdot)\}$, hence $\tilde S_p^\Phi:=\left\{\delta_P\right\}$,
and define $\nu_{\lambda}=\lambda\delta_{Q_1}+(1-\lambda)\delta_{Q_2}$, $\lambda\in[0,1]$.
By Proposition \ref{prop:compclasdist}, we have
\[\tilde d^p_{\tilde S_p^\Phi}(\nu_{\lambda})=W_p^p(\delta_P,\nu_{\lambda})=\lambda+2(1-\lambda)=2-\lambda,\]
whence $\tilde d_{\tilde S_p^\Phi}(\nu_{\lambda})=\sqrt[p]{2-\lambda}$, which is not convex.
\end{example}

In the metric space $\mathscr P_p(\mathbb R^d)$ endowed with the $W_p$-distance, another concept of convexity can be given,
related more to the metric structure rather than to the linear one inherited by the set of all Borel signed measures.

\medskip

Given any product space $X^N$ ($N\geq 1$), in the following we denote with $\mathrm{pr}^i\colon X^N\to X$ the projection on the $i$--th component, i.e., $\mathrm{pr}^i(x_1,\ldots, x_N)=x_i$.

\begin{definition}[Geodesics]
Given a curve $\boldsymbol\mu=\{\mu_t\}_{t\in [0,1]}\subseteq\mathscr P_p(\mathbb R^d)$, we say that it is a \emph{(constant speed) geodesic} if for all $0\leq s\leq t\leq 1$
we have
\[W_p(\mu_s,\mu_t)=(t-s)W_p(\mu_0,\mu_1).\]
In this case, we will also say that the curve $\boldsymbol\mu$ is a \emph{geodesic connecting $\mu_0$ and $\mu_1$}.
\end{definition}

\begin{theorem}[Characterization of geodesics] 
Let $\mu_0,\mu_1\in\mathscr P_p(\mathbb R^d)$ and let $\pi\in\Pi_o^p(\mu_0,\mu_1)$ be an optimal 
transport plan between $\mu_0$ and $\mu_1$, i.e.
\[W_p^p(\mu_0,\mu_1)=\iint_{\mathbb R^d\times\mathbb R^d} |x_1-x_2|^p\,d\pi(x_1,x_2)\,.\]
Then the curve $\boldsymbol\mu=\{\mu_t\}_{t\in [0,1]}$ defined by
\begin{equation}\label{eq:charact_geodesic}
\mu_t:=\big((1-t)\,\mathrm{pr}^1+t\,\mathrm{pr}^2\big)\sharp \pi~\in~\mathscr P_p(\mathbb R^d),
\end{equation}
is a (constant speed) geodesic connecting $\mu_0$ and $\mu_1$.

Conversely, any (constant speed) geodesic  $\boldsymbol\mu=\{\mu_t\}_{t\in [0,1]}$ connecting $\mu_0$ and $\mu_1$ admits 
the representation~\eqref{eq:charact_geodesic} for a suitable plan $\pi\in\Pi_o^p(\mu_0,\mu_1)$.
\end{theorem}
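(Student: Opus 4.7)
The plan splits into the two implications.

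\emph{Direct part.} Set $T_t(x_1, x_2) := (1-t) x_1 + t x_2$, so that \eqref{eq:charact_geodesic} reads $\mu_t = T_t \sharp \pi$. Since $(T_s, T_t) \sharp \pi \in \Pi(\mu_s, \mu_t)$, testing $W_p$ against this coupling yields $W_p^p(\mu_s, \mu_t) \le \int |T_t - T_s|^p\, d\pi = (t-s)^p W_p^p(\mu_0, \mu_1)$. Applied on the subintervals $[0,s]$ and $[t,1]$, this gives $W_p(\mu_0,\mu_s) \le s\, W_p(\mu_0,\mu_1)$ and $W_p(\mu_t,\mu_1) \le (1-t)\, W_p(\mu_0,\mu_1)$, which combined with the triangle inequality $W_p(\mu_0,\mu_1) \le W_p(\mu_0,\mu_s) + W_p(\mu_s,\mu_t) + W_p(\mu_t,\mu_1)$ forces $W_p(\mu_s,\mu_t) \ge (t-s) W_p(\mu_0,\mu_1)$, closing the direct part.

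\emph{Converse at a single time $s$.} Fix $s \in (0,1)$ and take optimal plans $\pi^{0s} \in \Pi_o^p(\mu_0, \mu_s)$ and $\pi^{s1} \in \Pi_o^p(\mu_s, \mu_1)$. By the Gluing Lemma \cite[Lemma 5.3.2]{AGS}, pick $\boldsymbol\gamma \in \mathscr P(\mathbb R^{3d})$ with $\mathrm{pr}^{1,2}\sharp\boldsymbol\gamma = \pi^{0s}$ and $\mathrm{pr}^{2,3}\sharp\boldsymbol\gamma = \pi^{s1}$, and set $\pi := \mathrm{pr}^{1,3}\sharp\boldsymbol\gamma \in \Pi(\mu_0, \mu_1)$. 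Combining the pointwise triangle inequality with Minkowski in $L^p(\boldsymbol\gamma)$ yields
\begin{equation*}
W_p(\mu_0, \mu_1) \le \|x_0 - x_1\|_{L^p(\boldsymbol\gamma)} \le \|x_0 - x_s\|_{L^p(\boldsymbol\gamma)} + \|x_s - x_1\|_{L^p(\boldsymbol\gamma)} = s\, W_p(\mu_0, \mu_1) + (1-s)\, W_p(\mu_0, \mu_1),
\end{equation*}
the final equality being the constant-speed hypothesis, so every step must be an equality. This gives $\pi \in \Pi_o^p(\mu_0, \mu_1)$; triangle saturation forces $x_s \in [x_0, x_1]$ for $\boldsymbol\gamma$-a.e. $(x_0, x_s, x_1)$; and, for $p>1$, the Minkowski equality case gives the proportionality $|x_0 - x_s| = \lambda |x_s - x_1|$ $\boldsymbol\gamma$-a.e., with $\lambda$ pinned to $s/(1-s)$ by integrating and using the constant-speed identities. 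Combining collinearity with this ratio gives $x_s = (1-s) x_0 + s x_1$ $\boldsymbol\gamma$-a.e., hence $\mu_s = T_s \sharp \pi$.

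\emph{Uniformity in $t$ (main obstacle).} The genuine difficulty is that the same $\pi$ must serve every $t \in [0,1]$. I would iterate the previous construction, producing for each $n \in \mathbb N$ a measure $\boldsymbol\gamma_n \in \mathscr P(\mathbb R^{(n!+1)d})$ obtained by gluing optimal couplings between consecutive pairs $\mu_{i/n!}$ and $\mu_{(i+1)/n!}$ for $i = 0, \dots, n!-1$; the analogous chain of inequalities forces $\pi_n := \mathrm{pr}^{1,\, n!+1}\sharp\boldsymbol\gamma_n \in \Pi_o^p(\mu_0, \mu_1)$ together with $\mu_{i/n!} = T_{i/n!}\sharp \pi_n$ for every $i = 0, \dots, n!$. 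Since $\Pi(\mu_0, \mu_1)$ is narrowly compact (prescribed marginals give tightness via Prokhorov) and $\Pi_o^p(\mu_0,\mu_1)$ is narrowly closed by lower semicontinuity of the transport cost, along a subsequence $\pi_n \rightharpoonup \pi \in \Pi_o^p(\mu_0, \mu_1)$. Every rational $q \in [0,1]$ satisfies $q = i/n!$ for all $n$ large enough, and narrow continuity of the pushforward under the continuous map $T_q$ yields $\mu_q = T_q \sharp \pi_n \rightharpoonup T_q \sharp \pi$, hence $\mu_q = T_q \sharp \pi$; finally, the $W_p$-continuity of both $t \mapsto \mu_t$ and $t \mapsto T_t \sharp \pi$ extends the identity from the dense set of rationals to every $t \in [0,1]$.
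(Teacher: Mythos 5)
Your argument is correct, and it does genuinely more than the paper, which offers no proof at all for this statement: it simply cites \cite[Theorem 7.2.2]{AGS}. Your direct part is the standard coupling-plus-triangle-inequality computation. For the converse, the single-time step (gluing two optimal plans, forcing equality throughout the chain $W_p(\mu_0,\mu_1)\le\|x_0-x_1\|_{L^p(\boldsymbol\gamma)}\le\|x_0-x_s\|_{L^p}+\|x_s-x_1\|_{L^p}$, then extracting optimality of $\pi$, collinearity, and the pinned ratio $s/(1-s)$) is sound, and your way of obtaining a single plan valid for \emph{all} times — gluing along the refining grids $\{i/n!\}$, then passing to a narrow limit using tightness of $\Pi(\mu_0,\mu_1)$ and closedness of $\Pi_o^p(\mu_0,\mu_1)$, and finally a density/continuity argument in $t$ — is a legitimate alternative to the route in \cite{AGS}, which instead fixes one intermediate time and identifies the whole geodesic via a restriction/uniqueness argument for the induced plans; your version trades that uniqueness lemma for a compactness argument, at the price of heavier bookkeeping. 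Two caveats worth making explicit: your Minkowski-equality step genuinely needs $p>1$, and indeed the converse fails for $p=1$ (the curve $\mu_t=(1-t)\delta_0+t\delta_1$ is a constant-speed $W_1$-geodesic between $\delta_0$ and $\delta_1$ not of the form \eqref{eq:charact_geodesic}); the cited AGS theorem is stated for $p>1$, so the paper's statement, written for general $p\ge1$, silently inherits this restriction and your proof correctly reflects its true scope. Also dispose separately of the degenerate case $W_p(\mu_0,\mu_1)=0$, where the ratio argument is vacuous but the conclusion is trivial.
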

\begin{proof}
See \cite[Theorem 7.2.2]{AGS}. 
\end{proof}

\begin{definition}[Geodesically and strongly geodesically convex sets]
A subset $A\subseteq \mathscr P_p(\mathbb R^d)$ is said to be 
\begin{enumerate}
\item \emph{geodesically convex} if for every pair of measures $\mu_0,\mu_1$ in $A$, there exists a geodesic connecting $\mu_0$ and $\mu_1$ which is contained in $A$.
\item \emph{strongly geodesically convex} 
if for every pair of measures $\mu_0,\mu_1$ in $A$ and for every admissible transport plan $\pi\in\Pi(\mu_0,\mu_1)$, the curve $t\mapsto\mu_t$ defined by~\eqref{eq:charact_geodesic}
is contained in $A$.
\end{enumerate}
\end{definition}

The interest in this alternative concept of convexity comes from the fact that, in many problems, functionals defined on probability measures are convex along geodesics 
(a notion related to geodesically convex sets) and not convex with respect to the linear structure in the usual sense. We refer to \cite[Section 9.1]{AGS}
for further details.

\begin{remark} 
Notice that, even if the notations do not highlight this fact, the notions of \emph{geodesic} and \emph{geodesical convexity} depend on the exponent $p$ 
which has been fixed.
\end{remark}

\begin{proposition} [Strong geodesic convexity of $\tilde S_p^\Phi$]
Let $p\ge 1$, $\Phi$ as in Definition \ref{def:gentar} and assume that all the elements of $\Phi$ are also convex.
Then the generalized target $\tilde S_p^{\Phi}$ is strongly geodesically convex.
\end{proposition}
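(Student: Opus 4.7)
The plan is to use the definition of strong geodesic convexity directly: fix any $\mu_0, \mu_1 \in \tilde S_p^\Phi$, any transport plan $\pi \in \Pi(\mu_0, \mu_1)$, and define $\mu_t := ((1-t)\mathrm{pr}^1 + t\,\mathrm{pr}^2)\sharp \pi$. I need to verify two things: (i) $\mu_t \in \mathscr P_p(\mathbb R^d)$ for all $t \in [0,1]$, and (ii) for every $\varphi \in \Phi$ one has $\int_{\mathbb R^d}\varphi(x)\,d\mu_t(x) \le 0$.

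For (i), I would argue that the $p$-moment of $\mu_t$ is controlled by those of $\mu_0$ and $\mu_1$. Indeed, by the definition of $\mu_t$ and the triangle inequality in $L^p_\pi$,
\[
\mathrm{m}_p^{1/p}(\mu_t) = \|(1-t)x_1 + tx_2\|_{L^p_\pi} \le (1-t)\|x_1\|_{L^p_\pi} + t\|x_2\|_{L^p_\pi} = (1-t)\mathrm{m}_p^{1/p}(\mu_0) + t\,\mathrm{m}_p^{1/p}(\mu_1) < +\infty.
\]

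For (ii), which is the heart of the proof, I would use the change-of-variables formula for the push-forward together with the assumed convexity of each $\varphi \in \Phi$. Concretely,
\[
\int_{\mathbb R^d}\varphi(x)\,d\mu_t(x) = \int_{\mathbb R^d\times\mathbb R^d} \varphi\bigl((1-t)x_1 + tx_2\bigr)\,d\pi(x_1,x_2) \le (1-t)\int\varphi(x_1)\,d\pi + t\int\varphi(x_2)\,d\pi,
\]
where the inequality uses convexity of $\varphi$ pointwise. Since $\pi$ has marginals $\mu_0$ and $\mu_1$, the right-hand side equals $(1-t)\int\varphi\,d\mu_0 + t\int\varphi\,d\mu_1$, which is $\le 0$ because $\mu_0,\mu_1 \in \tilde S_p^\Phi$ and $1-t, t \ge 0$. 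Hence $\mu_t \in \tilde S_p^\Phi$, establishing strong geodesic convexity.

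I do not expect any real obstacles here. The only point requiring minor care is the measurability/integrability needed to apply the change-of-variables formula: since each $\varphi \in \Phi$ is continuous and bounded (cf.\ Definition \ref{def:gentar}), the integrals are well-defined and finite, and the pointwise convexity inequality can be integrated without issue. The argument actually works for any $\Phi$ consisting of convex functions that are $\pi$-integrable, so the continuity/boundedness is more than enough.
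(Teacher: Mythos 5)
Your proposal is correct and follows essentially the same route as the paper's proof: push the test function through the definition of $\mu_t$, apply the pointwise convexity of $\varphi$, and use that the marginals of $\pi$ are $\mu_0,\mu_1\in\tilde S_p^\Phi$. The only addition is your explicit check that $\mu_t\in\mathscr P_p(\mathbb R^d)$, which the paper leaves implicit; this is a harmless (and correct) refinement.
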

\begin{proof}
Let $\mu_0,\mu_1\in\tilde S_p^\Phi$ and let $\pi\in\Pi(\mu_0,\mu_1)$ be an admissible transport plan between $\mu_0$ and $\mu_1$. 
Consider the corresponding curve $\boldsymbol\mu=\{\mu_t\}_{t\in [0,1]}$ defined by~\eqref{eq:charact_geodesic}, and fix $t\in [0,1]$.
We have for every $\phi(\cdot)\in\Phi$
\begin{align*}
\int_{\mathbb R^d} & \phi(x)\,d\mu_t(x)\le\\
&\leq (1-t)\iint_{\mathbb R^d\times\mathbb R^d} \phi\left(\mathrm{pr}^1(\xi,\eta)\right)\,d\pi(\xi,\eta)
+t\iint_{\mathbb R^d\times\mathbb R^d} \phi\left(\mathrm{pr}^2(\xi,\eta)\right)\,d\pi(\xi,\eta)\\
&=(1-t)\int_{\mathbb R^d} \phi(x)\,d\mu_0(x)
+t\int_{\mathbb R^d} \phi(y)\,d\mu_1(y)\le 0\,,
\end{align*}
since $\mathrm{pr}^i\sharp\pi$ are the marginal measures of $\pi$, which belong to $\tilde S_p^\Phi$.
The conclusion follows from the arbitrariness of $\phi(\cdot)\in \Phi$.
\end{proof}

\begin{remark}
In particular, considering also the first item in Remark \ref{rem:smoothphi}, the above result holds for $\Phi:=\{d_S(\cdot)-\alpha\}$ when $S$ is nonempty, closed and convex, and $\alpha\in[0,1]$.
In this case, since in the above proof we use only the convexity property of $d_S(\cdot)$, the 
statement holds also if we equip $\mathbb R^d$ with a different norm than the Euclidean one.
\end{remark}

We conclude this section by investigating the semiconcavity properties of the generalized distance along geodesics. 
The case $p=2$ is particularly easy thanks to the geometric structure of the metric space $\mathscr P_2(\mathbb R^d)$.

\begin{proposition}[Semiconcavity of $\tilde d^{\,2}_{\tilde S_2^\Phi}$]\label{prop:semiconc_2}
Let $\tilde S_2^\Phi$ be a generalized target in $\mathscr P_2(\mathbb R^d)$.
Then the square of the generalized distance satisfies the following \emph{global semiconcavity inequality:} for every $\mu_0,\mu_1\in \mathscr P_2(\mathbb R^d)$ and every $t\in[0,1]$
\[\tilde d^{\,2}_{\tilde S_2^\Phi}(\mu_t)\ge (1-t)\,\tilde d^{\,2}_{\tilde S_2^\Phi}(\mu_0)+t\,\tilde d^{\,2}_{\tilde S_2^\Phi}(\mu_1)-t(1-t)\,W^2_2(\mu_0,\mu_1),\]
where $\boldsymbol \mu=\{\mu_t\}_{t\in[0,1]}$ is any constant speed geodesic for $W_2$ joining $\mu_0$ and $\mu_1$.
\end{proposition}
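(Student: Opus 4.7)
The plan is to reduce the claim to the well-known non-negative curvature (in the Alexandrov sense) property of the Wasserstein space $\mathscr P_2(\mathbb R^d)$, which provides the required semiconcavity of the squared distance from \emph{any} fixed measure along $W_2$-geodesics. Concretely, I will use the inequality (proved, e.g., in \cite[Theorem 7.3.2]{AGS}) that for every $\nu\in\mathscr P_2(\mathbb R^d)$ and every constant speed $W_2$-geodesic $\boldsymbol\mu=\{\mu_t\}_{t\in[0,1]}$ connecting $\mu_0$ and $\mu_1$,
\[
W_2^2(\mu_t,\nu)\ge (1-t)\,W_2^2(\mu_0,\nu)+t\,W_2^2(\mu_1,\nu)-t(1-t)\,W_2^2(\mu_0,\mu_1).
\]
This is exactly the shape I want, but with a single fixed competitor $\nu$ on both sides rather than the two potentially different optimal competitors against $\mu_0$ and $\mu_1$ that one would get from the definition of $\tilde d_{\tilde S_2^\Phi}$.

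Next, for an arbitrary $\nu\in\tilde S_2^\Phi$, I would bound the right-hand side trivially by passing to the infimum in each summand:
\[
W_2^2(\mu_0,\nu)\ge \tilde d^{\,2}_{\tilde S_2^\Phi}(\mu_0),\qquad W_2^2(\mu_1,\nu)\ge \tilde d^{\,2}_{\tilde S_2^\Phi}(\mu_1),
\]
which plugged into the curvature inequality yields, uniformly in $\nu\in\tilde S_2^\Phi$,
\[
W_2^2(\mu_t,\nu)\ge (1-t)\,\tilde d^{\,2}_{\tilde S_2^\Phi}(\mu_0)+t\,\tilde d^{\,2}_{\tilde S_2^\Phi}(\mu_1)-t(1-t)\,W_2^2(\mu_0,\mu_1).
\]
Finally, taking the infimum over $\nu\in\tilde S_2^\Phi$ on the left-hand side (observing that $\tilde S_2^\Phi$ is nonempty, so this infimum is finite) produces $\tilde d^{\,2}_{\tilde S_2^\Phi}(\mu_t)$ and concludes the proof.

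The main conceptual point — and arguably the only nontrivial one — is the invocation of the curvature inequality, so the ``hard part'' is really outsourced to the PC-type structure of $(\mathscr P_2(\mathbb R^d),W_2)$; everything else is a one-line manipulation. A small technical remark worth including is that this argument works for any generalized target, without requiring convexity of the elements of $\Phi$ nor the existence of a classical counterpart, because we only need $\tilde S_2^\Phi$ to be a nonempty subset of $\mathscr P_2(\mathbb R^d)$ and we never have to produce a geodesic lying inside it (contrary to the previous proposition on strong geodesic convexity). The restriction to $p=2$ is essential here: the argument relies on the non-negative curvature inequality, which is a specifically $W_2$ feature of Wasserstein spaces and fails for general $p$.
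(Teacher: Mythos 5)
Your proof is correct and is essentially the paper's own argument: both invoke the semiconcavity (positive curvature) inequality for $\mu\mapsto W_2^2(\mu,\sigma)$ from \cite[Theorem 7.3.2]{AGS} along $W_2$-geodesics and then pass to the infimum over $\sigma\in\tilde S_2^\Phi$. You merely spell out the intermediate bounds $W_2^2(\mu_i,\nu)\ge \tilde d^{\,2}_{\tilde S_2^\Phi}(\mu_i)$, $i=0,1$, which the paper leaves implicit in that final infimum step.
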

\begin{proof}
Owing to \cite[Theorem 7.3.2]{AGS}, we have that for any measure $\sigma\in\mathscr P_2(\mathbb R^d)$ the function
$\mu\mapsto W_2^2(\mu,\sigma)$ is semiconcave along geodesics, with semiconcavity constant independent by $\sigma$, i.e. it satisfies for every $t\in[0,1]$
\[W_2^2(\mu_t,\sigma)+t(1-t)\,W^2_2(\mu_0,\mu_1)\ge (1-t)\,W_2^2(\mu_0,\sigma)+t\,W_2^2(\mu_1,\sigma).\]
The conclusion follows by passing to the infimum on $\sigma\in \tilde S_2^\Phi$.
\end{proof}

In the case $p\neq 2$ we need additional requirements on $\Phi$.

\begin{proposition}[Semiconcavity of $\tilde d^{\,p}_{\tilde S_p^\Phi}$]\label{prop:semicon}
Let $p\ge 1$, and $\tilde S_p^\Phi$ be a generalized target. Assume
that $\tilde S_p^\Phi$ admits a classical counterpart $S\subseteq\mathbb R^d$.
Let $K\subseteq\mathbb R^d\setminus S$ be compact and convex.
Then the $p$-th power of the generalized distance $\tilde d_{\tilde S_p^\Phi}(\cdot)$ 
satisfies the following \emph{local semiconcavity inequality:} 
there exists a constant $C=C(p,K)>0$ such that for every $\mu_0,\mu_1\in\mathscr P_p(K)$ we have 
\begin{equation}\label{eq:gendis_semiconc}
\tilde d^{\,p}_{\tilde S_p^\Phi}(\mu_t)\ge (1-t)\,\tilde d^{\,p}_{\tilde S_p^\Phi}(\mu_0)+t\,\tilde d^{\,p}_{\tilde S_p^\Phi}(\mu_1)-Ct(1-t)\,W^{\min\{p,2\}}_p(\mu_0,\mu_1),
\end{equation}
where $\boldsymbol \mu=\{\mu_t\}_{t\in[0,1]}$ is any constant speed geodesic for $W_p$ joining $\mu_0$ and $\mu_1$.
\end{proposition}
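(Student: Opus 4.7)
The strategy is to reduce~\eqref{eq:gendis_semiconc} to a pointwise semiconcavity estimate for $d_S^{\,p}$ on $K$ and then transport it via an optimal transport plan. Since $\tilde S_p^\Phi$ admits $S$ as classical counterpart, Proposition~\ref{prop:compclasdist} yields $\tilde d^{\,p}_{\tilde S_p^\Phi}(\mu)=\int_{\mathbb R^d} d_S^{\,p}(x)\,d\mu(x)$. By the characterization of constant speed geodesics in~\eqref{eq:charact_geodesic}, there exists $\pi\in\Pi_o^p(\mu_0,\mu_1)$, necessarily concentrated on $K\times K$ since $\mu_0,\mu_1\in\mathscr P_p(K)$, with $\mu_t=\bigl((1-t)\mathrm{pr}^1+t\,\mathrm{pr}^2\bigr)\sharp\pi$. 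Hence
\[\tilde d^{\,p}_{\tilde S_p^\Phi}(\mu_t)=\int_{K\times K}d_S^{\,p}\bigl((1-t)x_1+tx_2\bigr)\,d\pi(x_1,x_2),\]
and~\eqref{eq:gendis_semiconc} follows from the pointwise inequality
\[d_S^{\,p}\bigl((1-t)x_1+tx_2\bigr)\geq (1-t)d_S^{\,p}(x_1)+td_S^{\,p}(x_2)-Ct(1-t)|x_1-x_2|^{\min\{p,2\}}\]
valid for all $x_1,x_2\in K$, followed by integration in $\pi$.

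To prove the pointwise estimate I would start from the classical global semiconcavity of $d_S^{\,2}$: since $d_S^{\,2}(x)=|x|^2-h(x)$ with $h(x):=\sup_{y\in S}\bigl(2\langle x,y\rangle-|y|^2\bigr)$ convex, one has $d_S^{\,2}((1-t)x_1+tx_2)\geq (1-t)d_S^{\,2}(x_1)+td_S^{\,2}(x_2)-2t(1-t)|x_1-x_2|^2$. Because $K$ is compact and disjoint from the closed set $S$, there exist $0<\delta\leq M$ with $\delta\leq d_S\leq M$ on $K$, and by convexity of $K$ the same bound holds at $(1-t)x_1+tx_2$. Writing $d_S^{\,p}=\phi\circ d_S^{\,2}$ with $\phi(s)=s^{p/2}$, I split cases. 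For $p\geq 2$ the function $\phi$ is convex and $C^2$ on $[\delta^2,M^2]$ with bounded first and second derivatives; combining the quadratic semiconcavity of $d_S^{\,2}$ with Taylor's bound for $\phi$, and using $|d_S^{\,2}(x_1)-d_S^{\,2}(x_2)|\leq 2M|x_1-x_2|$, produces the estimate with modulus $|x_1-x_2|^2=|x_1-x_2|^{\min\{p,2\}}$. For $1\leq p<2$ the function $\phi$ is concave and Lipschitz on $[\delta^2,M^2]$; concavity kills the Jensen-type term in the convex combination, and the Lipschitz bound on $\phi$ propagates the defect $2t(1-t)|x_1-x_2|^2$ through its argument, yielding a modulus $|x_1-x_2|^2$; this is then upgraded by $|x_1-x_2|^2\leq (\mathrm{diam}\,K)^{2-p}|x_1-x_2|^p$ to the required $|x_1-x_2|^{\min\{p,2\}}=|x_1-x_2|^p$. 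The boundary cases in which the convex combination shifted by the defect becomes nonpositive are handled by enlarging $C$ to absorb the trivial bound $d_S^{\,p}\leq M^p$ on $K$.

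Integrating the pointwise inequality against $\pi$ gives
\[\tilde d^{\,p}_{\tilde S_p^\Phi}(\mu_t)\geq(1-t)\tilde d^{\,p}_{\tilde S_p^\Phi}(\mu_0)+t\,\tilde d^{\,p}_{\tilde S_p^\Phi}(\mu_1)-Ct(1-t)\!\!\int_{K\times K}\!\!|x_1-x_2|^{\min\{p,2\}}d\pi.\]
By optimality of $\pi$, for $1\leq p\leq 2$ the remaining integral equals $W_p^{\,p}(\mu_0,\mu_1)=W_p^{\,\min\{p,2\}}(\mu_0,\mu_1)$, while for $p\geq 2$ H\"older's inequality with exponents $(p/2,p/(p-2))$ gives $\int|x_1-x_2|^2\,d\pi\leq\bigl(\int|x_1-x_2|^p\,d\pi\bigr)^{2/p}=W_p^{\,\min\{p,2\}}(\mu_0,\mu_1)$, closing the argument. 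The main obstacle I expect is the sub-quadratic regime $1\leq p<2$: since $s\mapsto s^{p/2}$ is concave rather than convex, the quadratic semiconcavity of $d_S^{\,2}$ does not transfer to $d_S^{\,p}$ through a direct composition with the sharp modulus $|x|^{\,p}$, and the weaker modulus is recovered only thanks to the compactness of $K$, which forces $C$ to depend on $\mathrm{diam}(K)$ and explains why the estimate is inherently local, not global.
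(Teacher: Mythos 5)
Your argument is correct, and its skeleton coincides with the paper's: use Proposition \ref{prop:compclasdist} to write $\tilde d^{\,p}_{\tilde S_p^\Phi}(\mu)=\int_{\mathbb R^d} d_S^{\,p}\,d\mu$, represent the (arbitrary) geodesic by an optimal plan $\pi\in\Pi_o^p(\mu_0,\mu_1)$ via \eqref{eq:charact_geodesic} (necessarily concentrated on $K\times K$), prove a pointwise semiconcavity inequality for $d_S^{\,p}$ along segments in $K$ with modulus $|x_0-x_1|^{\min\{p,2\}}$, integrate it against $\pi$, and convert $\int|x_0-x_1|^{\min\{p,2\}}\,d\pi$ into $W_p^{\min\{p,2\}}(\mu_0,\mu_1)$ by Jensen/H\"older. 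The genuine difference is how the pointwise step is obtained: the paper cites \cite[Proposition 2.2.2]{CSb} (local semiconcavity of $d_S$ on compact subsets of $\mathbb R^d\setminus S$) and \cite[Proposition 2.1.12 (i)]{CSb} (powers of semiconcave functions), whereas you rederive it in a self-contained way from the elementary global semiconcavity of $d_S^{\,2}$ (via $d_S^{\,2}=|\cdot|^2-h$ with $h$ convex) composed with $s\mapsto s^{p/2}$, splitting the case $p\ge 2$ (second-derivative Taylor bound on $[\delta^2,M^2]$, where $0<\delta\le d_S\le M$ on $K$, together with the $2M$-Lipschitz bound for $d_S^{\,2}$) from $1\le p<2$ (concavity plus a Lipschitz bound and $|x_0-x_1|^2\le(\mathrm{diam}\,K)^{2-p}|x_0-x_1|^p$). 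What this buys is independence from the Cannarsa--Sinestrari references and an explicit view of why the estimate is local and how $C$ depends on $K$ (through $\min_K d_S>0$ and $\mathrm{diam}\,K$); the paper's route is shorter. One small polish: instead of enlarging $C$ to cover the boundary case where the shifted convex combination leaves $[\delta^2,M^2]$, you can avoid evaluating $s^{p/2}$ there altogether: with $m:=(1-t)d_S^{\,2}(x_0)+t\,d_S^{\,2}(x_1)$ and $x_t:=(1-t)x_0+tx_1$, both $m$ and $d_S^{\,2}(x_t)$ lie in $[\delta^2,M^2]$ (convexity of $K$), so monotonicity and the mean value theorem give $m^{p/2}-d_S^{\,p}(x_t)\le \Lambda\,\big(m-d_S^{\,2}(x_t)\big)_+\le 2\Lambda\, t(1-t)|x_0-x_1|^2$ with $\Lambda:=\sup_{s\in[\delta^2,M^2]}\frac{p}{2}s^{\frac{p}{2}-1}$, which removes the exceptional case from both regimes of $p$.
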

\begin{proof}
In this proof to make clearer the notation we will omit the superscript $\Phi$, since $\Phi$ is fixed.
Under the above assumptions, and recalling Proposition \ref{prop:compclasdist}, we have $\tilde d_{\tilde S_p}(\mu_0)=\|d_S\|_{L^p_{\mu_0}}$.\par\medskip\par
Given $x_0,x_1\in K$ and $t\in [0,1]$ we set 
\[x_t:=(1-t)x_0+tx_1,\qquad\qquad d_t:=(1-t)d_{S}(x_0)+t d_{S}(x_1).\]
%
%
According to \cite[Proposition 2.2.2]{CSb}, there exists $c=c(K)>0$ such that
$d_S$ satisfies the following inequality for all $x_0,x_1\in K$:
\[d_S(x_t)\ge d_t-ct(1-t)|x_0-x_1|^2,\]
%
%
By using \cite[Proposition 2.1.12 (i)]{CSb}, we obtain that 
\begin{equation}\label{eq:semiconc_dp}
d_S^p(x_t)\ge (1-t)\,d^p_S(x_0)+t\, d^p_S(x_1) -C' t(1-t)|x_0-x_1|^{\min\{p,2\}},
\end{equation}
with $C'=C'(p,K)$.\par\medskip\par
For any Borel sets $A,B\subseteq\mathbb R^d$ and $\pi\in \Pi(\mu_0,\mu_1)$, we now have $\mathrm{supp}(\pi)\subseteq K\times K$. 
Therefore, we choose a transport plan $\pi\in \Pi_o^p(\mu_0,\mu_1)$ realizing the $p$-Wasserstein distance between $\mu_0$ and $\mu_1$, so that the representation in formula \eqref{eq:charact_geodesic} holds, and we integrate the estimate~\eqref{eq:semiconc_dp} to find that
\begin{align*}
\int_{\mathbb R^d}d_S^p(x)\,d\mu_t=\iint_{\mathbb R^d\times \mathbb R^d}d_S^p(x_t)\,d\pi 
&\geq (1-t)\int_{\mathbb R^d}d^p_S(x)\,d\mu_0+t\int_{\mathbb R^d}d^p_S(x)\,d\mu_1\\
&\hspace{.7cm}-C'\,t\,(1-t)\,\iint_{\mathbb R^d\times \mathbb R^d}|x_0-x_1|^{\min\{p,2\}}\,d\pi,
\end{align*}
where $\boldsymbol \mu=\{\mu_t\}_{t\in[0,1]}\subseteq\mathscr P_p(\mathbb R^d)$ is the constant speed geodesic corresponding to $\pi$. But 
according to Proposition \ref{prop:compclasdist}, there holds
\[\tilde d^{\,p}_{\tilde S_p}(\mu_t)=\int_{\mathbb R^d}d_S^p(x)\,d\mu_t(x),
\qquad\mbox{and}\qquad
\tilde d^{\,p}_{\tilde S_p}(\mu_i)=\int_{\mathbb R^d}d_S^p(x)\,d\mu_i(x),\quad i=0,1,\]
and applying Jensen's inequality to the concave map $\xi\mapsto \xi^{\gamma/p}$ on $\mathbb R^+$, with $\gamma=\min\{p,2\}$, we obtain that
\[\iint_{\mathbb R^d\times \mathbb R^d}|x_0-x_1|^{\min\{p,2\}}\,d\pi
\leq \begin{cases} 
\displaystyle \iint_{\mathbb R^d\times\mathbb R^d}|x_0-x_1|^p\,d\pi,&\textrm{ for }1\le p<2,\\ &\\ 
\left(\displaystyle\iint_{\mathbb R^d\times\mathbb R^d}|x_0-x_1|^p\,d\pi\right)^{2/p},&\textrm{ for }p\ge 2.
\end{cases}\]
We thus conclude that
\[\tilde d^{\,p}_{\tilde S_p}(\mu_t)\ge (1-t)\,\tilde d^{\,p}_{\tilde S_p}(\mu_0)+t\, \tilde d^{\,p}_{\tilde S_p}(\mu_1)- C'\,t\,(1-t)\, W_p^{\min\{p,2\}}(\mu_0,\mu_1),\]
and the proof is completed.  
\end{proof}
 
\begin{remark} 
Notice that inequality~\eqref{eq:gendis_semiconc} implies that, for $p\ge 2$ and under the assumption of Proposition~\ref{prop:semicon}, 
the functional $-\tilde d^{\,p}_{\tilde S_p}(\cdot)\colon\mathscr P_p(K)\to \,]-\infty,0]$ is  $\lambda$-geodesically convex, 
in the sense of \cite[Definition 9.1.1]{AGS}, with $\lambda=-2C$.
\end{remark}

\section{The generalized minimum time function}\label{sec:mintime}

\begin{definition}[Generalized minimum time function]
Given a generalized target $\tilde S_p=\tilde S_p^\Phi$ defined in Definition \ref{def:gentar}, we define the generalized minimum time function $\tilde T_p:\mathscr P_p(\mathbb R^d)\to [0,+\infty]$ by 
\[\tilde T_p(\mu):=\inf\{T\ge 0:\, \textrm{ there exists }\boldsymbol\mu=\{\mu_t\}_{t\in[0,T]}\in\mathcal A^p_{[0,T]}(\mu)\textrm{ s.t. }\mu_T\in\tilde S_p\},\]
where we set $\inf\emptyset=+\infty$ by convention.
We say that $\boldsymbol\mu=\{\mu_t\}_{t\in[0,T]}\in\mathcal A^p_{[0,T]}(\mu)$ is time optimal from $\mu$ if $\tilde T_p(\mu)\le T<+\infty$ and $\mu_{\tilde T_p(\mu)}\in\tilde S_p$. 
\end{definition}

\begin{proposition}[Properties of $\tilde T_p$]\label{prop:DPP}
Assume Hypothesis \ref{HP} for $F$. Then
\begin{enumerate}
\item for any $\mu\in\mathscr P_p(\mathbb R^d)$ with $\tilde T_p(\mu)<+\infty$ there exists a time optimal admissible trajectory from $\mu$;
\item the function $\tilde T_p(\cdot)$ is lower semicontinuous; 
\item the following Dynamic Programming Principle holds
\begin{equation}\label{eq:DPP}\tilde T_p(\mu)=\inf\{t+\tilde T_p(\mu_t):\, \boldsymbol\mu=\{\mu_t\}_{t\in[0,T]}\in \mathcal A^p_{[0,T]}(\mu),\, T>0\}.\end{equation}
In particular, $t\mapsto t+\tilde T_p(\mu_t)$ is nondecreasing along every admissible trajectory $\boldsymbol\mu=\{\mu_t\}_{t\in[0,T]}\in \mathcal A^p_{[0,T]}(\mu)$, 
and it is constant if and only if $\boldsymbol\mu=\{\mu_t\}_{t\in[0,T]}\in \mathcal A^p_{[0,T]}(\mu)$ is the restriction to $[0,T]\cap [0,\tilde T_p(\mu)]$ of an optimal trajectory.
\end{enumerate}
\end{proposition}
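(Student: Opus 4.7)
\emph{Plan of proof.} The three statements rest on the compactness/upper semicontinuity of the admissible solution map provided by Corollary \ref{cor:exist} and on the $W_p$-closedness of $\tilde S_p^\Phi$ recalled in Remark \ref{rem:smoothphi}.

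\smallskip

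\emph{Existence of an optimal trajectory (item 1).} Set $\tau:=\tilde T_p(\mu)<+\infty$ and pick a minimizing sequence $\{T_n\}_{n\in\mathbb N}\subseteq [\tau,\tau+1]$ together with trajectories $\boldsymbol\mu^{(n)}=\{\mu^{(n)}_t\}_{t\in[0,T_n]}\in \mathcal A^p_{[0,T_n]}(\mu)$ with $\mu^{(n)}_{T_n}\in\tilde S_p$ and $T_n\searrow\tau$. Using Corollary \ref{cor:exist} applied at $\mu^{(n)}_{T_n}$ and the concatenation operation, extend each $\boldsymbol\mu^{(n)}$ to an element $\boldsymbol{\tilde\mu}^{(n)}\in \mathcal A^p_{[0,\tau+1]}(\mu)$. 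By Corollary \ref{cor:exist} the set $\mathcal A^p_{[0,\tau+1]}(\mu)$ is compact in $C^0([0,\tau+1];\mathscr P_p(\mathbb R^d))$, so up to a subsequence $\boldsymbol{\tilde\mu}^{(n)}\to\boldsymbol{\mu^{*}}\in\mathcal A^p_{[0,\tau+1]}(\mu)$ uniformly in $W_p$. Then
\[W_p(\mu^{(n)}_{T_n},\mu^{*}_\tau)\le W_p(\tilde\mu^{(n)}_{T_n},\mu^{*}_{T_n})+W_p(\mu^{*}_{T_n},\mu^{*}_\tau)\xrightarrow[n\to\infty]{}0,\]
by uniform convergence and continuity of $\boldsymbol\mu^{*}$; the $W_p$-closedness of $\tilde S_p$ forces $\mu^{*}_\tau\in \tilde S_p$, and the restriction of $\boldsymbol{\mu^{*}}$ to $[0,\tau]$ is time optimal.

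\smallskip

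\emph{Lower semicontinuity (item 2).} Let $\{\mu^{(n)}\}_{n\in\mathbb N}\subseteq\mathscr P_p(\mathbb R^d)$ be such that $\mu^{(n)}\to\mu$ in $W_p$, and set $\ell:=\liminf_n\tilde T_p(\mu^{(n)})$. Assuming $\ell<+\infty$ (otherwise there is nothing to prove), pass to a subsequence with $\tilde T_p(\mu^{(n)})\to\ell$ and, by item (1), pick optimal trajectories $\boldsymbol{\mu^{(n)}}\in\mathcal A^p_{[0,\tilde T_p(\mu^{(n)})]}(\mu^{(n)})$. Extend them as above to some fixed interval $[0,\ell+1]$ to obtain $\boldsymbol{\tilde\mu}^{(n)}\in\mathcal A^p_{[0,\ell+1]}(\mu^{(n)})$. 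Since $\mu^{(n)}\to\mu$ and $\mathcal A^p_{[0,\ell+1]}$ is upper semicontinuous with compact images (Corollary \ref{cor:exist}), up to a subsequence $\boldsymbol{\tilde\mu}^{(n)}$ converges uniformly in $W_p$ to some $\boldsymbol{\mu^{*}}\in\mathcal A^p_{[0,\ell+1]}(\mu)$. By the triangular inequality applied exactly as in item (1), $\mu^{(n)}_{\tilde T_p(\mu^{(n)})}\to\mu^{*}_{\ell}$ in $W_p$, and $W_p$-closedness of $\tilde S_p$ yields $\mu^{*}_{\ell}\in\tilde S_p$, hence $\tilde T_p(\mu)\le\ell$.

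\smallskip

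\emph{Dynamic Programming Principle (item 3).} Denote the infimum in \eqref{eq:DPP} by $I(\mu)$. For the inequality $\tilde T_p(\mu)\le I(\mu)$: given any admissible $\boldsymbol\mu\in\mathcal A^p_{[0,T]}(\mu)$ and any admissible trajectory from $\mu_t$ to $\tilde S_p$ in time $\tilde T_p(\mu_t)$, their concatenation (which is admissible by the comment following Definition of concatenation) steers $\mu$ to $\tilde S_p$ in time $t+\tilde T_p(\mu_t)$. Conversely, let $\boldsymbol{\mu^{*}}$ be an optimal trajectory from $\mu$ (item 1): for every $t\in[0,\tilde T_p(\mu)]$ its restriction to $[t,\tilde T_p(\mu)]$ is admissible starting at $\mu^{*}_t$, hence $\tilde T_p(\mu^{*}_t)\le\tilde T_p(\mu)-t$, which also gives $I(\mu)\le\tilde T_p(\mu)$. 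Monotonicity along any admissible trajectory follows from the very same restriction argument: for $0\le s\le t\le T$, the restriction of $\boldsymbol\mu$ to $[s,t]$ is in $\mathcal A^p_{[s,t]}(\mu_s)$, so $\tilde T_p(\mu_s)\le (t-s)+\tilde T_p(\mu_t)$, i.e., $s+\tilde T_p(\mu_s)\le t+\tilde T_p(\mu_t)$.

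\smallskip

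\emph{Characterization of the equality case.} If $\boldsymbol{\mu}\in\mathcal A^p_{[0,T]}(\mu)$ is the restriction of an optimal trajectory, then from the DPP just proved, $\tilde T_p(\mu_t)=\tilde T_p(\mu)-t$ on $[0,T]\cap [0,\tilde T_p(\mu)]$, so $t\mapsto t+\tilde T_p(\mu_t)$ is constant equal to $\tilde T_p(\mu)$. Conversely, if this map is constant along $\boldsymbol\mu$ on $[0,T]$ with $T\ge\tilde T_p(\mu)$, taking $t=\tilde T_p(\mu)$ yields $\tilde T_p(\mu_{\tilde T_p(\mu)})=0$; lower semicontinuity (item 2) and $\tilde T_p\ge 0$ imply that the zero sublevel set of $\tilde T_p$ coincides with $\tilde S_p$, so $\mu_{\tilde T_p(\mu)}\in\tilde S_p$, showing that $\boldsymbol\mu$ restricted to $[0,\tilde T_p(\mu)]$ is optimal. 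The main subtlety lies in items (1)--(2), where the variable time horizons must be reconciled with a single application of the compactness/upper semicontinuity of $\mathcal A$; this is handled by the concatenation trick used above.
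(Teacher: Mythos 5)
Items (1), (2) and both inequalities in \eqref{eq:DPP}, together with the monotonicity of $t\mapsto t+\tilde T_p(\mu_t)$, are proved correctly and in essentially the same way as in the paper. The one methodological difference is in item (2): you pass to the limit along trajectories issuing from the varying initial data $\mu^{(n)}$ by invoking directly the upper semicontinuity and compactness of $\mathcal A^p_{[0,\ell+1]}$ (Corollary \ref{cor:exist}), whereas the paper first replaces them by nearby trajectories issuing from the fixed limit $\mu$ via the Filippov estimate of Theorem \ref{thm:filippov} and then uses compactness of $\mathcal A^p_{[0,\ell+1]}(\mu)$. Both routes work; for yours you should add the (standard) line that a u.s.c.\ map with compact images maps the compact set $\{\mu^{(n)}\}_{n}\cup\{\mu\}$ into a compact set, so that $\boldsymbol{\tilde\mu}^{(n)}\in\mathcal A^p_{[0,\ell+1]}(\mu^{(n)})$ has cluster points, and that every such cluster point must belong to $\mathcal A^p_{[0,\ell+1]}(\mu)$.

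The genuine gap is in the characterization of the equality case. The statement asserts that constancy of $t\mapsto t+\tilde T_p(\mu_t)$ on $[0,T]$ is equivalent to $\boldsymbol\mu$ being the restriction to $[0,T]\cap[0,\tilde T_p(\mu)]$ of an optimal trajectory, and you only treat the case $T\ge\tilde T_p(\mu)$: your argument evaluates the constant map at $t=\tilde T_p(\mu)$, which is not in the domain when $T<\tilde T_p(\mu)$. In that remaining case one must show that $\boldsymbol\mu$ itself extends to an optimal trajectory; the paper does this by concatenating $\boldsymbol\mu$ with an optimal trajectory from $\mu_T$ (which exists by item (1)) and checking, via the constancy of $\tau\mapsto\tau+\tilde T_p(\cdot)$ along each piece, that the concatenation is optimal, so that $\boldsymbol\mu$ is its restriction to $[0,T]$. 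A second, minor, flaw: you justify $\{\tilde T_p=0\}=\tilde S_p$ by lower semicontinuity, but l.s.c.\ gives nothing here (the inequality $\liminf\tilde T_p\ge 0$ is trivial); the correct reason is item (1): if $\tilde T_p(\nu)=0<+\infty$ there is an optimal trajectory from $\nu$, i.e.\ $\nu=\nu_{\tilde T_p(\nu)}\in\tilde S_p$ (alternatively, use closedness of $\tilde S_p$ together with the uniform continuity in time of admissible trajectories). Both defects are fixable with the paper's arguments, but as written the ``constant implies restriction of an optimal trajectory'' direction is incomplete.
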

\begin{proof}
\begin{enumerate}\item[]
\item Fix $\mu\in\mathscr P_p(\mathbb R^d)$ with $\tilde T_p(\mu)<+\infty$. For any $n\in\mathbb N\setminus\{0\}$ there exists $T_n>0$ and $\boldsymbol{\mu^{(n)}}=\{\mu^{(n)}_t\}_{t\in[0,T_n]}\in \mathcal A^p_{[0,T_n]}(\mu)$ such that $\mu^{(n)}_{T_n}\in \tilde S_p$ and $T_n\le \tilde T_p(\mu)+1/n$.
We can extend each $\boldsymbol{\mu^{(n)}}$ to an admissible curve defined on $\tilde T_p(\mu)+1$ (possibly concatenating it with an element of $\mathcal A_{[T_n,\tilde T_p(\mu)+1]}(\mu^{(n)}_{T_n})$, which is nonempty for all $n\in\mathbb N\setminus\{0\}$). Thus, without loss of generality, we may assume that we have a sequence $\boldsymbol{\hat\mu^{(n)}}=\{\hat\mu^{(n)}_t\}_{t\in[0,T]}\in\mathcal A^p_{[0,T]}(\mu)$ of admissible trajectories, which are all defined in $[0,T]$ with $T=\tilde T_p(\mu)+1$, and satisfying $\hat\mu^{(n)}_{T_n}\in \tilde S_p$ where $T_n\le \tilde T_p(\mu)+1/n$.
Recalling the compactness of $\mathcal A^p_{[0,T]}(\mu)$ (see Corollary \ref{cor:exist}), up to passing to a subsequence, the sequence of curves $\{\boldsymbol{\hat\mu^{(n)}}\}_{n\in\mathbb N}$ uniformly converges to $\boldsymbol{\hat\mu^{\infty}}=\{\hat\mu^{\infty}_t\}_{t\in [0,T]}$ and, moreover, we have $T_n\to \ell$. In particular, $\hat\mu^{(n)}_{T_n}\to \hat\mu^{\infty}_\ell$ which, by the closedness of $\tilde S_p$, implies $\hat\mu^{\infty}_\ell\in \tilde S_p$, and so $\tilde T_p(\mu)\le \ell$. But passing to the limit in $T_n\le \tilde T_p(\mu)+1/n$ yields the reverse inequality, thus $\ell=\tilde T_p(\mu)$,
hence $\boldsymbol{\hat\mu^{\infty}}$ is optimal.
\item Let $\{\mu^{(n)}\}_{n\in\mathbb N}\subseteq \mathscr P_p(\mathbb R^d)$ be a $W_p$-converging sequence satisfying $\mu^{(n)}\to \mu^{\infty}$
and \\$\displaystyle\liminf_{n\to+\infty}\tilde T_p(\mu^{(n)})=: \ell\in\mathbb R$. If $\ell=+\infty$ there is nothing to prove, so let us assume $\ell<+\infty$.
As before, up to concatenation and restriction and by taking $n$ sufficiently large, this implies that there exists a sequence $\{\boldsymbol{\mu^{(n)}}\}_{n\in\mathbb N}$ such that $\boldsymbol{\mu^{(n)}}=\{\mu^{(n)}_t\}_{t\in[0,\ell+1]}\in\mathcal A_{[0,\ell+1]}(\mu^{(n)})$
and $\mu^{(n)}_{\tilde T_p(\mu^{(n)})}\in\tilde S_p$  for all $n\in\mathbb N$. \par By Theorem \ref{thm:filippov}, there exists a sequence
$\{\boldsymbol{\hat\mu^{(n)}}=\{\hat\mu^{(n)}_t\}_{t\in[0,\ell+1]}\}_{n\in\mathbb N}\subseteq\mathcal A_{[0,\ell+1]}(\mu^{\infty})$ such that
\[W_p(\hat\mu^{(n)}_t,\mu^{(n)}_t)\le D\cdot W_p(\mu^{(n)},\mu^{\infty}),\]
for all $t\in [0,\ell+1]$, where $D:=2^{\frac{p-1}{p}}e^{L(2+Le^{L(\ell+1)})(\ell+1)}$. Recalling the compactness of $\mathcal A^p_{[0,\ell+1]}(\mu^\infty)$ (see Corollary \ref{cor:exist}), up to a passing to a subsequence, the sequence of curves $\{\boldsymbol{\hat\mu^{(n)}}\}_{n\in\mathbb N}$ uniformly converges to $\boldsymbol{\hat\mu^{\infty}}=\{\hat\mu^{\infty}_t\}_{t\in [0,\ell+1]}$, in particular, we have that 
\begin{align*}
W_p\left(\mu^{(n)}_{\tilde T_p(\mu^{(n)})},\hat\mu^{\infty}_{\ell}\right)
\le&W_p\left(\mu^{(n)}_{\tilde T_p(\mu^{(n)})},\hat\mu^{(n)}_{\tilde T_p(\mu^{(n)})}\right)+W_p\left(\hat\mu^{(n)}_{\tilde T_p(\mu^{(n)})},\hat\mu^{\infty}_{\tilde T_p(\mu^{(n)})}\right)+\\&+W_p\left(\hat\mu^{\infty}_{\tilde T_p(\mu^{(n)})},\hat\mu^{\infty}_{\ell}\right)\\
\le&D\cdot W_p\left(\mu^{(n)},\mu^{\infty}\right)+\sup_{t\in[0,\ell+1]}W_p\left(\hat\mu^{(n)}_{t},\hat\mu^{\infty}_{t}\right)+\\&+W_p\left(\hat\mu^{\infty}_{\tilde T_p(\mu^{(n)})},\hat\mu^{\infty}_{\ell}\right).
\end{align*}
By taking the limit for $n\to +\infty$, we have that $W_p\left(\mu^{(n)}_{\tilde T_p(\mu^{(n)})},\hat\mu^{\infty}_{\ell}\right)\to 0$, 
hence, by the closedness of $\tilde S_p$, we obtain $\hat\mu^{\infty}_\ell\in \tilde S_p$, and so $\tilde T_p(\mu^\infty)\le \ell$.
\item Let $\boldsymbol\mu=\{\mu_t\}_{t\in [0,T]}\in\mathcal A^p_{[0,T]}(\mu)$ and 
$\boldsymbol{\hat\mu^{(t)}}\in \mathcal A^p_{[0,\tilde T_p(\mu_t)]}(\mu_t)$ such that $\boldsymbol{\hat\mu^{(t)}}$ is optimal for $\mu_t$ 
(such an optimal trajectory exists by item (1)).
For any $t\in[0,T]$, the concatenation $\boldsymbol\mu_{|[0,t]}\odot\boldsymbol{\hat\mu^{(t)}}\in \mathcal A^p_{[0,t+\tilde T_p(\mu_t)]}(\mu)$, 
and so $\tilde T_p(\mu)\le t+\tilde T_p(\mu_t)$ for every $t\in [0,T]$, $\boldsymbol\mu=\{\mu_t\}_{t\in [0,T]}\in\mathcal A^p_{[0,T]}(\mu)$,
$t>0$, giving the first inequality in \eqref{eq:DPP}.
In particular, for $0\le t\le s\le T$, we have
\[\tilde T_p(\mu)\le t+\tilde T_p(\mu_t)\le t+(s-t)+\tilde T_p(\mu_s)=s+\tilde T_p(\mu_s),\]
since the restriction of $\boldsymbol\mu$ to $[t,T]$ is an admissible trajectory from $\mu_t$. Thus $t\mapsto t+\tilde T_p(\mu_t)$ is 
nondecreasing along all the admissible trajectories.
If $\boldsymbol\mu$ is an optimal trajectory, by taking $s=\tilde T_p(\mu)$ we have $\tilde T_p(\mu_s)=0$ and so $\tilde T_p(\mu)=t+\tilde T_p(\mu_t)$ 
for all $t\in [0,\tilde T_p(\mu)]$, which gives equality in \eqref{eq:DPP}.
Finally, assume that $t\mapsto t+\tilde T_p(\mu_t)$ is constant along an admissible trajectory $\boldsymbol\mu\in \mathcal A^p_{[0,T]}(\mu)$.
By \eqref{eq:DPP} we have that $\tilde T_p(\mu)=t+\tilde T_p(\mu_t)$ for all $t\in [0,T]$. 
If $T\ge \tilde T_p(\mu)$, this implies that $\boldsymbol\mu$ is optimal, since by taking $t=\tilde T_p(\mu)$ we obtain $\tilde T_p(\mu_{\tilde T_p(\mu)})=0$ 
and so $\mu_{\tilde T_p(\mu)}\in \tilde S_p$. If $T<\tilde T_p(\mu)$ we concatenate $\boldsymbol\mu$ with an optimal trajectory 
$\boldsymbol{\hat\mu}=\{\hat\mu_s\}_{s\in[0,\tilde T_p(\mu_T)]}\in\mathcal A_{[0,\tilde T_p(\mu_T)]}(\mu_T)$ for $\mu_T$. 
Set $\boldsymbol\mu\odot\boldsymbol{\hat\mu}=\{\tilde\mu_s\}_{s\in[0,T+\tilde T_p(\mu_T)]}$.
In particular, we have $\tilde T_p(\mu_T)=s+\tilde T_p(\hat\mu_s)$ for all $s\in [0,\tilde T_p(\mu_T)]$, thus $\tilde T_p(\mu)=\tau+\tilde T_p(\tilde\mu_\tau)$ 
for all $\tau\in [0,\tilde T_p(\mu)]$. By taking $\tau=\tilde T_p(\mu)$ we obtain $\tilde T_p(\mu_{\tilde T_p(\mu)})=0$ and so $\mu_{\tilde T_p(\mu)}\in \tilde S_p$ 
thus the concatenation $\boldsymbol\mu\odot\boldsymbol{\hat\mu}$ is an optimal trajectory, whose restriction to $[0,T]$ is $\boldsymbol\mu$.
\end{enumerate}
\end{proof}

The following definition of Small-Time Local Attainability (STLA) has been introduced in \cite{KQ} for finite-dimensional control systems, but can be easily generalized in our framework.

\begin{definition}[STLA for Wasserstein spaces]\label{def:STLA-W}
We say that the system with generalized target $\tilde S_p$ satisfies the STLA property if 
\par\medskip\par
\emph{Property \textbf{\textup{(STLA)}}}: 
for any $\varepsilon>0$ and $\hat\mu\in \tilde S_p$ there exists $\delta>0$
such that $\tilde T_p(\mu)\le \varepsilon$ for any $\mu\in\mathscr P_p(\mathbb R^d)$ satisfying 
$W_p(\mu,\hat\mu)\le \delta$.
\end{definition}

The link between STLA and continuity of the generalized minimum time is provided by the following result.

\begin{proposition}[STLA and continuity of $\tilde T_p$]\label{prop:STLAcontT}
 Let $\tilde S_p$ be 
a generalized target. Assume Hypothesis \ref{HP} for $F$, and that \textbf{\textup{(STLA)}} holds for the system.
Then $\tilde T_p:\mathscr P_p(\mathbb R^d)\to [0,+\infty]$ is continuous at every point where it is finite.
\end{proposition}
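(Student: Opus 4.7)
The plan is to combine lower semicontinuity (already proved in Proposition \ref{prop:DPP}) with an upper semicontinuity argument based on Filippov's estimate, the Dynamic Programming Principle, and \textbf{(STLA)}. So fix $\mu_0\in\mathscr P_p(\mathbb R^d)$ with $T_0:=\tilde T_p(\mu_0)<+\infty$. It suffices to show that for every $\varepsilon>0$ there exists $\rho>0$ such that $\tilde T_p(\mu)\le T_0+\varepsilon$ whenever $W_p(\mu,\mu_0)\le\rho$.

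First, by Proposition \ref{prop:DPP}(1) there exists a time-optimal trajectory $\boldsymbol{\mu^{(0)}}=\{\mu^{(0)}_t\}_{t\in[0,T_0]}\in\mathcal A^p_{[0,T_0]}(\mu_0)$ with $\hat\mu:=\mu^{(0)}_{T_0}\in\tilde S_p$. Apply \textbf{(STLA)} at $\hat\mu$ with parameter $\varepsilon$: there exists $\delta>0$ such that $\tilde T_p(\nu)\le\varepsilon$ whenever $W_p(\nu,\hat\mu)\le\delta$. Next, given any $\mu\in\mathscr P_p(\mathbb R^d)$, Theorem \ref{thm:filippov} (applied on the interval $[0,T_0]$) produces an admissible trajectory $\boldsymbol\mu=\{\mu_t\}_{t\in[0,T_0]}\in\mathcal A^p_{[0,T_0]}(\mu)$ satisfying
\[
W_p(\mu_t,\mu^{(0)}_t)\le C\cdot W_p(\mu,\mu_0)\quad\text{for all }t\in[0,T_0],
\]
with $C:=2^{(p-1)/p}e^{L(2+Le^{LT_0})T_0}$. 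Setting $\rho:=\delta/C$, whenever $W_p(\mu,\mu_0)\le\rho$ we have $W_p(\mu_{T_0},\hat\mu)\le\delta$, hence $\tilde T_p(\mu_{T_0})\le\varepsilon$ by the STLA choice.

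The Dynamic Programming Principle \eqref{eq:DPP} then gives
\[
\tilde T_p(\mu)\le T_0+\tilde T_p(\mu_{T_0})\le T_0+\varepsilon=\tilde T_p(\mu_0)+\varepsilon,
\]
proving upper semicontinuity at $\mu_0$. Combined with the lower semicontinuity established in Proposition \ref{prop:DPP}(2), this yields continuity of $\tilde T_p$ at $\mu_0$. The only nontrivial conceptual step is recognizing that \textbf{(STLA)} must be invoked at the \emph{endpoint} $\hat\mu=\mu^{(0)}_{T_0}\in\tilde S_p$ of the optimal trajectory (rather than at some other point of $\tilde S_p$), and that Filippov's estimate is precisely what lets us transfer closeness of initial data to closeness at the endpoint uniformly; everything else is a routine concatenation via the DPP.
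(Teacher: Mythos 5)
Your proof is correct and follows essentially the same route as the paper: fix an optimal trajectory, use the Filippov estimate of Theorem \ref{thm:filippov} to transport nearby initial data to an endpoint close to $\mu^{(0)}_{T_0}\in\tilde S_p$, invoke \textbf{\textup{(STLA)}} at that endpoint, and conclude via the Dynamic Programming Principle together with the lower semicontinuity from Proposition \ref{prop:DPP}. The only difference is presentational (an $\varepsilon$--$\delta$ formulation instead of the paper's sequential one), so nothing further is needed.
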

\begin{proof}
Recalling the l.s.c. of $\tilde T_p(\cdot)$, given $\mu\in\mathscr P_p(\mathbb R^d)$ with $\tilde T_p(\mu)=+\infty$,
we have $\displaystyle\lim_{n\to+\infty}\tilde T_p(\mu^{(n)})=+\infty$ for every sequence $\{\mu^{(n)}\}_{n\in\mathbb N}$ converging to $\mu$ in $W_p$.
\par\medskip\par
Therefore, we assume $T:=\tilde T_p(\mu)<+\infty$. Since $\tilde T_p(\cdot)$ is l.s.c., it is enough to prove that for all $\{\bar\mu^{(n)}\}_n\subseteq\mathscr P_p(\mathbb R^d)$ such that $W_p(\bar\mu^{(n)},\mu)\to0$ as $n\to+\infty$, we have
\[\limsup_{n\to+\infty}\tilde T_p(\bar\mu^{(n)})\le T.\]
Fix an optimal trajectory $\boldsymbol\mu^{\infty}:=\{\mu^\infty_t\}_{t\in[0,T]}$ starting from $\mu^{\infty}_{|t=0}=\mu$.
Let $\{\mu^{(n)}\}_{n\in\mathbb N}$ be a sequence converging to $\mu$ in $W_p$ and such that $\displaystyle\lim_{n\to+\infty}\tilde T_p(\mu^{(n)})$ exists.
By Theorem \ref{thm:filippov},
there exists a sequence of admissible trajectories $\{\boldsymbol\mu^{(n)}\}_{n\in\mathbb N}$ such that 
\begin{itemize}
\item $\boldsymbol\mu^{(n)}=\{\mu^{(n)}_t\}_{t\in [0,T]}$, $\mu^{(n)}_0=\mu^{(n)}$ for all $n\in\mathbb N$ and
\item $\tilde d_{\tilde S_p}(\mu^{(n)}_T)\le W_p(\mu^{(n)}_T,\mu^{\infty}_T)\le D\cdot W_p(\mu^{(n)},\mu)$, recalling that $\mu^{\infty}_T\in\tilde S_p$,
\end{itemize}
where $D:=2^{\frac{p-1}{p}}e^{L(2+Le^{LT})T}$.
In particular, by \textbf{\textup{(STLA)}}, given $\varepsilon>0$ there exists $n_\varepsilon\in\mathbb N$ such that for all $n>n_\varepsilon$ we have 
$\tilde T_p(\mu^{(n)}_T)\le \varepsilon$. By Dynamic Programming principle, we have 
\[\tilde T_p(\mu^{(n)})\le T+\tilde T_p(\mu^{(n)}_T)\le T+\varepsilon,\]
By letting $n\to +\infty$ and $\varepsilon\to 0$, we have
\[\lim_{n\to+\infty}\tilde T_p(\mu^{(n)})\le T.\]
We conclude by the arbitrariness of the sequence $\{\mu^{(n)}\}_{n\in\mathbb N}$.
\end{proof}

\begin{definition}\label{def:Lsigma}
Given $\Phi\subset C_b^0(\mathbb R^d)$, $\phi\in\Phi$ and $\mu\in \mathscr P(\mathbb R^d)$ we define
\[L_\phi(\mu):=\displaystyle\int_{\mathbb R^d}\phi(x)\,d\mu(x),\hspace{2cm}\sigma_\Phi(\mu):=\sup_{\phi\in\Phi}L_\phi(\mu).\]
\end{definition}

Our aim is to provide a sufficient condition for \textbf{\textup{(STLA)}}, following the line of \cite{LM} and \cite{MR} for finite-dimensional systems. We recall that the l.s.c. of $\tilde T_p(\cdot)$ was already showed in \cite[Theorem 4]{CMNP} in a simplified setting,
while a stronger sufficient condition was provided in \cite[Theorem 4.1]{Cav} to prove the Lipschitz continuity regularity. The continuity of $\tilde T_p(\cdot)$ was a crucial assumption also in \cite[Theorem 8]{CMNP}
to prove that it solves an Hamilton-Jacobi-Bellman equation in Wasserstein space.

\medskip

The following definition establishes a quantitative estimate of the maximal infinitesimal decreasing of the functions $\phi\in\Phi$ defining the generalized target, along the admissible trajectories of the system.

\begin{definition}\label{def:Sattain}
We say that the generalized target $\tilde S^\Phi_p$ is \emph{$(r,Q)$-attainable} if there exist continuous maps
\begin{equation*}
r:[0,+\infty[\to\left[0,\min\left\{1,\frac{1}{2L}\right\}\right],\quad Q:\left[0,\min\left\{1,\frac{1}{2L}\right\}\right]\times[0,+\infty[\to\mathbb R
\end{equation*}
such that
\begin{enumerate}
\item $r(q)=0$ if and only if $q=0$;
\item $Q(r(q),q)<0$ for all $q\in]0,+\infty[$;
\item the function $q\mapsto \dfrac{r(q)}{|Q(r(q),q)|}$ is decreasing and integrable on $[0,+\infty[$.
\item for any $\mu\in\mathscr P_p(\mathbb R^d)\setminus\tilde S^\Phi_p$ there exists $\boldsymbol\mu=\{\mu_t\}_{t\in[0,r(\sigma_\Phi(\mu))]}\in\mathcal A^p_{[0,r(\sigma_\Phi(\mu))]}(\mu)$ such that 
\[\inf_{t\in[0,r(\sigma_\Phi(\mu))]}\{\sigma_\Phi(\mu_t)-\sigma_\Phi(\mu)\}\le 2\, Q(r(\sigma_\Phi(\mu)),\sigma_\Phi(\mu)).\]
\end{enumerate}
\end{definition}

\begin{remark}
Roughly speaking, $(r,Q)$-attainability expresses a relation between the variation of the distance (or a related positive function vanishing only on the target) along a particular admissible 
trajectory in a time interval, and the size of the time interval itself. The integrability condition asks that the approaching speed, which can be seen as the quotient 
between the variation of the distance and the time needed to realize it, is sufficiently high to ensure that the target will be reached in finite time.
Finite-dimensional examples of similar constructions can be found e.g. in \cite{LM}, while an example in this setting with no interactions can be found in \cite{Cav}.
\end{remark}

With the notations of Definitions \ref{def:Lsigma}, \ref{def:Sattain} we state the following results of this section.
\begin{proposition}\label{prop:Tattain}
Assume Hypothesis \ref{HP} for $F$ and that the generalized target $\tilde S^{\Phi}_p$ is $(r,Q)$-attainable.
Then 
\[\boldsymbol T(\mu):=\int_0^{\sigma_{\Phi}(\mu)}\dfrac{r(q)\,dq}{|Q(r(q),q)|}\ge\tilde T_p(\mu).\]
\end{proposition}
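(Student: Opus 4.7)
The plan is to construct, by iterated application of the $(r,Q)$-attainability property, an admissible trajectory from $\mu$ that reaches $\tilde S_p^\Phi$ in time at most $\boldsymbol T(\mu)$. If $\sigma_\Phi(\mu)\le 0$ then $\mu\in\tilde S_p^\Phi$, so $\tilde T_p(\mu)=0$ and the claim is trivial; assume therefore $q_0:=\sigma_\Phi(\mu)>0$ and set $\mu^{(0)}:=\mu$. Inductively, as long as $q_n:=\sigma_\Phi(\mu^{(n)})>0$, the attainability property yields $\boldsymbol\mu^{(n)}\in\mathcal A^p_{[0,r(q_n)]}(\mu^{(n)})$ with
\[
\inf_{t\in[0,r(q_n)]}\sigma_\Phi(\mu^{(n)}_t)\le q_n+2Q(r(q_n),q_n).
\]
Since $\sigma_\Phi$ is lower semicontinuous on $\mathscr P_p(\mathbb R^d)$ as a supremum of continuous affine functionals and $t\mapsto\mu^{(n)}_t$ is $W_p$-continuous, the infimum is attained at some $t_n\in[0,r(q_n)]$; set $\mu^{(n+1)}:=\mu^{(n)}_{t_n}$, so that $q_{n+1}\le q_n+2Q(r(q_n),q_n)<q_n$.

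The sequence $\{q_n\}$ is strictly decreasing and nonnegative, hence converges to some $q_\infty\ge 0$; if $q_\infty>0$, then by continuity of $r$ and $Q$ the decrements $q_n-q_{n+1}\ge 2|Q(r(q_n),q_n)|$ are bounded below by $2|Q(r(q_\infty),q_\infty)|>0$, a contradiction. Hence $q_n\downarrow 0$ (with the possibility that the iteration terminates at some finite $N$ with $q_N\le 0$). Concatenating the pieces $\boldsymbol\mu^{(n)}|_{[0,t_n]}$ via \cite[Lemma 4.4]{DNS} produces an admissible curve defined on $\bigl[0,\sum_n t_n\bigr)$. To bound the total time, I use $t_n\le r(q_n)$ and $q_n-q_{n+1}\ge 2|Q(r(q_n),q_n)|$ to write
\[
t_n\le\frac{r(q_n)}{2\,|Q(r(q_n),q_n)|}\,(q_n-q_{n+1}).
\]
The monotonicity hypothesis on $q\mapsto r(q)/|Q(r(q),q)|$ means that its value at $q_n$ is the minimum over $[q_{n+1},q_n]$, so
\[
\frac{r(q_n)}{|Q(r(q_n),q_n)|}(q_n-q_{n+1})\le\int_{q_{n+1}}^{q_n}\frac{r(q)}{|Q(r(q),q)|}\,dq,
\]
and summing telescopically yields $T^*:=\sum_n t_n\le\tfrac12\,\boldsymbol T(\mu)$.

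To finish, I need the concatenated trajectory to extend continuously to $T^*$ with endpoint in $\tilde S_p^\Phi$. Uniform moment estimates from Lemma \ref{lemma:moment}, applied on each finite concatenation of pieces, give a Lipschitz bound for the curve on every $[0,T^*-\varepsilon]$, with constants depending only on $\mathrm m_p^{1/p}(\mu)$, $K_F$, $L$ and $T^*$ (all finite); hence the curve is uniformly Lipschitz on $[0,T^*)$ and extends by $W_p$-continuity to an admissible trajectory on $[0,T^*]$ ending at some $\mu_{T^*}$. By lower semicontinuity of $\sigma_\Phi$,
\[
\sigma_\Phi(\mu_{T^*})\le\liminf_n\sigma_\Phi(\mu^{(n)})=\lim_n q_n=0,
\]
so $\mu_{T^*}\in\tilde S_p^\Phi$, whence $\tilde T_p(\mu)\le T^*\le \boldsymbol T(\mu)/2\le\boldsymbol T(\mu)$. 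The main obstacle is the last step: propagating the uniform moment/Lipschitz control through the countable concatenation, which works only because $T^*<\infty$ and the constants in Lemma \ref{lemma:moment} depend on the cumulative elapsed time rather than on the number of pieces.
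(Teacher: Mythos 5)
Your proposal is correct and follows essentially the same strategy as the paper's proof: iterate the $(r,Q)$-attainability property to build a sequence of levels $q_n\downarrow 0$, bound each step time by $r(q_n)$ and compare with $\int_{q_{n+1}}^{q_n} r(q)/|Q(r(q),q)|\,dq$ via the monotonicity of $q\mapsto r(q)/|Q(r(q),q)|$, then concatenate and pass to the limit. The only differences are cosmetic refinements: you use lower semicontinuity of $\sigma_\Phi$ to take an exactly attained infimum (the paper instead picks $\varepsilon$-approximate times with $\varepsilon=-Q$, losing the factor $\tfrac12$), and you spell out the extension of the countable concatenation to the closed interval, which the paper treats more tersely.
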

\begin{proof}
Define sequences $\{\mu^{(i)}\}_{i\in\mathbb N}\subseteq \mathscr P(\mathbb R^d)$, $\{\sigma_i\}_{i\in\mathbb N}$, $\{t_i\}_{i\in\mathbb N}\subseteq [0,1]$ as follows.
Set $\mu^{(0)}=\mu$. Suppose to have defined $\mu^{(i)}$, then define $\sigma_i=\sigma_\Phi(\mu^{(i)})$. 
We notice that, by assumption, if $\mu^{(i)}\notin\tilde S^{\Phi}_p$ we have $\sigma_i>0$, and so $Q(r(\sigma_i),\sigma_i)<0$.

By property (4) in Definition \ref{def:Sattain}, if $\mu^{(i)}\notin\tilde S^\Phi_p$ there exists $\boldsymbol\mu^{(i)}=\{\mu^{(i)}_t\}_{t\in[0,r(\sigma_i)]}\in\mathcal A^p_{[0,r(\sigma_i)]}(\mu^{(i)})$ such that 
\[\inf_{t\in[0,r(\sigma_i)]}\{\sigma_\Phi(\mu^{(i)}_t)-\sigma_i\}\le 2\, Q(r(\sigma_i),\sigma_i).\]
Thus, for any $\varepsilon>0$ there exists $t_i^\varepsilon\in[0,r(\sigma_i)]$ such that
\[\sigma_\Phi(\mu^{(i)}_{t_i^\varepsilon})-\sigma_i\le 2\, Q(r(\sigma_i),\sigma_i)+\varepsilon.\]
Notice that, if we choose $\varepsilon$ sufficiently small, in particular $0<\varepsilon<-2\,Q(r(\sigma_i),\sigma_i)$, then $t_i^\varepsilon\neq0$.
We thus fix $\hat\varepsilon(i)=-Q(r(\sigma_i),\sigma_i)$ and set $t_i=t_i^{\hat\varepsilon(i)}>0$ and $\mu^{(i+1)}=\mu^{(i)}_{t_i}$.

While, if $\mu^{(i)}\in\tilde S^\Phi_p$, then we set $t_i=0$ and $\mu^{(i+1)}=\mu^{(i)}_{t_i}=\mu^{(i)}$.

Thus, together with property (1) in Definition \ref{def:Sattain}, this implies that $\mu^{(i)}\notin\tilde S^{\Phi}_p$ if and only if $\sigma_i,\,t_i>0$.

Notice that $\sigma_{i}\ge 0$ for all $i\in\mathbb N$, moreover, if $\sigma_i=0$ then $\sigma_{m}=t_m=0$ for all $m\ge i$.

\medskip

For every $i\in\mathbb N$ such that $\sigma_i\ne 0$ we have
\begin{equation}\label{eq:attsigma}
\sigma_{i+1}-\sigma_i\le 2\,Q(r(\sigma_i),\sigma_i)+\hat\varepsilon(i)=Q(r(\sigma_i),\sigma_i)<0,
\end{equation}
by property (2) in Definition \ref{def:Sattain}.
Thus the sequence $\{\sigma_i\}_{i\in\mathbb N}$ is decreasing and bounded from below, and so it has a limit $\sigma_{\infty}\ge 0$.
If $\sigma_i=0$ for some $i\in\mathbb N$ then $\sigma_{\infty}=0$. If $\sigma_i\ne 0$ for all $i\in\mathbb N$, by passing to the limit in \eqref{eq:attsigma} we get $Q(r(\sigma_{\infty}),\sigma_\infty)=0$, by continuity of $Q(\cdot,\cdot)$ and $r(\cdot)$, which implies $\sigma_{\infty}=0$.

\medskip

We have
\[\boldsymbol T(\mu)\ge \sum_{\substack{i\in\mathbb N\\ \sigma_i\ne 0}}\dfrac{r(\sigma_i)(\sigma_{i}-\sigma_{i+1})}{|Q(r(\sigma_i),\sigma_i)|}\ge \sum_{i\in\mathbb N}t_i.\]
To conclude the proof, we consider two cases
\begin{itemize}
\item  assume that $\sigma_i\ne 0$ for all $i\in\mathbb N$. Then for any $i\in\mathbb N$ there exists an admissible trajectory $\boldsymbol\mu^{(\infty)}=\{\mu^{(\infty)}_t\}_{[0,\boldsymbol T]}$ starting from $\mu$ and coinciding with $\boldsymbol\mu^{(i)}$ on $[t_{i-1},t_i]$. 
In particular, $\mu^{(\infty)}_{\sum t_i}\in \tilde S^{\Phi}_p$ since $\sigma_{\infty}=0$, and so $\boldsymbol T(\mu)\ge \tilde T_p(\mu)$.
\item let $\hat\imath$ the minimum of the set $\{i\in\mathbb N:\,t_i=0\}$. Then there exists an admissible trajectory $\boldsymbol{\hat\mu}^{(\hat\imath)}=\{{\hat\mu}^{(\hat\imath)}_t\}_{[0,\boldsymbol T]}$ starting from $\mu$ and coinciding with $\boldsymbol\mu^{(i)}$ on $[t_{i-1},t_i]$, for all $i\ge1$. 
In particular, ${\hat\mu}^{(\hat\imath)}_{\sum_{i=1}^{\hat\imath-1} t_i}\in \tilde S^{\Phi}_p$, and so 
\[\boldsymbol T(\mu)\ge \sum_{i=1}^{\infty} t_i=\sum_{i=1}^{\hat\imath-1} t_i\ge \tilde T_p(\mu).\]
\end{itemize}
Thus in both cases we have $\boldsymbol T(\mu)\ge \tilde T_p(\mu)$, which concludes the proof.
\end{proof}

\begin{theorem}[Sufficient condition for \textbf{\textup{(STLA)}}]\label{thm:suffSTLA}
Assume Hypothesis \ref{HP} for $F$ and that the generalized target $\tilde S^{\Phi}_p$ is $(r,Q)$-attainable. Assume that there exists $C>0$ and an open set $U\subseteq\mathscr P_p(\mathbb R^d)$ such that $U\supseteq\tilde S^\Phi_p$ and
$\sigma_{\Phi}(\mu)\le C$ for all $\mu\in U$. Then \textbf{\textup{(STLA)}} holds.
\end{theorem}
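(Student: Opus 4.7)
The plan is to combine Proposition~\ref{prop:Tattain}, which yields the bound
\[
\tilde T_p(\mu)\le H(\sigma_\Phi(\mu)),\qquad H(s):=\int_0^s\frac{r(q)}{|Q(r(q),q)|}\,dq,
\]
with a careful continuity analysis of $\sigma_\Phi$ at points of the target. By condition~(3) of Definition~\ref{def:Sattain}, the primitive $H$ is continuous, nondecreasing and bounded on $[0,+\infty[$, with $H(0)=0$. I would fix $\hat\mu\in\tilde S^\Phi_p$ and $\varepsilon>0$, then use the continuity of $H$ at $0$ to choose $\eta\in(0,C]$ such that $H(\eta)<\varepsilon$; the openness of $U\ni\hat\mu$ provides $\delta_0>0$ with $B_{W_p}(\hat\mu,\delta_0)\subseteq U$, so that already $\sigma_\Phi(\mu)\le C$ for every $\mu$ in this preliminary ball.

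The core reduction is then to produce $\delta\in(0,\delta_0]$ such that $W_p(\mu,\hat\mu)<\delta$ implies the sharper estimate $\sigma_\Phi(\mu)\le \eta$. Once this is secured, the conclusion is immediate: for $\mu\notin\tilde S^\Phi_p$, Proposition~\ref{prop:Tattain} and the monotonicity of $H$ give
\[
\tilde T_p(\mu)\le H(\sigma_\Phi(\mu))\le H(\eta)<\varepsilon,
\]
while if $\mu\in\tilde S^\Phi_p$ then $\tilde T_p(\mu)=0<\varepsilon$, which is exactly \textbf{(STLA)} at $\hat\mu$.

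The hard part will be the above semicontinuity-type control of $\sigma_\Phi$ at $\hat\mu$. Since each functional $L_\phi$ is $W_p$-continuous (as $\phi\in C^0_b(\mathbb R^d)$ and $W_p$-convergence implies narrow convergence) and $\sigma_\Phi(\hat\mu)\le 0$, one has a priori only the lower-semicontinuity inequality $\liminf_{\mu\to\hat\mu}\sigma_\Phi(\mu)\ge\sigma_\Phi(\hat\mu)$, whereas (STLA) requires a $\limsup$-control in the opposite direction. I plan to exploit both the hypothesis $\sigma_\Phi\le C$ on $U$ (which rules out the ``blow-up'' of the supremum near $\hat\mu$) and the regularization of $\Phi$ into bounded Lipschitz functions from Remark~\ref{rem:smoothphi}: arguing by contradiction, a sequence $\mu_n\to\hat\mu$ in $W_p$ with $\sigma_\Phi(\mu_n)>\eta$ would yield $\phi_n\in\Phi$ with $L_{\phi_n}(\mu_n)>\eta\ge L_{\phi_n}(\hat\mu)+\eta$; the uniform bound $L_{\phi_n}\le C$ on $U$ together with a compactness extraction on $\{\phi_n\}$ should produce a limiting function $\phi^*$ for which $L_{\phi^*}(\hat\mu)\ge \eta>0$, contradicting $\hat\mu\in\tilde S^\Phi_p$. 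Making this extraction rigorous is the delicate point, and is where the assumption $\sigma_\Phi\le C$ on $U$ plays its essential role.
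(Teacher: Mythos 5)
Your first reduction is exactly the right one, and it coincides with the paper's: by Proposition \ref{prop:Tattain} and the continuity at $0$ of $\rho\mapsto\int_0^{\rho}r(q)/|Q(r(q),q)|\,dq$ (item (3) of Definition \ref{def:Sattain}), \textbf{\textup{(STLA)}} follows once you show that $\sigma_\Phi(\mu)$ can be made smaller than any prescribed $\eta>0$ for $\mu$ in a small $W_p$-ball around a target point. The gap is in how you propose to prove this upper control. The contradiction argument via a ``compactness extraction on $\{\phi_n\}$'' has no justification: $\Phi$ is an arbitrary subset of $C^0_b(\mathbb R^d)$ (after Remark \ref{rem:smoothphi} you may take the elements bounded and Lipschitz, but with no \emph{uniform} bound on the Lipschitz constants or on the sup-norms), so the sequence $\phi_n$ need not be equibounded nor equicontinuous and admits no convergent subsequence in any topology you can use. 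The hypothesis $\sigma_\Phi\le C$ on $U$ only bounds the averages $\int\phi_n\,d\mu$ for $\mu\in U$; it does not bound the functions $\phi_n$ themselves pointwise, let alone give precompactness of $\{\phi_n\}$. Moreover, even if some subsequence converged pointwise (or locally uniformly) to a limit $\phi^*$, the strict inequality $L_{\phi_n}(\mu_n)>\eta$ does not pass to $L_{\phi^*}(\hat\mu)\ge\eta$ without uniform convergence on sets carrying most of the mass of the $\mu_n$'s, and $\phi^*$ would in any case not need to belong to $\Phi$, so no direct contradiction with $\hat\mu\in\tilde S^\Phi_p$ is produced. As written, the ``delicate point'' you flag is precisely where the argument breaks down.

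The paper closes this step by a completely different (and much shorter) mechanism: the function $\mu\mapsto\max\{\sigma_\Phi(\mu),0\}$ is convex with respect to convex combinations of measures, being a supremum of the functionals $\mu\mapsto L_\phi(\mu)$ which are affine in $\mu$ (Definition \ref{def:Lsigma}); it vanishes exactly on $\tilde S^\Phi_p$, and the standing hypothesis $\sigma_\Phi\le C$ on the open set $U\supseteq\tilde S^\Phi_p$ is exactly the local upper bound needed to invoke the classical principle that a convex function bounded above on a neighborhood is continuous there. This yields the continuity of $\max\{\sigma_\Phi(\cdot),0\}$ at every $\hat\mu\in\tilde S^\Phi_p$, i.e. precisely the statement ``$W_p(\mu,\hat\mu)$ small $\Rightarrow\sigma_\Phi(\mu)\le\eta$'' that you need; the proof then concludes with Proposition \ref{prop:Tattain} as in your first part. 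So the role of the bound $\sigma_\Phi\le C$ on $U$ is not to rule out blow-up in a compactness argument, but to feed the convexity argument; replacing your extraction step by this convexity observation repairs the proof and aligns it with the paper's.
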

\begin{proof}
Fix $\varepsilon>0$. Since $\max\{\sigma_{\Phi}(\mu),0\}\le C$ in a neighborhood of $\tilde S^\Phi_p$, we have that the convex function 
$\mu\mapsto\max\{\sigma_{\Phi}(\mu),0\}$ is continuous in a neighborhood of $\tilde S^\Phi_p$ and vanishes exactly on $\tilde S^\Phi_p$.
Thus for any $\varepsilon>0$ there exists $\rho,\delta>0$ such that if $d_{\tilde S^\Phi_p}(\mu)\le \delta$ we have $\sigma_{\Phi}(\mu)\le \rho$ and
\[\varepsilon>\int_0^{\rho}\dfrac{r(q)\,dq}{|Q(r(q),q)|}\ge\tilde T^{\Phi}_p(\mu),\]
recalling that by the integrability assumption in item (3) in Definition \ref{def:Sattain}, the map
\[\rho\mapsto\int_0^{\rho}\dfrac{r(q)\,dq}{|Q(r(q),q)|}\]
is continuous.
\end{proof}

In conclusion, in order to check the $(r,Q)$-attainability of a set from the data of the problem, the following result may serve the purpose.

\begin{corollary}\label{cor:pre-example}
Given $\alpha\ge 0$, an interval $I\subseteq\mathbb R$, $\gamma\in AC(I;\mathbb R^d)$ and $v:\mathbb R^d\to\mathbb R^d$, we define
\[\Delta^v_{\alpha,\gamma}(t):=\left|\dfrac{\gamma(t)-\gamma(0)}{t^{1+\alpha}}-v(\gamma(0))\right|.\]
Let $\mathcal D\subseteq\mathscr P_2(\mathbb R^d)$ and assume that there exist constants $C_\phi\ge 0$, $\alpha,\beta,K>0$ such that, by defining for any $\mu\in\mathcal D$
\begin{equation*}
t_\mu:=\min\left\{1,\dfrac{1}{2L},\sigma_\Phi^{1/\beta}(\mu)\right\} \textrm{ and } I_\mu:=[0,t_\mu],
\end{equation*}
we have
\begin{enumerate}
\item[a.)] $\Phi:=\{\phi\}$, where $\phi$ is semiconcave with constant $C_\phi$;
\item[b.)] for all $\mu\in\mathcal D\setminus \tilde S^\Phi_2$ there exist functions $v_\mu,\xi_\mu\in L^2_\mu(\mathbb R^d;\mathbb R^d)$, 
$\boldsymbol\eta\in\mathscr P(\mathbb R^d\times\Gamma_{I_\mu})$, and constants $C_{2,\mu},C_{3,\mu},C_{4,\mu}>0$
satisfying
\begin{itemize}
\item $0\le \alpha<\beta-1$;
\item $\displaystyle\boldsymbol\mu=\{e_t\sharp\boldsymbol\eta\}_{t\in I_\mu}\in\mathcal A_{I_\mu}(\mu)$, with $e_{t_\mu}\sharp\boldsymbol\eta\in\mathcal D$;
\item $\xi_\mu(x)\in \partial^P\phi(x)$ for $\mu$-a.e. $x\in\mathbb R^d$;
\item $\displaystyle\int_{\mathbb R^d}\langle \xi_\mu(x),v_\mu(x)\rangle\,d\mu(x)\le -C_{2,\mu}<0$;
\item $\left(\displaystyle\int_{\mathbb R^d\times\Gamma_{I_\mu}} |\Delta^{v_\mu}_{\alpha,\gamma}(t_\mu)|^2\,d\boldsymbol\eta(x,\gamma)\right)^{1/2}\le C_{3,\mu}t_\mu$;
\item $\|v_\mu\|_{L^2_\mu}\le C_{4,\mu}$;
\item $\left(-C_{2,\mu}+C_{3,\mu}\|\xi_\mu\|_{L^2_\mu}t_\mu+2 C_\phi(C_{3,\mu}^2 t_\mu^2+C_{4,\mu}^2)t_\mu^{\alpha+1}\right)\le -2K\cdot t_\mu$.
\end{itemize}
\end{enumerate}
Then \textbf{\textup{(STLA)}} holds in $\mathcal D$ and for all $\mu\in\mathcal D$ we have 
\[\tilde T^\Phi_2(\mu)\le \begin{cases}
\dfrac{\beta\sigma^{\frac{\beta-\alpha-1}{\beta}}_\Phi(\mu)}{K(\beta-\alpha-1)},&\textrm{ if }\sigma_\Phi(\mu)\le \min\{1,(2L)^{-\beta}\}\\ \\
\dfrac{\beta\,(2L)^{-\beta+\alpha+1}}{K(\beta-\alpha-1)}+\dfrac{1}{K}(2L)^{\alpha+1}\,(\sigma_\Phi(\mu)-(2L)^{-\beta}),&\textrm{ if }\sigma_\Phi(\mu)\ge (2L)^{-\beta}=\min\{1,(2L)^{-\beta}\}\\ \\
\dfrac{\beta}{K(\beta-\alpha-1)}+\dfrac{1}{K}(\sigma_\Phi(\mu)-1),&\textrm{ if }\sigma_\Phi(\mu)\ge 1=\min\{1,(2L)^{-\beta}\}.\end{cases}\]
\end{corollary}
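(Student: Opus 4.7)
The plan is to recognize that hypotheses (a)--(b) are tailor-made so that the generalized target $\tilde S^\Phi_2$ is $(r,Q)$-attainable (on $\mathcal D$) for the explicit choices
\[
r(q):=\min\{1,(2L)^{-1},q^{1/\beta}\}, \qquad Q(r,q):=-K\,r^{2+\alpha},
\]
so that Proposition \ref{prop:Tattain} and Theorem \ref{thm:suffSTLA} can be applied. First I would check the structural items of Definition \ref{def:Sattain}: $r(q)\in[0,\min\{1,(2L)^{-1}\}]$ and vanishes exactly at $q=0$, while $Q(r(q),q)=-K\,r(q)^{2+\alpha}<0$ for $q>0$. For item (3), note $r(q)/|Q(r(q),q)|=1/(K\,r(q)^{1+\alpha})$; on $(0,\min\{1,(2L)^{-\beta}\}]$ this equals $q^{-(1+\alpha)/\beta}/K$, integrable exactly because $\alpha<\beta-1$, and on the complementary range it is a constant. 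The fact that this ratio is \emph{decreasing} follows since $r$ is nondecreasing.

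The crux is item (4). Fix $\mu\in\mathcal D\setminus\tilde S^\Phi_2$ and let $\boldsymbol\eta$, $v_\mu$, $\xi_\mu$ be given by (b); set $\boldsymbol\mu=\{e_t\sharp\boldsymbol\eta\}_{t\in I_\mu}\in\mathcal A^2_{I_\mu}(\mu)$. The semiconcavity inequality
\[\phi(y)-\phi(x)\le\langle\xi_\mu(x),y-x\rangle+C_\phi|y-x|^2\]
integrated against $\boldsymbol\eta$ (using $\gamma(0)=x$ $\boldsymbol\eta$-a.e.) gives
\[\sigma_\Phi(\mu_t)-\sigma_\Phi(\mu)\le\int\langle\xi_\mu(\gamma(0)),\gamma(t)-\gamma(0)\rangle\,d\boldsymbol\eta+C_\phi\int|\gamma(t)-\gamma(0)|^2\,d\boldsymbol\eta.\]
Decompose $\gamma(t)-\gamma(0)=t^{1+\alpha}(v_\mu(\gamma(0))+\rho_t(\gamma))$ with $|\rho_t(\gamma)|=\Delta^{v_\mu}_{\alpha,\gamma}(t)$. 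The first integral becomes $t^{1+\alpha}\int\langle\xi_\mu,v_\mu\rangle d\mu+t^{1+\alpha}\int\langle\xi_\mu(\gamma(0)),\rho_t\rangle d\boldsymbol\eta$, bounded via Cauchy--Schwarz and the fourth bullet by $-C_{2,\mu}t^{1+\alpha}+t^{1+\alpha}\|\xi_\mu\|_{L^2_\mu}(\int|\Delta^{v_\mu}_{\alpha,\gamma}(t)|^2 d\boldsymbol\eta)^{1/2}$. The second integral is controlled by $|a+b|^2\le 2(|a|^2+|b|^2)$ as $2t^{2(1+\alpha)}(\int|\rho_t|^2 d\boldsymbol\eta+\|v_\mu\|_{L^2_\mu}^2)$. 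Evaluating at $t=t_\mu=r(\sigma_\Phi(\mu))$ and invoking the fifth, sixth, and seventh bullets of (b), the bracket collapses precisely to $\le-2K\,t_\mu$, yielding
\[\sigma_\Phi(\mu_{t_\mu})-\sigma_\Phi(\mu)\le t_\mu^{1+\alpha}(-2K\,t_\mu)=-2K\,r(\sigma_\Phi(\mu))^{2+\alpha}=2\,Q(r(\sigma_\Phi(\mu)),\sigma_\Phi(\mu)),\]
which bounds the infimum on $[0,t_\mu]$ a fortiori. Forward invariance $e_{t_\mu}\sharp\boldsymbol\eta\in\mathcal D$ lets the iterative construction in the proof of Proposition \ref{prop:Tattain} stay inside $\mathcal D$, so the conclusion $\tilde T^\Phi_2(\mu)\le\boldsymbol T(\mu)$ applies.

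It then remains to compute $\boldsymbol T(\mu)=\int_0^{\sigma_\Phi(\mu)} dq/(K\,r(q)^{1+\alpha})$ by splitting at the breakpoints of $r$. On $[0,\min\{1,(2L)^{-\beta}\}]$ the integrand is $q^{-(1+\alpha)/\beta}/K$ with antiderivative $\beta q^{(\beta-\alpha-1)/\beta}/(K(\beta-\alpha-1))$; on the complementary ranges $r$ is constant equal to $\min\{1,(2L)^{-1}\}$ and the integrand is constant, giving the affine tails of the three-case bound. Finally, to invoke Theorem \ref{thm:suffSTLA} I take $U=\mathscr P_2(\mathbb R^d)$ and $C=\|\phi\|_\infty$ (finite since $\phi\in C^0_b(\mathbb R^d)$): this settles \textbf{\textup{(STLA)}} restricted to initial data in $\mathcal D$. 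The main obstacle is precisely the algebraic orchestration in the second paragraph: verifying that the linear, cross, and quadratic contributions from the semiconcavity expansion, once combined with Cauchy--Schwarz and $|a+b|^2\le 2(|a|^2+|b|^2)$, assemble exactly into the structure of the seventh bullet of (b)---recognizing this calibration is what makes the choice $Q(r,q)=-K r^{2+\alpha}$ inevitable.
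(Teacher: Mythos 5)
Your proposal is correct and follows essentially the same route as the paper's proof: the same semiconcavity expansion along $\boldsymbol\eta$ with the decomposition through $\Delta^{v_\mu}_{\alpha,\gamma}$, Cauchy--Schwarz and the elementary quadratic bound, leading to $\sigma_\Phi(\mu_{t_\mu})-\sigma_\Phi(\mu)\le -2K\,t_\mu^{\alpha+2}$, and the same choice $r(q)=\min\{1,(2L)^{-1},q^{1/\beta}\}$ with $Q$ equal to $-K\,r(q)^{\alpha+2}$ on the relevant diagonal (the paper writes $Q(t,q)=-K\,t^{\alpha+1}r(q)$, which coincides with yours where it matters), then Proposition \ref{prop:Tattain} and Theorem \ref{thm:suffSTLA}. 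Your explicit evaluation of $\boldsymbol T(\mu)$ at the breakpoints of $r$ and your remark on forward invariance in $\mathcal D$ only spell out steps the paper leaves implicit.
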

\begin{proof}
Indeed, for $\boldsymbol\eta$-a.e. $(x,\gamma)\in\mathbb R^d\times\Gamma_{I_\mu}$ we have
\begin{align*}
\phi(\gamma(t_\mu))-\phi(\gamma(0))
\le&\langle \xi_\mu(\gamma(0)), \gamma(t_\mu)-\gamma(0)\rangle+C_\phi|\gamma(t_\mu)-\gamma(0)|^2\\
\le&t_\mu^{\alpha+1}\langle \xi_\mu(\gamma(0)),v_\mu(\gamma(0))\rangle+t_\mu^{\alpha+1}|\xi_\mu(\gamma(0))|\Delta^{v_\mu}_{\alpha,\gamma}(t_\mu)+\\
&+C_\phi t_\mu^{2(\alpha+1)}\left(\Delta^{v_\mu}_{\alpha,\gamma}(t_\mu)+|v_\mu(\gamma(0))|\right)^2.
\end{align*}
Integrating w.r.t. $\boldsymbol\eta$ and using H\"older's inequality yields 
\begin{equation}\label{eq:condLsigma}
\begin{split}
\sigma_{\Phi}(\mu_{t_\mu})-&\sigma_{\Phi}(\mu)\le\\
\le&-C_{2,\mu} t_\mu^{\alpha+1}+\int_{\mathbb R^d\times\Gamma_{I_\mu}}|\xi_\mu(x)|\cdot t_\mu^{\alpha+1}\Delta^{v_\mu}_{\alpha,\gamma}(t_\mu)\,d\boldsymbol\eta(x,\gamma)+2 C_\phi(C_{3,\mu}^2 t_\mu^2+C_{4,\mu}^2)t_\mu^{2(\alpha+1)}\\
\le&-C_{2,\mu} t_\mu^{\alpha+1}+t_\mu^{\alpha+2}\|\xi_\mu\|_{L^2_\mu}\cdot C_{3,\mu}+2 C_\phi(C_{3,\mu}^2 t_\mu^2+C_{4,\mu}^2)t_\mu^{2(\alpha+1)}\\
\le&t_\mu^{\alpha+1}\left(-C_{2,\mu}+C_{3,\mu}\|\xi_\mu\|_{L^2_\mu}t_\mu+2 C_\phi(C_{3,\mu}^2 t_\mu^2+C_{4,\mu}^2)t_\mu^{\alpha+1}\right)\\
\le&- 2K\cdot t_\mu^{\alpha+1}\cdot  t_\mu= -2K\,t_\mu^{\alpha+2}.
\end{split}
\end{equation}
Choose 
\begin{align*}r(q):=\min\left\{1,\frac{1}{2L},q^{1/\beta}\right\},&&Q(t,q):=-K\cdot t^{\alpha+1}\cdot r(q).\end{align*}
In particular, we have $r(q)=0$ if and only if $q=0$, $Q(r(q),q)<0$ if $q\ne 0$,
\[\dfrac{r(q)}{|Q(r(q),q)|}=\dfrac{1}{K\min\left\{1,\frac{1}{(2L)^{\alpha+1}},q^{\frac{\alpha+1}{\beta}}\right\}}=\dfrac{1}{K}\max\left\{1,(2L)^{\alpha+1},q^{-\frac{\alpha+1}{\beta}}\right\},\]
which is a decreasing integrable function of $q$. Furthermore, we notice that by definition $t_\mu=r(\sigma_\Phi(\mu))$ and $Q(r(\sigma_\Phi(\mu)),\sigma_\Phi(\mu))=-K\,t_\mu^{\alpha+2}$. Thus, from \eqref{eq:condLsigma} we get
\[\sigma_{\Phi}(\mu_{r(\sigma_\Phi(\mu))})-\sigma_{\Phi}(\mu)\le2 Q(r(\sigma_\Phi(\mu)),\sigma_\Phi(\mu)),\]
and so we showed that $\tilde S^\Phi_2$ is $(r,Q)$-attainable.
The result now follows from Theorem \ref{thm:suffSTLA} and Proposition \ref{prop:Tattain}.
\end{proof}

\section{A brief comparison with classical attainability}\label{sec:final}
As reported in p. 352 \cite{CD} and at the beginning of Sec. 6.1 in \cite{Car}, we recall that if $(\Omega,\mathcal B,\mathbb P)$ is a sufficiently 
``rich'' probability space, i.e., $\Omega$ is a complete separable metric space, $\mathcal B$ is the Borel $\sigma$-algebra on $\Omega$, and $\mathbb P$ 
is an atomless Borel probability measure, given any $\mu_1,\mu_2\in\mathscr P_2(\mathbb R^d)$
there exist $X_1,X_2\in L^p_{\mathbb P}(\Omega)$ such that $\mu_i=X_i\sharp\mathbb P$, $i=1,2$, and $W_p(\mu_1,\mu_2)=\|X_1-X_2\|_{L^p_{\mathbb P}}$. 
For instance, we can take $\Omega=[0,1]$ endowed with the restriction of the Lebesgue measure to $[0,1]$.
This allows to use the well-known differential structure on $L^p_{\mathbb P}(\Omega)$ (the case $p=2$ is the most common, in particular in the context of \emph{mean field games}) 
in order to formulate the problem and possible derive finer properties of regularity for the solutions, by relying for instance on the theory of viscosity solution in infinite-dimensional 
Banach spaces.
For instance, in this setting (more oriented to a stochastic process interpretation) the representation of Remark \ref{rmk:SPLp} can be expressed as follows.

\begin{corollary}[Stochastic SP]\label{cor:stochSP}
Let $(\Omega,\mathcal F,\mathbb P)$ be a reference probability space, where $\Omega$ is a Polish space and $\mathbb P\in\mathscr P(\Omega)$ an atomless measure. Then, $\boldsymbol\mu=\{\mu_t\}_{t\in [0,T]}\subseteq\mathscr P_2(\mathbb R^d)$ is an admissible trajectory if and only if there exists a stochastic process $X=X(\cdot)$ with
\[\Omega\ni\omega\mapsto X(\cdot):=X(\cdot,\omega)\in AC(0,T)\cap C([0,T];\mathbb R^d)\]
such that 
\begin{itemize}
\item $\mu_t=X_t\sharp\mathbb P$ for all $t\in[0,T]$;
\item $\dot X(t)\in F(\mu_t,X(t))$ for a.e. $t\in[0,T]$.
\end{itemize}
\end{corollary}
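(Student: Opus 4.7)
The strategy is to use the Superposition Principle (Proposition \ref{prop:SP} and Remark \ref{rmk:SPLp}) to reduce the statement to a change of parametrization: admissible trajectories are in bijection with measures $\boldsymbol\eta$ on $\mathbb R^d\times\Gamma_T$ concentrated on solutions of the differential inclusion, and such measures correspond to stochastic processes on $(\Omega,\mathcal F,\mathbb P)$ via a measure-preserving isomorphism. The hypothesis that $\mathbb P$ is atomless on the Polish space $\Omega$ is exactly what is needed to apply the classical isomorphism theorem for standard Borel probability spaces, which guarantees, for any Borel probability measure $\boldsymbol\eta$ on the Polish space $\mathbb R^d\times\Gamma_T$, the existence of a Borel map $\Psi:\Omega\to\mathbb R^d\times\Gamma_T$ with $\Psi\sharp\mathbb P=\boldsymbol\eta$.

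For the necessity part, I would start from an admissible trajectory $\boldsymbol\mu=\{\mu_t\}_{t\in[0,T]}$ and use the Superposition Principle to obtain $\boldsymbol\eta\in\mathscr P(\mathbb R^d\times\Gamma_T)$ representing $\boldsymbol\mu$, i.e., $\mu_t=e_t\sharp\boldsymbol\eta$ and $\boldsymbol\eta$-a.e.\ $(x,\gamma)$ satisfies $\gamma\in AC([0,T])$, $\gamma(0)=x$, $\dot\gamma(t)\in F(\mu_t,\gamma(t))$ a.e. I would then invoke the isomorphism theorem to produce a Borel map $\Psi:\Omega\to\mathbb R^d\times\Gamma_T$ with $\Psi\sharp\mathbb P=\boldsymbol\eta$; writing $\Psi(\omega)=(Y(\omega),\Gamma(\omega))$, define the stochastic process by $X(t,\omega):=\Gamma(\omega)(t)=e_t\circ\Psi(\omega)$. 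Pulling back the almost-everywhere properties of $\boldsymbol\eta$ to $\mathbb P$ via $\Psi$ gives $X(\cdot,\omega)\in AC([0,T];\mathbb R^d)$ and $\dot X(t,\omega)\in F(\mu_t,X(t,\omega))$ for $\mathbb P$-a.e.\ $\omega$ and a.e.\ $t$, while $X_t\sharp\mathbb P=e_t\sharp(\Psi\sharp\mathbb P)=e_t\sharp\boldsymbol\eta=\mu_t$.

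For the sufficiency part, given a process $X$ as in the statement, I would define $\widetilde\Psi:\Omega\to\mathbb R^d\times\Gamma_T$ by $\widetilde\Psi(\omega):=(X(0,\omega),X(\cdot,\omega))$, which is measurable since $t\mapsto X(t,\omega)$ is continuous and $(t,\omega)\mapsto X(t,\omega)$ is jointly measurable so that $\omega\mapsto X(\cdot,\omega)$ takes values in $\Gamma_T=C([0,T];\mathbb R^d)$ in a Borel fashion. Setting $\boldsymbol\eta:=\widetilde\Psi\sharp\mathbb P$, one checks directly that $e_t\sharp\boldsymbol\eta=X_t\sharp\mathbb P=\mu_t$ and that $\boldsymbol\eta$ is concentrated on pairs $(x,\gamma)$ with $\gamma(0)=x$ and $\dot\gamma(t)\in F(\mu_t,\gamma(t))$ for a.e.\ $t$. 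Applying the Superposition Principle in the converse direction yields that $\boldsymbol\mu$ is an admissible trajectory.

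The main obstacle is purely the measure-theoretic transfer between the canonical representation on $\mathbb R^d\times\Gamma_T$ and the abstract probability space $(\Omega,\mathcal F,\mathbb P)$: everything dynamical has already been packaged in Proposition \ref{prop:SP}, so the only genuine ingredient here is the isomorphism theorem between atomless standard Borel probability spaces, which is classical. A minor technical point to verify carefully is the measurability of $\omega\mapsto X(\cdot,\omega)$ as a map into the Polish space $\Gamma_T$, which follows from the continuity in $t$ together with separability of $C([0,T];\mathbb R^d)$.
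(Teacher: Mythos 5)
Your proposal is correct and follows essentially the same route as the paper: reduce everything to Proposition \ref{prop:SP}, use the atomlessness of $\mathbb P$ on the Polish space $\Omega$ to obtain a Borel map pushing $\mathbb P$ onto the representing measure $\boldsymbol\eta$ (the paper cites Lemma 5.29 of \cite{CD} for exactly this), and set $X_t=e_t\circ\Psi$; the converse direction via $\boldsymbol\eta:=\widetilde\Psi\sharp\mathbb P$ is the same direct transfer the paper leaves implicit.
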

\begin{proof}
The proof is a consequence of Proposition \ref{prop:SP}, it sufficies to give the relation between the measure $\boldsymbol\eta$ of Proposition \ref{prop:SP} and the stochastic process $X$. By e.g. Lemma 5.29 in \cite{CD}, there exists a Borel map $\mathscr V:\Omega\to \mathbb R^d\times \Gamma_T$ such that $\boldsymbol\eta=\mathscr V\sharp\mathbb P$. We can thus conclude by setting $X_t:=e_t\circ\mathscr V$ for all $t\in[0,T]$, indeed we have $\mu_t=e_t\sharp\boldsymbol\eta=X_t\sharp\mathbb P$.
\end{proof}

Here we will not follow this approach, since it is not in the purposes of the present paper to enter into this theory. However we actually implemented this more stochastic approach in a similar context in the preprint \cite{CMQ}. Indeed, there our interest is the study of a viability problem in the probability measure space $(\mathscr P_2(\mathbb R^d),W_2)$  by means of a suitable lifted Hamiltonian in  $L^2(\Omega)$.
For completeness, we mention that a theory of well-posedness for Hamilton-Jacobi equations in metric spaces has been introduced and developed for instance by \cite{AF14,AGa,FK06, FN12,GT2019,GS}.

\medskip

In this section, we provide viscosity results related to our study with the purpose to compare the concept of $(r,Q)$-attainability given in Definition \ref{def:Sattain} with the classical one provided by \cite{KQ} in the finite dimensional framework.
For this sake, we first study an Hamilton-Jacobi-Bellman equation associated with our time-optimal control problem in a suitable viscosity sense. In particular, we prove that the minimum time function is a viscosity supersolution of an HJB equation similarly to what occurs in the finite dimensional case.

\begin{definition}[Superdifferential]
Let $1<p<+\infty$, $U:\mathscr P_p(\mathbb R^d)\to\mathbb R$, $\mu\in\mathscr P_p(\mathbb R^d)$ and let $p'$ be the conjugate exponent of $p$. We say that $q\in L^{p'}(\mathbb R^d)$ belongs to the viscosity \emph{superdifferential} of $U$ at $\mu$, and we write $q\in D^+U(\mu)$, if for all $\nu\in\mathscr P_p(\mathbb R^d)$ and all $\pi\in\Pi(\mu,\nu)$ we have
\[U(\nu)\le U(\mu)+\int_{\mathbb R^d\times\mathbb R^d}\langle q(x),y-x\rangle\,d\pi(x,y)+o\left(\left[\int_{\mathbb R^d\times\mathbb R^d}|x-y|^p\,d\pi(x,y)\right]^{1/p}\right).\]
Similarly, the set of viscosity \emph{subdifferentials} of $U$ at $\mu$ is defined by $D^-U(\mu)=-D^+(-U)(\mu)$. 
\end{definition}
\begin{definition}[Viscosity solution]
Let $1<p<+\infty$, and $U:\mathscr P_p(\mathbb R^d)\to\mathbb R$. Let $\mathscr H(\mu,q)$ be defined for any $\mu\in\mathscr P_p(\mathbb R^d)$ and $q\in L^{p'}_\mu(\mathbb R^d)$. We say that
\begin{itemize}
\item $U$ is a \emph{viscosity subsolution} of $\mathscr H(\mu, DU(\mu))=0$ if $U$ is u.s.c. and $\mathscr H(\mu,q_\mu)\le 0$ for all $q_\mu\in D^+U(\mu)$ and $\mu\in\mathscr P_p(\mathbb R^d)$;
\item $U$ is a \emph{viscosity supersolution} of $\mathscr H(\mu, DU(\mu))=0$ if $U$ is l.s.c. and $\mathscr H(\mu,p_\mu)\ge 0$ for all $p_\mu\in D^-U(\mu)$ and $\mu\in\mathscr P_p(\mathbb R^d)$;
\item $U$ is a \emph{viscosity solution} of $\mathscr H(\mu, DU(\mu))=0$ if it is both a super and a subsolution.
\end{itemize}
\end{definition}

In the following, we prove that the minimum time function is a viscosity solution of an Hamilton-Jacobi-Bellman equation with the Hamiltonian $\mathscr H$ defined as follows
\begin{equation}\label{eq:defHamilt}
\mathscr H(\mu,q(\cdot)):=-1-\inf_{\substack{v\in L^p_{\mu}\\ v(x)\in F(\mu,x)}}\langle q(\cdot), v(\cdot)\rangle_{L^{p'},L^p}
\end{equation}
for $\mu\in\mathscr P_p(\mathbb R^d)$, $q(\cdot)\in L^{p'}_{\mu}(\mathbb R^d)$
where $p'$ is the conjugate exponent of $1<p<+\infty$.

\begin{proposition}\label{prop:HJBT}
Let $1<p<+\infty$ and  $\tilde S_p$ be 
a generalized target. Assume Hypothesis \ref{HP} for $F$, and that \textbf{\textup{(STLA)}} holds for the system. Then the minimum time function $\tilde T_p$ is a viscosity solution of the HJB equation $\mathscr H(\mu, D\tilde T_p(\mu))=0$, with Hamiltonian $\mathscr H$ defined in \eqref{eq:defHamilt}.
\end{proposition}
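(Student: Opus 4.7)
The plan is to adapt the classical Bardi--Capuzzo Dolcetta argument for Bellman's equation of the minimum time problem, relying on the Dynamic Programming Principle in Proposition~\ref{prop:DPP}(3), the initial-velocity perturbation estimates in Lemma~\ref{lemma:invelset}, and the continuity of $\tilde T_p$ at finiteness points granted by STLA through Proposition~\ref{prop:STLAcontT}. This continuity removes any concern about the lower/upper semicontinuity hypotheses in the definition of viscosity sub/super\-solution. Throughout I fix $\mu$ with $\tilde T_p(\mu)<+\infty$; for the supersolution step I further take $\mu\notin \tilde S_p^\Phi$, the target itself being handled by the standard boundary condition $\tilde T_p\equiv 0$ on $\tilde S_p^\Phi$.

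\emph{Subsolution step.} Let $q\in D^+\tilde T_p(\mu)$ and let $v_\mu\in L^p_\mu$ be an arbitrary Borel selection of $F(\mu,\cdot)$. Lemma~\ref{lemma:invelset}(1) furnishes $\boldsymbol\eta\in\mathscr P(\mathbb R^d\times\Gamma_T)$ representing an admissible trajectory $\boldsymbol\mu$ starting from $\mu$, along which $\|(\gamma(t)-\gamma(0))/t-v_\mu(\gamma(0))\|_{L^p_{\boldsymbol\eta}}\to 0$ as $t\to 0^+$. Setting $\pi_t:=(e_0,e_t)\sharp\boldsymbol\eta\in\Pi(\mu,\mu_t)$ and combining the DPP inequality $\tilde T_p(\mu_t)-\tilde T_p(\mu)\ge -t$ with the superdifferential definition applied to $\pi_t$ yields
\[
-t\le \int \langle q(\gamma(0)),\gamma(t)-\gamma(0)\rangle\,d\boldsymbol\eta(x,\gamma) + o(W_p(\mu,\mu_t)).
\]
Dividing by $t$, using $W_p(\mu,\mu_t)=O(t)$ from Lemma~\ref{lemma:moment}, and splitting the integrand as $\langle q,v_\mu\rangle+\langle q,(\gamma(t)-\gamma(0))/t-v_\mu\rangle$ to control the correction via H\"older's inequality, I obtain $-1\le \int\langle q,v_\mu\rangle\,d\mu$. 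Arbitrariness of $v_\mu$ gives $\inf_{v}\int\langle q,v\rangle\,d\mu\ge -1$, hence $\mathscr H(\mu,q)\le 0$.

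\emph{Supersolution step.} Let $q\in D^-\tilde T_p(\mu)$ and pick an optimal trajectory $\boldsymbol\mu^\ast$ via Proposition~\ref{prop:DPP}(1), so that $\tilde T_p(\mu^\ast_t)=\tilde T_p(\mu)-t$ on $[0,\tilde T_p(\mu)]$. By the Superposition Principle (Proposition~\ref{prop:SP}) together with Lemma~\ref{lemma:invelset}(2) I select a representing $\boldsymbol\eta^\ast$ with $\int d^p_{F(\mu,x)}((\gamma(t)-\gamma(0))/t)\,d\boldsymbol\eta^\ast\to 0$. Setting $\pi_t^\ast:=(e_0,e_t)\sharp\boldsymbol\eta^\ast$, the subdifferential inequality and the optimality relation combine to
\[
-t\ge \int\langle q(\gamma(0)),\gamma(t)-\gamma(0)\rangle\,d\boldsymbol\eta^\ast + o(W_p(\mu,\mu^\ast_t)).
\]
Disintegrating $\boldsymbol\eta^\ast=\mu\otimes\eta_x^\ast$ w.r.t.\ $e_0$, I introduce the averaged field $v_t(x):=t^{-1}\int_{\Gamma_T}(\gamma(t)-x)\,d\eta_x^\ast$; Jensen's inequality forces $d_{F(\mu,x)}(v_t(x))\to 0$ in $L^p_\mu$, while the family $\{v_t\}$ is bounded in $L^p_\mu$ by the Lipschitz estimates of Lemma~\ref{lemma:moment}. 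Extracting a weak $L^p_\mu$-limit $v^\ast$ (reflexivity, $1<p<+\infty$) and applying Mazur's lemma together with the closed convexity of $F(\mu,\cdot)$ from Hypothesis~\ref{HP} secures $v^\ast(x)\in F(\mu,x)$ for $\mu$-a.e.\ $x$. Dividing by $t$ and passing to the limit yields $-1\ge\int\langle q,v^\ast\rangle\,d\mu\ge \inf_{v}\int\langle q,v\rangle\,d\mu$, whence $\mathscr H(\mu,q)\ge 0$.

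\emph{Main obstacle.} The substantive difficulty concentrates in the supersolution step: Lemma~\ref{lemma:invelset}(2) provides only an \emph{averaged} $L^p$-approach of trajectory velocities to the constraint set $F(\mu,x)$, with no pointwise or strong convergence of the difference quotients. Passing from this asymptotic statement to a bona fide selection $v^\ast(x)\in F(\mu,x)$ against which the cotangent vector $q\in L^{p'}_\mu$ can be tested linearly requires weak $L^p$-compactness together with a Mazur-type convexification; the closed convexity of $F(\mu,\cdot)$ is used in an essential way here. The subsolution step is comparatively cleaner because the selection $v_\mu$ is prescribed in advance and the admissible trajectory is then custom-built around it through Lemma~\ref{lemma:invelset}(1).
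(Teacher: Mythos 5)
Your proposal is correct and follows the paper's skeleton almost verbatim: continuity from \textbf{\textup{(STLA)}} via Proposition \ref{prop:STLAcontT}, the subsolution inequality obtained exactly as in the paper by testing the superdifferential against the plan $(e_0,e_t)\sharp\boldsymbol\eta$ built from Lemma \ref{lemma:invelset}(1) around a prescribed selection $v_\mu$, and the supersolution inequality obtained from the DPP along a (near-)optimal trajectory together with Lemma \ref{lemma:invelset}(2), disintegration w.r.t.\ $e_0$ and Jensen's inequality applied to the averaged difference quotients. The only genuine divergence is the final step of the supersolution argument: the paper does not extract any limit of the averaged fields $v_t$; instead it invokes Filippov's measurable selection theorem (\cite[Theorem 8.2.10, Corollary 8.2.13]{AuF}) to produce, for each small $t$, a projection $w_t\in L^p_\mu$ with $w_t(x)\in F(\mu,x)$ and $\|w_t-v_t\|_{L^p_\mu}\le \big(\int d^p_{F(\mu,x)}\big(\tfrac{\gamma(t)-\gamma(0)}{t}\big)d\boldsymbol\eta\big)^{1/p}\to 0$, then bounds $\int\langle q,w_t\rangle\,d\mu$ from below by the infimum defining $\mathscr H$ and absorbs the error via H\"older; your route instead uses boundedness of $\{v_t\}$ in $L^p_\mu$, reflexivity ($1<p<\infty$), a weak limit $v^\ast$, and Mazur's lemma with the convexity and closedness of $F(\mu,\cdot)$ to get an admissible limit selection. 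Both arguments are sound; the paper's projection trick is slightly more economical (no compactness, no limit field, and it makes the ``main obstacle'' you describe essentially disappear), while yours leans on the functional-analytic structure that $1<p<\infty$ provides anyway. Two further cosmetic differences: you use an exactly optimal trajectory from Proposition \ref{prop:DPP}(1) where the paper keeps an $\varepsilon$-suboptimal one and lets $\varepsilon\to 0^+$ (immaterial), and you explicitly exclude $\mu\in\tilde S_p^\Phi$ in the supersolution step, which the paper leaves implicit — a point in your favour, since at target measures $0\in D^-\tilde T_p(\mu)$ and the supersolution inequality cannot hold.
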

\begin{proof}
By \textbf{\textup{(STLA)}} assumption and Proposition \ref{prop:STLAcontT}, we get the continuity of $\tilde T_p$.

Let $\mu\in\mathscr P_p(\mathbb R^d)$.
Given a function $v_\mu\in L^p(\mu)$ with $v_\mu(x)\in F(\mu,x)$ for $\mu$-a.e. $x$, there exists an admissible trajectory $\boldsymbol\mu=\{\mu_t\}_{t\in [0,T]}$ represented by $\boldsymbol\eta$
satisfying Lemma \ref{lemma:invelset}(1).
According to the Dynamic Programming Principle (Proposition \ref{prop:DPP}(3)), for all $q\in D^+ \tilde T_p(\mu)$  and for all $\pi_t\in \Pi(\mu,\mu_t)$
\begin{align*}
0\le&\dfrac{\tilde T_p(\mu_t)-\tilde T_p(\mu)+t}{t}\\
\le&1+\dfrac{1}{t}\iint_{\mathbb R^d\times\mathbb R^d}\langle q(x),y-x\rangle d\pi_t(x,y)+\dfrac 1t o\left(\left[\iint_{\mathbb R^d\times\mathbb R^d}|x-y|^p\,d\pi_t(x,y)\right]^{1/p}\right)\\
\le&1+\iint_{\mathbb R^d\times\Gamma_T} \langle q(x),\dfrac{\gamma(t)-\gamma(0)}{t}\rangle d\boldsymbol\eta(x,\gamma)+\dfrac{1}{t}o\left(\left[\iint_{\mathbb R^d\times\Gamma_T}|\gamma(t)-\gamma(0)|^p\,d\boldsymbol\eta(x,\gamma)\right]^{1/p}\right)\\
\end{align*}
where we chose $\pi_t=(e_t,e_0)\sharp \boldsymbol\eta$ in the last line. By letting $t\to 0^+$, Lemma \ref{lemma:invelset}(1) yields
\begin{align*}
0\le&1+\int_{\mathbb R^d} \langle q(x),v_\mu(x)\rangle \,d\mu(x).
\end{align*}
By taking the infimum on $v_\mu\in L^p(\mu)$ s.t. $v_\mu(x)\in F(\mu,x)$ for $\mu$-a.e. $x\in\mathbb R^d$, we have for all $\pi^{(\varepsilon)}_t\in \Pi(\mu,\mu_t)$
\[\mathscr H(\mu, q(\mu))\le 0,\]
thus $\tilde T_p(\cdot)$ is a viscosity subsolution of $\mathscr H(\mu,D\tilde T_p(\mu))=0$.

\medskip

On the other hand, from the Dynamic Programming Principle,  for any $\varepsilon>0$ we also get the existence of an admissible trajectory $\boldsymbol\mu^{(\varepsilon)}=\{\mu^{(\varepsilon)}_t\}_{t\in [0,T]}$, 
represented by $\boldsymbol\eta^{(\varepsilon)}$ satisfying Lemma \ref{lemma:invelset}(2), such that for all $p\in D^- \tilde T_p(\mu)$ and for all $\pi^{(\varepsilon)}_t\in \Pi(\mu,\mu^{(\varepsilon)}_t)$
\begin{align*}
\varepsilon\ge&\dfrac{\tilde T_p(\mu^{(\varepsilon)}_t)-\tilde T_p(\mu)+t}{t}\\
\ge& 1+\dfrac{1}{t}\int_{\mathbb R^d}\langle p(x),y-x\rangle d\pi^{(\varepsilon)}_t(x,y)+\dfrac 1to\left(\left[\iint_{\mathbb R^d\times\mathbb R^d}|x-y|^p\,d\pi^{(\varepsilon)}_t(x,y)\right]^{1/p}\right)\\
\ge& 1+\int_{\mathbb R^d\times\Gamma_T}\langle p(x),\dfrac{\gamma(t)-\gamma(0)}{t}\rangle d\boldsymbol\eta^{(\varepsilon)}(x,\gamma)+\dfrac 1t o\left(\left[\iint_{\mathbb R^d\times\mathbb R^d}|\gamma(t)-\gamma(0)|^p\,d\boldsymbol\eta^{(\varepsilon)}(x,\gamma)\right]^{1/p}\right)\\
\end{align*}
where we chose $\pi^{(\varepsilon)}_t=(e_t,e_0)\sharp \boldsymbol\eta^{(\varepsilon)}$ in the last line.
Consider the disintegration of $\boldsymbol\eta^{(\varepsilon)}$ with respect to the evaluation operator at time $t=0$, i.e. $\boldsymbol\eta^{(\varepsilon)}=\mu\otimes \eta^{(\varepsilon)}_x$.
By Filippov's measurable selection Theorem (see \cite[Theorem 8.2.10, Corollary 8.2.13]{AuF})
there exists $w_t\in L^p_{\mu}(\mathbb R^d)$ such that $w_t(x)\in F(\mu,x)$ for $\mu$-a.e. $x\in\mathbb R^d$ and
\begin{align*}
\int_{\mathbb R^d}\left|w_t(x)-\int_{e_0^{-1}(x)}\dfrac{\gamma(t)-\gamma(0)}{t}\,d\eta^{(\varepsilon)}_x\right|^p\,d\mu(x)=&\int_{\mathbb R^d}d^p_{F(\mu,x)}\left(\int_{e_0^{-1}(x)}\dfrac{\gamma(t)-\gamma(0)}{t}\,d\eta^{(\varepsilon)}_x\right)\,d\mu(x)\\
\le&\int_{\mathbb R^d\times\Gamma_T}d^p_{F(\mu,x)}\left(\dfrac{\gamma(t)-\gamma(0)}{t}\right)\,d\boldsymbol\eta^{(\varepsilon)}(x,\gamma),
\end{align*}
where we used Jensen's inequality in the last step. The last term vanishes as $t\to 0^+$ by Lemma  \ref{lemma:invelset}(2), and therefore for $t$ sufficiently small
\begin{align*}
\varepsilon\ge&1-\varepsilon\|p\|_{L^{p'}_{\mu}}+\int_{\mathbb R^d}\langle p(x),w_t(x)\rangle d\mu(x)+\dfrac 1t o\left(t\cdot(\varepsilon+\|w_t\|_{L^p_{\mu}})\right)\\
\ge& -\mathscr H(\mu,p(\cdot))-\varepsilon\|p\|_{L^{p'}_{\mu}}+\dfrac 1t o\left(t\cdot(\varepsilon+\|w_t\|_{L^p_{\mu}})\right)
\end{align*}
Recalling that for $\mu$-a.e. $x\in\mathbb R^d$ we have
\[w_t(x)\in F(\mu,x)\subseteq F(\delta_0,0)+\mathrm{Lip}(F)(W_p(\mu,\delta_0)+|x|),\]
we have that $\|w_t\|_{L^p_\mu}$ is bounded uniformly w.r.t. $t,\varepsilon$, and so by letting $t\to 0^+$ and $\varepsilon\to 0^+$ we get 
\[\mathscr H(\mu,p(\cdot))\ge0.\]
Thus $\tilde T_p(\cdot)$ is also a viscosity supersolution of $\mathscr H(\mu,D\tilde T_p(\mu))=0$.
\end{proof}

\begin{remark}
It seems natural that the combined use of \textbf{\textup{(STLA)}} condition, of Gr\"onwall estimate (see Theorem \ref{thm:filippov}) together with a semiconcavity assumption for the distance to the target
can be used to derive a semiconcavity estimate for the generalized minimum time function, pretty much as in the finite-dimensional case (see e.g. \cite{CS0}).
This topic will be subject of future investigation.
\end{remark}

\medskip

To conclude, we want to perform a comparison between the concept of $(r,Q)$-attainability of Definition \ref{def:Sattain} and the results obtained in \cite{KQ}.
\begin{proposition}
Let $1<p<+\infty$. Assume Hypothesis \ref{HP} for $F$ and that the generalized target $\tilde S_p^\Phi$ is $(r,Q)$-attainable, with $Q=Q(t,s)$ differentiable and such that 
$\partial_t Q(0,\cdot)<0$. Then
$\sigma_\Phi(\cdot)$, defined in Definition \ref{def:Lsigma}, is a viscosity supersolution of 
$1+\mathscr H(\mu, D\sigma_\Phi(\mu))=0$, in particular
\begin{equation}\label{eq:KQ}
\inf_{\substack{v\in L^p_{\mu}\\ v(x)\in F(\mu,x)}}\langle p(x), v_\mu(x)\rangle < 0,
\end{equation}
for all $p(\cdot)\in D^-\sigma_{\Phi}(\mu)$, $\mu\in\mathscr P_p(\mathbb R^2)\setminus\tilde S^\Phi_p$.
\end{proposition}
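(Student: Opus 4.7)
The plan is to fix $\mu\in\mathscr P_p(\mathbb R^d)\setminus\tilde S^\Phi_p$ (so $s:=\sigma_\Phi(\mu)>0$) and $p(\cdot)\in D^-\sigma_\Phi(\mu)$, and prove directly the strict bound $\inf_v\int_{\mathbb R^d}\langle p(x),v(x)\rangle\,d\mu(x)<0$, where the infimum ranges over $L^p_\mu$-selections of $F(\mu,\cdot)$; this gives both the supersolution property for $1+\mathscr H(\mu,D\sigma_\Phi(\mu))=0$ and the strict inequality \eqref{eq:KQ}. First, invoke $(r,Q)$-attainability (Definition \ref{def:Sattain}) at $\mu$ to obtain an admissible trajectory $\boldsymbol\mu=\{\mu_t\}_{t\in[0,r(s)]}$ with $\inf_{t\in[0,r(s)]}(\sigma_\Phi(\mu_t)-s)\le 2Q(r(s),s)<0$, and use the Superposition Principle (Proposition \ref{prop:SP}) to represent it by some $\boldsymbol\eta\in\mathscr P(\mathbb R^d\times\Gamma_{r(s)})$ supported on $(x,\gamma)$ with $\gamma(0)=x$ and $\dot\gamma(t)\in F(\mu_t,\gamma(t))$ a.e.

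For each $t\in(0,r(s)]$, plug $\pi_t:=(e_0,e_t)\sharp\boldsymbol\eta\in\Pi(\mu,\mu_t)$ into the subdifferential inequality for $p$ and bound $[\int|\gamma(t)-\gamma(0)|^p\,d\boldsymbol\eta]^{1/p}\le Ct$ via Lemma \ref{lemma:moment}; dividing by $t$ yields
\[
\int\Big\langle p(x),\frac{\gamma(t)-\gamma(0)}{t}\Big\rangle\,d\boldsymbol\eta \le \frac{\sigma_\Phi(\mu_t)-s}{t}+o(1)\qquad\text{as }t\to 0^+.
\]
Disintegrate $\boldsymbol\eta=\mu\otimes\eta_x$ w.r.t.\ $e_0$ and define the conditional average velocity $w_t(x):=\int(\gamma(t)-\gamma(0))/t\,d\eta_x(\gamma)$. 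By Jensen's inequality and Lemma \ref{lemma:invelset}(2), $\int d^p_{F(\mu,x)}(w_t(x))\,d\mu\to 0$ as $t\to 0^+$, so a Filippov-type measurable selection (as in the proof of Proposition \ref{prop:HJBT}) furnishes $\hat v_t\in L^p_\mu$ with $\hat v_t(x)\in F(\mu,x)$ and $\|w_t-\hat v_t\|_{L^p_\mu}\to 0$; consequently
\[
\inf_v\int\langle p,v\rangle\,d\mu \le \int\langle p,\hat v_t\rangle\,d\mu \le \frac{\sigma_\Phi(\mu_t)-s}{t}+o(1).
\]

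The main obstacle is converting the global decrement supplied by $(r,Q)$-attainability into the infinitesimal estimate $\liminf_{t\to 0^+}(\sigma_\Phi(\mu_t)-s)/t\le 2\,\partial_t Q(0,s)<0$ along a suitable admissible trajectory, since the definition only controls the infimum over the full interval $[0,r(s)]$. The differentiability hypothesis $\partial_t Q(0,\cdot)<0$ is decisive: by Taylor expansion near $t=0$ one has $2Q(\tau,s)\sim 2\tau\,\partial_t Q(0,s)$ for small $\tau$ (with the natural normalization $Q(0,s)\le 0$ forced by the fact that the infimum over $[0,r(s)]$ is $\le 0$). A concatenation/restriction argument patterned on the iterative construction in the proof of Proposition \ref{prop:Tattain}, applied along a sequence $\tau_n\to 0^+$, produces admissible sub-trajectories on $[0,\tau_n]$ whose decrement is controlled by $2Q(\tau_n,s)+o(\tau_n)$; dividing by $\tau_n$ and passing to the limit yields the required infinitesimal rate. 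Combined with the previous display, this gives
\[
\inf_v\int\langle p,v\rangle\,d\mu \le 2\,\partial_t Q(0,s)<0,
\]
establishing simultaneously the supersolution property and the strict bound \eqref{eq:KQ}.
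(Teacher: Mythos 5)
Your overall mechanism --- testing the subdifferential inequality with the plan $(e_0,e_t)\sharp\boldsymbol\eta$ for a superposition representation of an admissible trajectory, disintegrating, combining Jensen's inequality with a Filippov-type measurable selection as in Proposition \ref{prop:HJBT} and Lemma \ref{lemma:invelset}, dividing by $t$ and letting $t\to0^+$ --- is exactly the paper's. The problem is the step you yourself single out as the main obstacle: producing, along admissible trajectories issuing from the \emph{fixed} $\mu$, the infinitesimal rate $\liminf_{t\to0^+}\,\bigl(\sigma_\Phi(\mu_t)-\sigma_\Phi(\mu)\bigr)/t\le 2\,\partial_t Q(0,\sigma_\Phi(\mu))$. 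Your proposed resolution (a concatenation/restriction argument patterned on Proposition \ref{prop:Tattain}, applied along $\tau_n\to0^+$) does not work as sketched. Definition \ref{def:Sattain} only guarantees that the infimum of the decrement over the \emph{whole} interval $[0,r(\sigma_\Phi(\mu))]$ is at most $2Q(r(\sigma_\Phi(\mu)),\sigma_\Phi(\mu))$; the time at which this (near-)infimum is attained may be bounded away from $0$, so restricting that trajectory to $[0,\tau_n]$ yields no decrement estimate at scale $\tau_n$. Concatenation only produces trajectories on longer intervals, and the iteration in Proposition \ref{prop:Tattain} samples at times $t_i$ of order $r(\sigma_i)$ (not infinitesimal) and moves the base point to $\mu^{(i)}\neq\mu$, whereas the viscosity subdifferential inequality --- and in particular its $o(\cdot)$ remainder --- is anchored at $\mu$: with $t$ bounded away from zero that remainder cannot be discarded. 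So the decisive limit is asserted rather than proved, and the literal Definition \ref{def:Sattain} does not deliver it by the route you indicate.

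For comparison, the paper does not attempt such a derivation: in its proof the attainability property is invoked at every small scale, i.e., for every $\varepsilon>0$ and every $0<t\le\varepsilon$ one takes an admissible trajectory on $[0,t]$ whose decrement at time $t$ is controlled by $2Q(t,\sigma_\Phi(\mu))$ up to $\varepsilon$, and then the normalization $Q(0,\cdot)=0$ together with the differentiability of $Q$ gives $2\partial_t Q(0,\sigma_\Phi(\mu))\ge -1-\mathscr H(\mu,p(\cdot))$ in the limit --- in line with the ``sampled'' Petrov-type interpretation discussed right after the proposition. In other words, the paper reads the $(r,Q)$-condition as available at all small time scales with the decrement realized at the endpoint, rather than deducing this from a single application at scale $r(\sigma_\Phi(\mu))$. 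To make your argument complete you must either adopt that stronger reading explicitly as the working hypothesis, or supply a genuinely new argument for the infinitesimal rate; the concatenation/restriction device you propose does not furnish one.
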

\begin{proof}
Let $\mu\in\mathscr P_p(\mathbb R^d)\setminus\tilde S^\Phi_p$ and $\boldsymbol\mu\in \mathcal A^p_{[0,t]}(\mu)$ be an admissible trajectory represented by $\boldsymbol\eta$ according to Proposition \ref{prop:SP} and Remark \ref{rmk:SPLp}, with $0<t<r(\sigma_\Phi(q))$.
We notice that, for all $p(\cdot)\in D^-\sigma_{\Phi}(\mu)$ we have
\[\sigma_\Phi(\mu_{t})-\sigma_\Phi(\mu)\ge \int_{\mathbb R^d\times\Gamma_{[0,t]}} \langle p(x),\gamma(t)-\gamma(0)\rangle\,d\boldsymbol\eta(x,\gamma)+o\left(\|e_t-e_0\|_{L^p_{\boldsymbol\eta}}\right).\]
In particular, according to $(r,Q)$-attainability condition, for every $\varepsilon>0$ and $0<t\le\varepsilon$ we can find $\boldsymbol\mu^{(\varepsilon)}\in \mathcal A^p_{[0,t]}(\mu)$ represented by $\boldsymbol\eta^{(\varepsilon)}$
such that
\[\frac{2Q(t,\sigma_\Phi(\mu))}{t}+\varepsilon\ge \frac{1}{t}\iint_{\mathbb R^d\times\Gamma_{[0,t]}} \langle p(x),\gamma(t)-\gamma(0)\rangle\,d\boldsymbol\eta^{(\varepsilon)}(x,\gamma)+\frac{1}{t}o\left(\|e_t-e_0\|_{L^p_{\boldsymbol\eta^{(\varepsilon)}}}\right),\]
where we used the previous estimate and divided by $t>0$.
Recalling the hypothesis on $Q$ and that without loss of generality we can consider $Q(0,\cdot)=0$, then by letting $t\to 0^+$ and $\varepsilon\to 0^+$ we obtain
\[0>2\partial_t Q(0,\sigma_\Phi(\mu))\ge -1-\mathscr H(\mu,p(\cdot)).\]
Here, we used the same arguments used in Proposition \ref{prop:HJBT} when proving that $\tilde T_p$ is a viscosity supersolution of an HJB equation with Hamiltonian $\mathscr H$ defined in \eqref{eq:defHamilt}. We remind that in order to be a supersolution, the continuity of $\tilde T_p$ is not needed, and the l.s.c. is provided by Proposition \ref{prop:DPP}.

So, we got that $\sigma_\Phi(\cdot)$ is a viscosity supersolution of $1+\mathscr H(\mu, D\sigma_\Phi(\mu))=0$, thus the conclusion noting that the expression above can be equivalently written as
\[\inf_{\substack{v\in L^p_{\mu}\\ v(x)\in F(\mu,x)}}\langle p(x), v_\mu(x)\rangle \le 2\partial_t Q(0,\sigma_\Phi(\mu))<0.\]
\end{proof}

\begin{remark}
Notice that the expression obtained above in \eqref{eq:KQ} represents a natural counterpart of formula (27) in \cite{KQ} (and subsequent extensions, see Corollary 3.5 and Remark 3.3 of \cite{KQ}).
Here, the Lipschitz continuity requested in \cite{KQ} is replaced by an integrability assumption of the derivative $\partial_t Q$.
Therefore, under these hypothesis, $(r,Q)$-attainability can be seen as a \emph{sampled} form of the assumption in \cite{KQ}, indeed in our framework the decreasing condition (27) is checked
along admissible trajectories only at time steps of size given by $r(\cdot)$.
\end{remark}

\bigskip

\textbf{Acknowledgements}: G.C. has been partially supported by Cariplo Foundation and Regione Lombardia via project \emph{Variational Evolution Problems and Optimal Transport} and by MIUR PRIN 2015 project \emph{Calculus of Variations}, together with FAR funds of the Department of Mathematics of the University of Pavia.\\
The authors finally thank both the referees for their suggestions which definetely helped us to improve the presentation of the paper and to better frame it in the existing literature.

\begin{bibdiv}
\begin{biblist}
\bib{AF14}{article}{
   author={Ambrosio, Luigi},
   author={Feng, Jin},
   title={On a class of first order Hamilton-Jacobi equations in metric
   spaces},
   journal={J. Differential Equations},
   volume={256},
   date={2014},
   number={7},
   pages={2194--2245},
   issn={0022-0396},
   review={\MR{3160441}},
   doi={10.1016/j.jde.2013.12.018},
}

\bib{AGa}{article}{
   author={Ambrosio, Luigi},
   author={Gangbo, Wilfrid},
   title={Hamiltonian ODEs in the Wasserstein space of probability measures},
   journal={Comm. Pure Appl. Math.},
   volume={61},
   date={2008},
   number={1},
   pages={18--53},
  issn={0010-3640},
  review={\MR{2361303 (2009b:37101)}},
  doi={10.1002/cpa.20188},
}

\bib{AGS}{book}{
   author={Ambrosio, Luigi},
   author={Gigli, Nicola},
   author={Savar{\'e}, Giuseppe},
   title={Gradient flows in metric spaces and in the space of probability measures},
   series={Lectures in Mathematics ETH Z\"urich},
   edition={2},
   publisher={Birkh\"auser Verlag},
   place={Basel},
   date={2008},
   pages={x+334},
   isbn={978-3-7643-8721-1},
   review={\MR{2401600 (2009h:49002)}},
}

%



\bib{AuF}{book}{
   author={Aubin, Jean-Pierre},
   author={Frankowska, H{\'e}l{\`e}ne},
   title={Set-valued analysis},
   series={Modern Birkh\"auser Classics},
   note={Reprint of the 1990 edition [MR1048347]},
   publisher={Birkh\"auser Boston Inc.},
   place={Boston, MA},
   date={2009},
   pages={xx+461},
   isbn={978-0-8176-4847-3},
   review={\MR{2458436}},
}

\bib{Aver}{article}{
   author = {Averboukh, Yurii},
   title = {Viability Theorem for Deterministic Mean Field Type Control Systems},
   journal = {Set-Valued and Variational Analysis},
   volume = {26},
   date = {2018},
   number = {4},
   pages = {993--1008},
   issn = {1877-0541},
   doi = {10.1007/s11228-018-0479-2},
}

\bib{CS0}{article}{
   author={Cannarsa, Piermarco},
   author={Sinestrari, Carlo},
   title={Convexity properties of the minimum time function},
   journal={Calc. Var. Partial Differential Equations},
   volume={3},
   date={1995},
   number={3},
   pages={273--298},
   issn={0944-2669},
   review={\MR{1385289}},
   doi={10.1007/BF01189393},
}

\bib{CSb}{book}{
   author={Cannarsa, Piermarco},
   author={Sinestrari, Carlo},
   title={Semiconcave functions, Hamilton-Jacobi equations, and optimal
   control},
   series={Progress in Nonlinear Differential Equations and their
   Applications, 58},
   publisher={Birkh\"auser Boston Inc.},
   place={Boston, MA},
   date={2004},
   pages={xiv+304},
   isbn={0-8176-4084-3},
   review={\MR{2041617 (2005e:49001)}},
}

\bib{CFPT}{article}{
   author = {Caponigro, Marco},
   author = {Fornasier, Massimo},
   author = {Piccoli, Benedetto},
   author = {Tr\'elat, Emmanuel},
   title = {Sparse stabilization and control of alignment models},
   journal = {Mathematical Models and Methods in Applied Sciences},
   volume = {25},
   date = {2015},
   number = {3},
   pages = {521--564},
   doi = {10.1142/S0218202515400059},
}

\bib{Car}{book}{
  author={Cardaliaguet, Pierre},
  title={Notes on Mean Field Games},
  date={2013},
}

\bib{CD}{book}{
  author={Carmona, Ren{\'e}},
  author={Delarue, Fran\c{c}ois},
  title ={Probabilistic Theory of Mean Field Games with Applications, vol I},
  series={Probability Theory and Stochastic Modeling},
  volume={84},
  publisher={Springer International Publishing},
  date={2018},
  isbn={978-3-319-56437-1},
}

\bib{Cav}{article}{
   author={Cavagnari, Giulia},
   title={Regularity results for a time-optimal control problem in the space
   of probability measures},
   journal={Math. Control Relat. Fields},
   volume={7},
   date={2017},
   number={2},
   pages={213--233},
   issn={2156-8472},
   review={\MR{3639457}},
   doi={10.3934/mcrf.2017007},
}

\bib{CLOS}{article}{
   author = {Cavagnari, Giulia},
   author = {Lisini, Stefano},
   author = {Orrieri, Carlo},
   author = {Savar\'e, Giuseppe},
   title = {Lagrangian, Eulerian and Kantorovich formulations of multi-agent optimal control problems: equivalence and Gamma-convergence},
   status = {preprint},
}

\bib{CMNP}{article}{
   author={Cavagnari, Giulia},
   author={Marigonda, Antonio},
   author={Nguyen, Khai T.},
   author={Priuli, Fabio S.},
   title={Generalized Control Systems in the Space of Probability Measures},
   journal={Set-Valued Var. Anal.},
   volume={26},
   date={2018},
   number={3},
   pages={663--691},
   issn={1877-0533},
   review={\MR{3842187}},
   doi={10.1007/s11228-017-0414-y},
}

\bib{CM-LSSC17}{article}{
   author={Cavagnari, Giulia},
   author={Marigonda, Antonio},
   author={Piccoli, Benedetto},
   title={Superposition principle for differential inclusions},
   conference={
      title={Large-scale scientific computing},
   },
   book={
      series={Lecture Notes in Comput. Sci.},
      volume={10665},
      publisher={Springer, Cham},
   },
   date={2018},
   pages={201--209},
   review={\MR{3763531}},
}

\bib{CMQ}{article}{
   author = {Cavagnari, Giulia},
   author = {Marigonda, Antonio},
   author = {Quincampoix, Marc},
   title = {Compatibility of State Constraints and Dynamics for Multiagent Control Systems},
   status = {preprint},
}

\bib{DNS}{article}{
   author={Dolbeault, Jean},
   author={Nazaret, Bruno},
   author={Savar{\'e}, Giuseppe},
   title={A new class of transport distances between measures},
   journal={Calc. Var. Partial Differential Equations},
   volume={34},
   date={2009},
   number={2},
   pages={193--231},
   issn={0944-2669},
   review={\MR{2448650 (2009g:49019)}},
   doi={10.1007/s00526-008-0182-5},
}

\bib{DMR1}{article}{
   author = {Duprez, Michel},
   author = {Morancey, Morgan},
   author = {Rossi, Francesco},
   title = {Approximate and exact controllability of the continuity equation with a localized vector field},
   journal = {SIAM J. Control Optim.},
   volume = {57},
   date = {2019},
   number = {2}, 
   pages = {1284--1311},
   doi = {10.1137/17M1152917},
}

\bib{DMR2}{article}{
   author = {Duprez, Michel},
   author = {Morancey, Morgan},
   author = {Rossi, Francesco},
   title = {Minimal time for the continuity equation controlled by a localized perturbation of the velocity vector field},
journal = {Journal of Differential Equations},
volume = {269},
number = {1},
pages = {82 -- 124},
year = {2020},
issn = {0022-0396},
doi = {10.1016/j.jde.2019.11.098},
}

\bib{KF}{article}{
   author={Fan, Ky.},
   title={Fixed-point and minimax theorems in locally convex topological
   linear spaces},
   journal={Proc. Nat. Acad. Sci. U. S. A.},
   volume={38},
   date={1952},
   pages={121--126},
   issn={0027-8424},
   review={\MR{0047317}},
}

\bib{FK06}{book}{
   author={Feng, Jin},
   author={Kurtz, Thomas G.},
   title={Large deviations for stochastic processes},
   series={Mathematical Surveys and Monographs},
   volume={131},
   publisher={American Mathematical Society, Providence, RI},
   date={2006},
   pages={xii+410},
   isbn={978-0-8218-4145-7},
   isbn={0-8218-4145-9},
   review={\MR{2260560}},
   doi={10.1090/surv/131},
}

\bib{FN12}{article}{
   author={Feng, Jin},
   author={Nguyen, Truyen},
   title={Hamilton-Jacobi equations in space of measures associated with a
   system of conservation laws},
   language={English, with English and French summaries},
   journal={J. Math. Pures Appl. (9)},
   volume={97},
   date={2012},
   number={4},
   pages={318--390},
   issn={0021-7824},
   review={\MR{2899811}},
   doi={10.1016/j.matpur.2011.11.004},
}

\bib{FLOS}{article}{
   author={Fornasier, M.},
   author={Lisini, S.},
   author={Orrieri, C.},
   author={Savar\'{e}, G.},
   title={Mean-field optimal control as gamma-limit of finite agent
   controls},
   journal={European J. Appl. Math.},
   volume={30},
   date={2019},
   number={6},
   pages={1153--1186},
   issn={0956-7925},
   review={\MR{4028474}},
   doi={10.1017/s0956792519000044},
}

\bib{FPR}{article}{
   author = {Fornasier, Massimo},
   author = {Piccoli, Benedetto},
   author = {Rossi, Francesco}, 
   title = {Mean-field sparse optimal control},
   journal = {Philosophical Transactions of the Royal Society A: Mathematical, Physical and Engineering Sciences},
   volume = {372},
   date = {2014},
   doi = {10.1098/rsta.2013.0400},
}

\bib{GT2019}{article}{
   author = {Gangbo, Wilfrid},
   author = {Tudorascu, Adrian},
   title = {On differentiability in the Wasserstein space and well-posedness for Hamilton–Jacobi equations},
   journal = {Journal de Math\'ematiques Pures et Appliqu\'ees},
   volume = {125},
   pages = {119--174},
   year = {2019},
   issn = {0021-7824},
   doi = {10.1016/j.matpur.2018.09.003},
}

\bib{GS}{article}{
   author={Gangbo, Wilfrid},
   author={{\'S}wi{\polhk{e}}ch, Andrzej},
   title={Optimal transport and large number of particles},
   journal={Discrete Contin. Dyn. Syst.},
   volume={34},
   date={2014},
   number={4},
   pages={1397--1441},
   issn={1078-0947},
   review={\MR{3117847}},
   doi={10.3934/dcds.2014.34.1397},
}

%

\bib{JMQ}{article}{
   author={Jimenez, Chlo\'{e}},
   author={Marigonda, Antonio},
   author={Quincampoix, Marc},
   title={Optimal control of multiagent systems in the Wasserstein space},
   journal={Calc. Var. Partial Differential Equations},
   volume={59},
   date={2020},
   number={2},
   pages={Paper No. 58},
   issn={0944-2669},
   review={\MR{4073204}},
   doi={10.1007/s00526-020-1718-6},
}

\bib{KQ}{article}{
   author={Krastanov, Mikhail Ivanov},
   author={Quincampoix, Marc},
   title={Local small time controllability and attainability of a set for
   nonlinear control system},
   journal={ESAIM Control Optim. Calc. Var.},
   volume={6},
   date={2001},
   pages={499--516},
   issn={1292-8119},
   review={\MR{1849413 (2002f:93010)}},
   doi={10.1051/cocv:2001120},
}

\bib{LM}{article}{
   author={Le, Thuy T. T.},
   author={Marigonda, Antonio},
   title={Small-time local attainability for a class of control systems with
   state constraints},
   journal={ESAIM Control Optim. Calc. Var.},
   volume={23},
   date={2017},
   number={3},
   pages={1003--1021},
   issn={1292-8119},
   review={\MR{3660457}},
   doi={10.1051/cocv/2016022},
}

\bib{MR}{article}{
   author={Marigonda, Antonio},
   author={Rigo, Silvia},
   title={Controllability of some nonlinear systems with drift via generalized curvature properties},
   journal={SIAM J. Control and Optimization},
   volume={53},
   date={2015},
   number={1},
   pages={434--474},
}

\bib{MQ}{article}{
   author={Marigonda, Antonio},
   author={Quincampoix, Marc},
   title={Mayer control problem with probabilistic uncertainty on initial
   positions},
   journal={J. Differential Equations},
   volume={264},
   date={2018},
   number={5},
   pages={3212--3252},
   issn={0022-0396},
   review={\MR{3741388}},
   doi={10.1016/j.jde.2017.11.014},
}

\bib{orrieri}{article}{
  author={Orrieri, Carlo},
  title={Large deviations for interacting particle systems: joint mean-field and small-noise limit},
  journal={arXiv:1810.12636},
  year={2018},
}

%
%
%
%

\bib{Vil}{book}{
   author={Villani, C{\'e}dric},
   title={Topics in optimal transportation},
   series={Graduate Studies in Mathematics},
   volume={58},
   publisher={American Mathematical Society},
   place={Providence, RI},
   date={2003},
   pages={xvi+370},
   isbn={0-8218-3312-X},
   review={\MR{1964483 (2004e:90003)}},
   doi={10.1007/b12016},
}

\end{biblist}
\end{bibdiv}

\end{document}